\documentclass[11pt]{article}

\usepackage[top=1.0in, bottom=1.0in, left=1.0in, right=1.0in]{geometry}
\usepackage{amsfonts, amssymb, amsthm, amsmath}
\usepackage{booktabs, threeparttable, multirow}
\usepackage{graphicx}
\usepackage{xcolor}
\usepackage{titlesec} 
\usepackage{caption}[2007/12/23]
\usepackage{afterpage}
\usepackage{algorithm}
\usepackage{algpseudocode}
\usepackage{setspace} 
\usepackage{authblk} 
\usepackage{todonotes}
\usepackage{appendix}
\usepackage{enumerate}
\usepackage{tabularx}
\usepackage{comment}

\makeatletter
\newcommand{\multiline}[1]{%
  \begin{tabularx}{\dimexpr\linewidth-\ALG@thistlm}[t]{@{}X@{}}
    #1
  \end{tabularx}
}
\makeatother

\newcommand{\algorithmicelsif}{\textbf{else if}}
\algdef{C}[IF]{IF}{NoThenElseIf}[1]{\algorithmicelsif\ #1}
\usepackage{hyperref}
\hypersetup{
    colorlinks=true,        
    linkcolor=blue,         
    citecolor=blue,         
    urlcolor=blue,          
    pdfborder={0 0 0}       
}

\usepackage[backend=biber,style=apa]{biblatex} 
\DeclareLanguageMapping{english}{english-apa} 
\addbibresource{references.bib} 

\AtEveryBibitem{\clearfield{url}} 
\AtEveryBibitem{\clearfield{doi}} 

\newtheorem{definition}{Definition}
\newtheorem{property}{Property}
\newtheorem{proposition}{Proposition}
\newtheorem{theorem}{Theorem}
\newtheorem{lemma}{Lemma}
\newtheorem{corollary}{Corollary}

\makeatletter
\def\BState{\State\hskip-\ALG@thistlm}
\makeatother

\title{\textbf{\LARGE \textcolor{darkblue}{Optimizing Railcar Movements to Create Outbound Trains in a Freight Railyard}}}

\author[a]{Ruonan Zhao}
\author[a]{Joseph Geunes\thanks{Corresponding author.\newline Email: geunes@tamu.edu}}
\author[b,a]{Xiaofeng Nie}

\affil[a]{Wm Michael Barnes '64 Department of Industrial and Systems Engineering, 
Texas A\&M University}
\affil[b]{Department of Engineering Technology and Industrial Distribution, 
Texas A\&M University}

\date{}

\definecolor{darkblue}{RGB}{0, 51, 102} 

\titleformat{\section}
  {\color{darkblue}\normalfont\Large\bfseries} 
  {\thesection}{1em}{} 

\titleformat{\subsection}
  {\color{darkblue}\normalfont\large\bfseries} 
  {\thesubsection}{1em}{} 

\titleformat{\subsubsection}
  {\color{darkblue}\normalfont\normalsize\bfseries} 
  {\thesubsubsection}{1em}{} 

\begin{document}
\maketitle

\begin{abstract}
A typical freight railyard at a manufacturing facility contains multiple tracks used for storage, classification, and outbound train assembly. Individual railcar storage locations on classification tracks are often determined before knowledge of their destination locations is known, giving rise to railcar shunting or switching problems, which require retrieving subsets of cars distributed throughout the yard to assemble outbound trains. To address this combinatorially challenging problem class, we propose a large-scale mixed-integer programming model that tracks railcar movements and corresponding costs over a finite planning horizon. The model permits simultaneous movement of multiple car groups via a locomotive and seeks to minimize repositioning costs. We also provide a dynamic programming formulation of the problem, demonstrate the $\mathcal{NP}$-hardness of the corresponding optimization problem, and present an adaptive railcar grouping dynamic programming (ARG-DP) heuristic, which groups railcars with common destinations for efficient moves. Average results from a series of numerical experiments demonstrate the efficiency and quality of the ARG-DP algorithm in both simulated yards and a real yard. On average, across 60 test cases of simulated yards, the ARG-DP algorithm obtains solutions 355 times faster than solving the mixed-integer programming model using a commercial solver, while finding an optimal solution in $60\%$ of the instances and maintaining an average optimality gap of 6.65\%. In 10 cases based on the Gaia railyard in Portugal, the ARG-DP algorithm achieves solutions 229 times faster on average, finding an optimal solution in $50\%$ of the instances with an average optimality gap of 6.90\%.

Keywords: Railcar Switching, Mixed-Integer Programming, Dynamic Programming, $\mathcal{NP}$-Hardness, Heuristics
\end{abstract} \hspace{10pt}

\affil[*]{Corresponding author: Joseph Geunes. Email: joseph@tamu.edu}

\section{Introduction}
Rail transport plays a significant role in various industries, including manufacturing, agriculture, and logistics, offering a cost-effective and environmentally sustainable alternative to road transport. According to the \textcite{Railroads2020overview}, total U.S.\ freight shipments are expected to increase from approximately 18.6 billion tons in 2018 to 24.1 billion tons by 2040 as forecasted by the U.S. Federal Highway Administration, representing a 30\% increase.  Consequently, efficient movement of freight cars within a transportation network is vital for optimizing operational costs, minimizing transit times, and ensuring the smooth operation of supply chains. 

The general layout of a railyard includes receiving yards, classification yards, and departure yards. Receiving yards and departure yards function as places to receive inbound trains and dispatch outbound trains, respectively. Classification yards play an important role in connecting receiving yards and departure yards by assisting in sorting cars based on their destinations. Within each yard, the tracks can be of two types: one-sided available or two-sided available. One-sided tracks can be considered as stack structures, where railcars are added and removed from only one end, leading to a last-in-first-out (LIFO) retrieval order. In contrast, two-sided tracks function like queue structures, allowing railcars to be added from one end and removed from the opposite end, following a first-in-first-out (FIFO) order \parencite{knuth1968}. In this paper, we focus on one-sided (LIFO) systems, more commonly arising in flat railyards connected to manufacturing facilities, which receive inbound goods from suppliers and assemble outbound trains for delivery to customers.

Freight yards are classified into three main types: flat yards, gravity yards, and hump yards, each utilizing different methods for sorting and organizing railcars. In contrast to gravity yards and hump yards, flat yards do not feature a hump and rely only on locomotives to move railcars. The detailed process of using a locomotive to move railcars is illustrated in Table \ref{tab:steps}. This entire sequence, comprising Steps 1 through 3, is referred to as a shunting move. 

\begin{table}[h!]
\centering
\caption{Steps involved in a shunting move.}\label{tab:steps}
\begin{tabular}{|c|l|}
\hline
Step & Activity \\
\hline
1 & A locomotive is attached to a target individual railcar or a set of railcars, \\
& which are then decoupled from the rest of the train \\
2 & The locomotive actively pulls the railcars out via a ladder track\footnotemark and pushes \\
& them onto some other designated track(s)\\
3 &  The locomotive is decoupled from the railcars, making it available for the \\
& next shunting move\\
\hline
\end{tabular}
\end{table}
\footnotetext{A ladder track is a segment of track that leads to a sequence of multiple classification or departure tracks.}

A crucial shunting planning task in such railyards lies in determining a sequence of railcar moves via a locomotive in order to assemble trains destined for designated locations as efficiently as possible. In current practice, yard planners often employ a manual process in which they plan out sequences of moves based on knowledge gained from experience in solving such problems. This reliance on manual planning not only consumes significant time and resources but also limits the potential for optimizing operations costs in complex and dynamic railyard environments. Additionally, the arrangement of cars across multiple tracks results in a combinatorial explosion of possible movements, further complicating the shunting process. 

To address these challenges, this paper undertakes a comprehensive exploration of a particular type of shunting problem that we refer to as the railcar shunting problem (RSP), beginning with a mathematical definition of the problem. We propose two exact models with an objective of minimizing the shunting cost incurred in relocating specific railcars to their designated outbound tracks. First, we propose a mixed-integer programming (MIP) model that treats sets of consecutive railcars sharing the same destination as unified car groups, ensuring that these groups are moved together without being split. This model also facilitates the simultaneous movement of multiple such car groups, thereby minimizing repositioning costs and enhancing operational efficiency. Second, we introduce an alternative formulation employing a dynamic programming (DP) method, which forms the basis for our heuristic methodology. By constructing a so-called conflict graph associated with the rail network, we establish the $\mathcal{NP}$-completeness of the recognition version of the RSP. We then develop an adaptive railcar grouping dynamic programming (ARG-DP) heuristic algorithm, which contains three key steps: preprocessing, graph construction, and DP. 

The MIP model proposed in this paper has various applications in practice. For example, in container terminals, cranes organize containers in storage blocks that form vertical stacks, typically following a LIFO pattern as described by \textcite{containerstack}. When specific containers must be retrieved to load onto a vessel or truck or for maintenance, the crane operator must determine the sequence of moves, which corresponds to the unloading problem discussed by \textcite{lehnfeld2014loading}. Another practical application occurs in the steel industry, where steel slabs must be retrieved from a slab yard according to a scheduled sequence \parencite{steelshunting}. 

The RSP may arise as a subproblem within larger railway optimization tasks. For example, in integrated yard management, railcars from multiple inbound trains must be reorganized to form outbound trains on time \parencite{ALIAKBARI2024950}. When the RSP arises as a subproblem within a larger railway planning problem, the ability to efficiently solve this subproblem—which is the primary focus of this paper—becomes essential for addressing the overall problem within acceptable computational time.

The contributions of this paper are as follows.

\begin{itemize}
    \item \textbf{Development of a Comprehensive Planning Framework}: We develop a comprehensive planning framework to model the RSP, addressing the complexities inherent in freight railyard operations for a manufacturer, with an objective of minimizing total shunting cost.
    
    \item \textbf{MIP and DP Models}: We provide a general MIP model that enables the simultaneous movement of any number of consecutive railcars with the objective of optimizing shunting costs. We also propose a dynamic programming method tailored to the RSP, characterizing the states, actions, and Bellman equation for this problem.
    
    \item \textbf{Proof of $\mathcal{NP}$-Completeness}: We prove the $\mathcal{NP}$-completeness of the recognition version of the RSP by relating it to the stack sorting problem as described by \textcite{KLstacksorting}.  To the best of our knowledge, this is the first characterization of the complexity of block relocation and related stack sorting problems with batch moves (see., e.g., \cite{zhang2016tree}, for a definition of the block relocation problem with batch moves).
    
    \item \textbf{ARG-DP Heuristic Algorithm}: We develop a heuristic that efficiently solves the RSP by approximately solving the DP formulation.
    
    
    \item \textbf{Numerical Study}: An extensive numerical study demonstrates the efficiency and quality of the proposed ARG-DP heuristic, showcasing its ability to achieve solutions more than 355 times faster than the MIP model, while finding an optimal solution in $60\%$ of the problem instances tested, and maintaining an average optimality gap of 6.65\% for 60 cases of simulated yards. Additionally, in 10 cases based on the Gaia yard structure in Portugal, discussed in \textcite{Gaiayard}, the ARG-DP algorithm achieves solutions 229 times faster on average, while finding an optimal solution for half of the problem instances tested, with an average optimality gap of 6.90\%.
\end{itemize}

The remainder of this paper is organized as follows. In Section \hyperref[sec:lr]{2}, we discuss closely related literature. Section \hyperref[sec:formulation]{3} provides a formal definition of the RSP. In Section \hyperref[sec:model]{4}, we introduce MIP and DP models for the RSP. This is followed by a proof of $\mathcal{NP}$-completeness in Section \hyperref[sec:complexity]{5}. In Section \hyperref[sec:ARG]{6}, we present the ARG-DP heuristic algorithm. Section \hyperref[sec:results]{7} presents computational results from applying our solution methods to solve the RSP both in  simulated yards and using the structure of the Gaia yard in Portugal, detailing its computational results and efficiency. Section \hyperref[sec:conclusion]{8} concludes the paper and provides recommendations for future research.

\section{Literature Review}
\label{sec:lr}
Shunting operations have been extensively studied by researchers and are sometimes referred to as sorting, marshalling, or switching in the literature. Additionally, railcars may be referred to as cars, trams, or other related terms in various studies. Tracks accessible from only one side can be considered analogous to stacks, as they share the  LIFO property. The literature on stack sorting includes works of \textcite{knuth1968}, who first introduced the idea of stack sorting using railway language and characterized permutations that can be sorted using $k$ stacks in series. \textcite{tan} extends these ideas to sorting with acyclic networks of stacks and queues. \textcite{lehnfeld2014loading} provide a nice overview of past work in the area of stack loading and unloading, highlighting its application in various contexts including shipping, rail, and steel stacking.  We next discuss related literature in rail contexts under three objectives: optimizing the number of classification tracks, reducing the number of shunting movements, and minimizing shunting cost or time.

One prominent research stream focuses on minimizing the number of classification tracks required—an issue closely related to managing shunting operations that transfer railcars from inbound tracks to classification tracks. \textcite{trainmarshalling} introduce the train marshalling problem, wherein an incoming train arrives in a specific order, and its cars have different destinations. The objective is to rearrange the cars onto $k$ auxiliary tracks while minimizing the number of auxiliary tracks $k$. Upon completion, all cars from one auxiliary track are positioned at the beginning of the rearranged train, followed by the cars from another track, and so forth. Their study primarily establishes the $\mathcal{NP}$-completeness of the train marshalling problem and subsequently provides an upper bound on the required number of auxiliary tracks. \textcite{RollingStock} present an algorithm to address shunting operations in an actual hump yard, with the same goal of minimizing the number of classification tracks required.

Another key focus of the literature is on optimizing the number of shunting movements. \textcite{schduleinthemorning} introduce a problem of ``scheduling trams in the morning,'' which involves a set of tram cars of different types arranged on separate tracks and a specified departure order for trams of particular types. The objective is to determine whether an assignment sequence of trams to the departure track exists that allows all trams to depart in the required order without any shunting movements. They address the problem using a DP method. \textcite{handbook2018} provide an overview of freight shunting from two perspectives: single-stage classification and multi-stage classification. The key difference is that multi-stage classification permits the movement of railcars between classification tracks, whereas single-stage classification does not. Furthermore, single-stage classification aims to minimize the number of classification tracks, while multi-stage classification focuses on minimizing the number of shunting movements. \textcite{PSPACECompleteStacking} propose a greedy approach for minimizing the number of movements required to relocate items to target tracks, while not allowing movements of multiple cars at the same time. 

Additional studies focus on minimizing shunting cost or distance, which are more closely related to our work.  \textcite{Lubbecke2005} analyze the allocation of railcars within in-plant rail networks. They group neighboring tracks into regions and address two primary sub-problems: determining how many railcars of each type should be supplied from each region for each request, and retrieving the requested railcars by region. The second sub-problem involves shunting operations, but their approach only considers a single outbound track and assumes that retrieval costs increase linearly with the position (depth) of the railcar on that track. In other words, retrieving a car from the sixth position is six times more expensive than retrieving a car from the first position. Furthermore, they do not consider railcar movements between different classification tracks, focusing solely on transfers from a classification track to the outbound track. \textcite{singleengine2015} examine the railcar movement problem using a single locomotive. They present an MIP model that minimizes locomotive travel distance while assembling a train composed of railcars destined for the same location. Their approach assumes that all railcars of interest on any given track are ordered consecutively and are not scattered throughout the track. 
In our work, we do not make this assumption; instead, we consider railcar groups, where groups with different destinations may be arranged arbitrarily along the tracks. Moreover, while their model produces a single train for a given destination, our model offers the flexibility to form multiple trains, depending on the number of destination tracks available—for example, forming four trains if there are four destination tracks. 

\textcite{flatyardRetrieving} consider the retrieval of a specified number of railcars of certain types with the objective of minimizing retrieval costs, which appears to be the most closely related work to ours in the literature to the best of our knowledge. They formulate the problem as an MIP model. In their approach, retrieval costs depend only on whether a car is at the head of a track, and they assume these costs are identical across all classification tracks. In contrast, our model does not make such assumptions; our operational costs per shunting move depend on the distance between tracks and vary accordingly. Furthermore, their model considers only a single outbound track, whereas our model offers the flexibility of multiple outbound tracks. Additionally, they do not account for the sequence of railcar movements, instead treating the problem as a supply-demand scenario aimed at extracting the total required number of specific types of cars. However, our model not only considers the extraction of specific railcars but also the sequence in which railcars are extracted, thereby minimizing the total operational cost.

A related body of work considers the so-called \emph{block relocation} problem, in which a subset of items must be retrieved from a set of stacks of items (see, e.g., \cite{caserta2012mathematical}).  In the standard block relocation problem, items can only be removed from the top of a stack, and are either moved to the top of another stack or removed from the top of a stack. A recent survey \parencite{lersteau2022survey} characterizes work in this area and its relation to railcar sorting problems. As with many stack sorting problem classes, the block relocation problem assumes that items may be moved from the top of a stack one-at-a-time. A notable exception can be found in \textcite{zhang2016tree}, who define the block relocation problem with batch moves, where a group of items may be moved from the top of a stack. As noted by \textcite{lu2020study}, research on the version of the problem with batch moves is extremely limited and provides a promising area for further analysis.  As we later show, we contribute to this stream of literature by demonstrating the $\mathcal{NP}$-hardness of the block relocation problem and other stack sorting problems with batch moves.

Although extensive research has been conducted on shunting operations, the specific movement of freight railcars from classification tracks to departure tracks by locomotives within a typical yard has received limited attention. This issue remains an open question, as noted by \textcite{shuntingreview}. To the best of our knowledge, no exact shunting approaches aiming to minimize the cost of moving items to target tracks and allowing for multiple simultaneous car movements have been suggested, and this is the first time an exact MIP model has been constructed. The power of this model is that it offers considerable versatility, enabling the movement of any number of cars simultaneously between classification tracks or from classification tracks to departure tracks.

\section{Problem Definition}
\label{sec:formulation}
This section describes the RSP in detail. The goal of the RSP is to assist decision-makers in determining an optimal sequence of railcar movements within a single railyard, utilizing a locomotive for all relocations. A railyard comprises various railcars, some assigned to specific departure tracks and others without designated destinations. The layout of a flat railyard includes ladder tracks, classification tracks, and departure tracks (see Figure \ref{systemexample}). 

We focus on one-sided available tracks, where railcars can only be accessed from the \textit{switch end} of each track, while the opposite side is known as the \textit{dead end}. The switch end connects to the ladder tracks, facilitating movement between tracks. Within the railyard, railcars are distributed across classification tracks; those with assigned destinations need to be moved to their respective departure tracks. The operational cost per shunting move is determined by the distance the locomotive travels between track segments and our approach will assume that this cost is fixed for any pair of tracks and independent of the number of cars moved between the tracks. Additionally, we assume that only one locomotive operates at a time within the railyard.

Figure \ref{systemexample} illustrates a sample railyard layout with three departure tracks, three classification tracks, two ladder tracks, and eight railcars. In the figure, railcars are depicted as rectangles, with the black railcar representing the locomotive. Classification tracks are denoted by black horizontal lines, while departure tracks are color-coded (green, blue, and purple) to indicate different destinations. Additionally, each railcar is also color-coded to correspond with its designated departure track. White railcars, however, do not have destinations, and can be assigned or moved to any of the classification tracks. In this example, tracks are interconnected on the left side by ladder tracks, allowing railcars to be accessed exclusively from this switch end and subsequently moved to any other track as needed.

In a typical freight railyard, numerous railcars are positioned on classification tracks. Instead of managing individual railcars, we designate multiple adjacent railcars with the same departure track as a unified railcar \emph{group}, which will remain coupled throughout shunting operations. In the example illustrated in Figure \ref{systemexample}, the four green railcars on the higher track are grouped together as a single group. Similarly, the four different color-coded railcars on the track below are considered as four separate groups, each consisting of only one railcar.

\begin{figure}[htbp!]
\begin{center}
\includegraphics[scale=0.18]{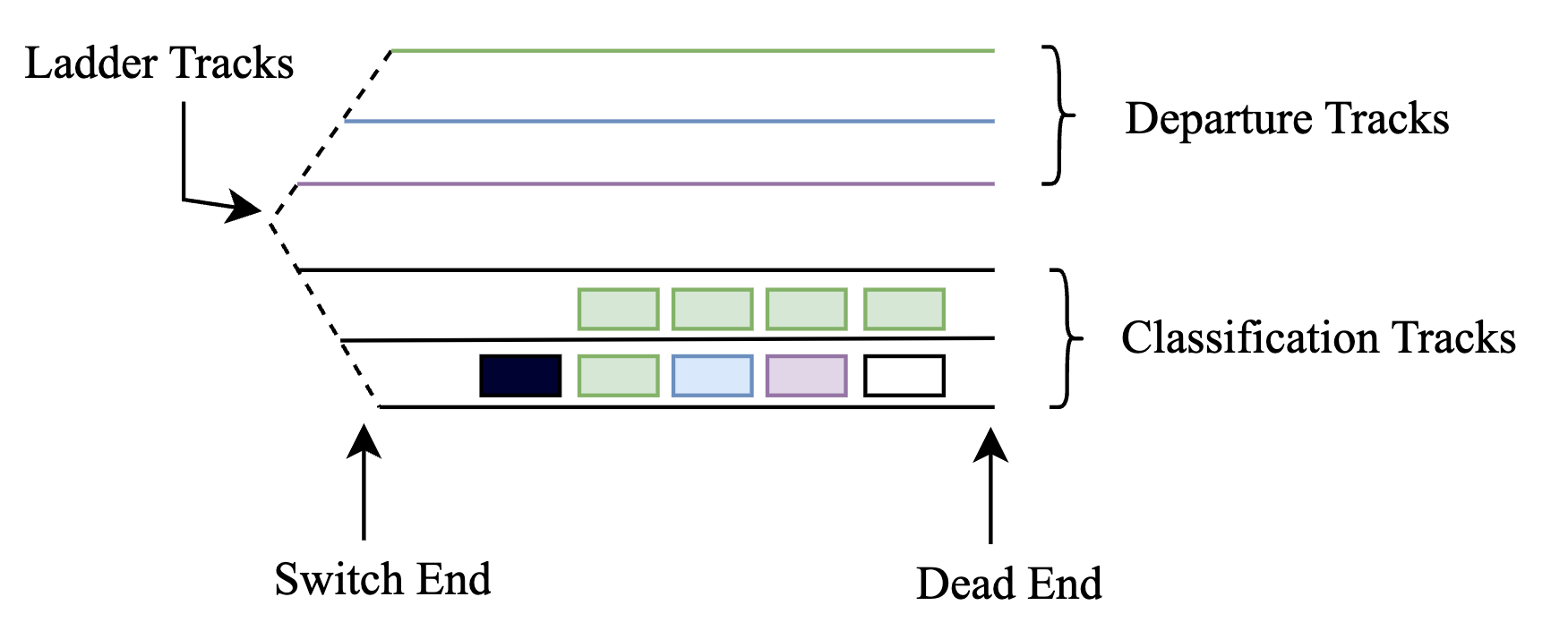}
\caption{Railyard layout example.} \label{systemexample}
\end{center}
\end{figure}

\noindent Building upon the above operational framework, the RSP can be defined as follows.

\begin{definition}[\textbf{RSP}]
    We are given a set of track segments \( K \), partitioned into classification tracks \( K_C \) and departure tracks \( K_D \), and a set of groups \( G \), where each group \( g \in G \) consists of a connected sequence of railcars with length \( L_g \) and the same destination segment \( d(g) \). The set \( G \) is divided into \( G_M \), representing groups assigned to specific departure tracks in \( K_D \), and \( G_N \), representing groups without designated destinations and which may be placed on any classification track in \( K_C \). Each group is mapped to a specific classification track via a function \( f: G \rightarrow K_C \) that designates its initial position on its initial track. The RSP requires determining a sequence of shunting operations that relocates all car groups to their respective destination track segments (in any order) while minimizing the total operational cost. 
\end{definition}

\section{RSP Model Formulations}
\label{sec:model}
As previously discussed, the RSP seeks a shunting sequence that relocates all car groups to their respective destination tracks while minimizing total operational cost. To achieve this goal, we present two exact formulations: an MIP model and a DP model in the following two subsections.

\subsection{MIP model of the RSP}
In this subsection, we describe the MIP model. The required sets and parameters for the MIP model are detailed in Tables \ref{tab:sets} and \ref{tab:parameters}, respectively. Our modeling approach divides time into a set $T$ of discrete time periods, assuming that any move of a subset of cars from one track to another requires one time period. Let $\mathcal{T} = |T|$. The objective of the MIP model is to minimize the cost of relocating all groups to their respective destination tracks over a planning horizon of 
$\mathcal{T}$ periods, indexed by $t$. The choice of an appropriate value of the parameter $\mathcal{T}$ is discussed in Section \ref{sec:Tvalue}.

\begin{table}[htbp]
    \centering
    \footnotesize
    \caption{Set notation summary}
    \label{tab:sets}
    \begin{tabular}{@{}p{1cm} p{12cm}@{}}
        \toprule
        $K$ & Set of track segments, indexed by $i$ and $j$ \\[0.5em]
        $K_C$ & Set of classification track segments \\[0.5em]
        $K_D$ & Set of departure track segments \\[0.5em]
        $G$ & Set of car groups on the tracks, indexed by $g$ \\[0.5em]
        $G_M$ & Set of car groups that have a destination track segment in $K_D$ \\[0.5em]
        $G_N$ & Set of car groups that have a destination track segment in any track in $K_C$ \\[0.5em]
        $T$ & Set of time periods, indexed by $t$ \\[0.5em]
        \bottomrule
    \end{tabular}
\end{table}

\begin{table}[htbp]
    \centering
    \footnotesize
    \caption{Parameter notation summary}
    \label{tab:parameters}
    \begin{tabular}{@{}p{1 cm} p{12cm}@{}}
        \toprule
        $d(g)$ & Destination track segment of car group $g$, $\forall g \in G$ \\[0.5em]
        $L_g$ & Length of car group $g$, $\forall g \in G$ \\[0.5em]
        $\mathcal{L}_i$ & Length of track segment $i$, $\forall i \in K$ \\[0.5em]
        $c_{ij}$ & Cost of moving a car group from track $i$ to $j$, $\forall i,j \in K, j \neq i$ \\[0.5em]
        \bottomrule
    \end{tabular}
\end{table}

We consider a set of track segments $K$ consisting of two disjoint subsets: the classification track segment $K_C$ and the departure track segments $K_D$, where $K_C \cap K_D =\emptyset$ and $K_C \cup K_D=K$. Let $G$ denote the set of groups on the tracks. Among these, $G_M$ is the subset consisting of railcar groups that have an outbound destination track segment in $K_D$, while $G_N$ is the subset of railcar groups without a destination track segment, and thus may be placed on any classification track segment in $K_C$. Since different groups may consist of varying numbers of railcars, we define $L_g$ as the length of group $g$ for all $g \in G$. For each railcar group $g \in G_M$, let $d(g)$ denote its destination track, which is a single track in the set $K_D$. In contrast, for $g \in G_N$, $d(g)$ corresponds to any element of $K_C$. We define the position of group $g$ on its track segment in period $t$ as $P_{gt}$, characterized by its relative distance to the dead end. The group in position 1 is the group closest to the dead end, while the highest position indexed group on a track segment is closest to the switch end. Letting $\mathcal{G} = |G|$, at most $\mathcal{G}$ positions may be occupied on any track segment. To capture how each group's position changes over time, we define $\Delta_{gt}$ as the change in position of group $g$ in period $t$, which is unrestricted in sign. We further introduce two nonnegative variables $\Delta_{gt}^+$ and $\Delta_{gt}^-$ such that $\Delta_{gt}^+=\max\{\Delta_{gt}, 0\}$ and $\Delta_{gt}^-=\max\{-\Delta_{gt},0\}$, so that 
 $\Delta_{gt}=\Delta_{gt}^+-\Delta_{gt}^-$. The parameter \( c_{ij} \) represents the fixed cost for moving a set of car groups from track \( i \) to \( j \).

The model's decision variables are provided in Table \ref{tab:summarydv}. The binary variable $x_{git}$ equals 1 if group  $g$ is located on track segment $i$ at the end of period $t$, and 0 otherwise. We assume that each shunting move requires one time period, and the binary variable $v_{ijt}$ indicates whether the locomotive moves a subset of groups from track segment $i$ to track segment $j$ in period $t$, for all $i,j \in K, i \ne j$ and $t \in T$. The binary variable $z_{gt}$ equals 1 if group $g$ changes track segments in period $t$, and 0 otherwise, while $y_{gijt}=1$ if group $g$ moves from segment $i$ to segment $j$ in period $t$, and 0 otherwise. The nonnegative variable $N_{it}$ represents the number of groups on track segment $i$ at the end of period $t$. To capture the dynamic status of the tracks, we introduce the nonnegative variables  $N_{it}^+$  and  $N_{it}^-$, which denote the number of groups added to and removed from segment $i$ during period $t$, respectively.

\begin{table}[htbp]
    \centering 
    \footnotesize 
    \caption{Decision variable notation summary}
    \label{tab:summarydv}
    \begin{tabular}{ll}
        \toprule
        $x_{git}$: & $\left\{
                    \begin{tabular}{@{}l@{}}
                    1 \text{ if group $g$ is on track segment $i$ at the end of period $t$}, $\forall g \in G, i \in K, t \in T$ \\
                    0 \text{ otherwise}
                    \end{tabular}
                    \right.$ \\ 
        $y_{gijt}$: & $\left\{
                    \begin{tabular}{@{}l@{}}
                    1 \text{  if group $g$ moves from track segment $i$ to track segment $j$ in period $t$}, $\forall g\in G, i, j \in K$, $j \neq i$, $t \in T$\\
                    0 \text{ otherwise}
                    \end{tabular}
                    \right.$ \\ 
        $z_{gt}$: & $\left\{
                    \begin{tabular}{@{}l@{}}
                    1 \text{  if group $g$ changes track segments in period $t$}, $\forall g\in G, t \in T$\\
                    0 \text{ otherwise}
                    \end{tabular}
                    \right.$ \\ 
        $v_{ijt}$: & $\left\{
                    \begin{tabular}{@{}l@{}}
                    1 \text{  if the locomotive moves a subset of groups from track segment $i$ to track segment $j$ in period $t$, } \\ \hspace{0.3cm} $\forall i, j \in K, j \neq i, t \in T$ \\
                    0 \text{ otherwise}
                    \end{tabular}
                    \right.$ \\ 
        $w_{t}$: & $\left\{
                    \begin{tabular}{@{}l@{}}
                    1 \text{  if all groups are in their destination segments at time $t$}, $\forall t \in T$\\
                    0 \text{ otherwise}
                    \end{tabular}
                    \right.$ \\ 
        $u_{t}$: & $\left\{
                    \begin{tabular}{@{}l@{}}
                    1 \text{  if period $t$ is the earliest period at which all groups have reached their destination tracks}, $\forall t \in T$\\
                    0 \text{ otherwise}
                    \end{tabular}
                    \right.$ \\ 
        $N_{it}$: & $
                      \begin{tabular}{@{}l@{}}
                        number of groups on track segment $i$ at the end of period $t$, $\forall i \in K, t \in T$
                      \end{tabular}
                    $ \\  
        $N_{it}^+$: & $
                    \begin{tabular}{@{}l@{}}
                    number of groups added to segment $i$ in period $t$, $\forall i \in K, t \in T$
                    \end{tabular}
                    $ \\ 
        
        $N_{it}^-$: & $
                    \begin{tabular}{@{}l@{}}
                    number of groups removed from segment $i$ in period $t$, $\forall i \in K, t \in T$
                    \end{tabular}
                    $ \\ 

        $\Delta_{gt}$: & $
                    \begin{tabular}{@{}l@{}}
                    change in position of group $g$ in period $t$, $\forall g \in G, t \in T$
                    \end{tabular}
                    $ \\ 
        
        $\Delta_{gt}^+$: & $
                    \begin{tabular}{@{}l@{}}
                    positive change in position of group $g$ in period $t$, $\forall g \in G, t \in T$
                    \end{tabular}
                    $ \\ 
        
        $\Delta_{gt}^-$: & $
                    \begin{tabular}{@{}l@{}}
                    negative change in position of group $g$ in period $t$, $\forall g \in G, t \in T$
                    \end{tabular}
                    $ \\ 
        
        $P_{gt}$: & $
                    \begin{tabular}{@{}l@{}}
                    position of group $g$ on its track segment in period $t$, $\forall g \in G, t \in T$
                    \end{tabular}
                    $ \\
        
        \bottomrule
    \end{tabular}
\end{table}

Given the above definitions, our MIP formulation of the RSP is shown in Table \ref{tab:RSP}.
\begin{table}[htbp]
    \centering 
    \footnotesize 
    \caption{RSP MIP model}
    \label{tab:RSP}
\noindent\rule{\textwidth}{0.4pt} %
\footnotesize
\begin{eqnarray}
 \mbox{Minimize} & \sum\limits_{i\in K}\sum\limits_{\overset{l \in K}{j\ne i}}\sum\limits_{t\in T}c_{ij}v_{ijt}, \label{eq:obj} \\
\mbox{Subject to:} &  \sum\limits_{i\in K} x_{git}=1, &  g \in G, t \in T, \label{eq:two}\\
& \sum\limits_{g \in G} L_g x_{git} \le \mathcal{L}_i, & i \in K, t \in T,\label{eq:3}\\
& \sum\limits_{i \in K}\sum\limits_{\overset{j \in K}{j\ne i}} v_{ijt} \le 1, & t \in T,\label{eq:4}\\
& y_{gijt} \le v_{ijt}, & g \in G, i, j \in K, j \ne i, t \in T,\label{eq:5}\\
& y_{gijt} \le x_{gi,t-1}, & g \in G, i, j \in K, j \ne i, t \in T,\label{eq:6}\\
& N_{it} = N_{i,t-1} - N_{it}^- + N_{it}^+, &   i \in K, t \in T, \label{eq:7}\\
& N_{it}^- = \sum\limits_{g \in G}\sum\limits_{\overset{j \in K}{j\ne i}} y_{gijt}, & i \in K, t \in T,\label{eq:8}\\
& N_{it}^+ = \sum\limits_{g \in G} \sum\limits_{\overset{j \in K}{j\ne i}} y_{gjit}, & i \in K, t \in T,\label{eq:9}\\
& \sum\limits_{i\in K} N_{it}^+ = \sum\limits_{i\in K} N_{it}^- , & t \in T,\label{eq:10}\\
& \Delta_{gt}^+ \le \mathcal{G} \sum\limits_{i \in K}\sum\limits_{\overset{j \in K}{j\ne i}} y_{gijt}, & g \in G, t \in T,\label{eq:11}\\
& \Delta_{gt}^- \le \mathcal{G} \sum\limits_{i \in K}\sum\limits_{\overset{j \in K}{j\ne i}} y_{gijt}, & g \in G, t \in T,\label{eq:12}\\
& \Delta_{gt}^+-\Delta_{gt}^- \ge N_{j,t-1} - N_{i,t-1} + N_{it}^- -M_1 \left(1-y_{gijt}\right), &  g \in G, t\in T, j, i \in K, i \ne j, \label{eq:13}\\
& \Delta_{gt}^+-\Delta_{gt}^- \le N_{j,t-1} - N_{i,t-1} + N_{it}^- + M_1 \left(1-y_{gijt}\right), & g \in G, t\in T, j, i \in K, i \ne j,  \label{eq:14}\\
& P_{gt} = P_{g,t-1} + \Delta_{gt}^+-\Delta_{gt}^- , & g \in G, t \in T,\label{eq:15}\\
& \mathcal{G}y_{gijt} \ge P_{g,t-1} - (N_{i,t-1}-N_{it}^-) - M_2(2 - x_{gi,t-1}-v_{ijt}), & g \in G, i,j \in K, j \ne i, t \in T,\label{eq:16}\\
& \mathcal{G}(1 - y_{gijt}) \ge N_{i,t-1}-N_{it}^- - P_{g,t-1}+1 - M_2(2-x_{gi,t-1}-v_{ijt}), & g \in G, i,j \in K, j \ne i, t \in T,\label{eq:17}\\
& x_{git}\le x_{gi,t-1}+\sum\limits_{\overset{j \in K}{j\ne i}}y_{gjit}, & g \in G, i \in K, t\in T, \label{eq:18}\\
& x_{git}\ge x_{gi,t-1}+2\sum\limits_{\overset{j \in K}{j\ne i}}y_{gjit}-1, & g \in G, i \in K, t\in T,\label{eq:19} \\
& x_{git}\le x_{gi,t-1}-\sum\limits_{\overset{j \in K}{j\ne i}}y_{gijt}+\sum\limits_{\overset{j \in K}{j\ne i}}y_{gjit}, & g \in G, i \in K, t\in T, \label{eq:20}\\
& x_{git}\ge x_{gi,t-1}-\sum\limits_{\overset{j \in K}{j\ne i}}y_{gijt}, & g \in G, i \in K, t\in T,\label{eq:21} \\
& w_{t} \le x_{g,d(g),t}, & t \in T, g \in G_M, \label{eq:22}\\
& w_{t} \le \sum\limits_{i \in K_C} x_{git}, & t \in T, g \in G_N, \label{eq:23} \\
& u_t \le w_t - w_{t-1}, & t \in T, \label{eq:24}\\
& \sum\limits_{t\in T} u_t = 1,\label{eq:25} \\
& \sum\limits_{i\in K}\sum\limits_{\overset{j \in K}{j\ne i}} v_{ij1} \geq \sum\limits_{i\in K}\sum\limits_{\overset{j \in K}{j\ne i}} v_{ij2} \geq \cdots \geq \sum\limits_{i\in K}\sum\limits_{\overset{j \in K}{j\ne i}} v_{ijT}, \label{eq:26}\\
& x_{git}, v_{ijt}, y_{gijt}, w_t, u_t \in \{0, 1\}, & g \in G, i,j \in K, j \ne i, t \in T, \label{eq:27}\\
& N_{it}, N_{it}^+, N_{it}^-, \Delta_{gt}^+,\Delta_{gt}^- , P_{gt} \in \mathbb{Z}^+, &  i \in K, t \in T, g \in G.\label{eq:28}
\end{eqnarray}
\noindent\rule{\textwidth}{0.4pt} %
\end{table}
The objective \eqref{eq:obj} is to minimize the total shunting cost. Constraint set \eqref{eq:two} ensures that every group must occupy exactly one track segment in each period. To obey track length restrictions, we enforce constraint set \eqref{eq:3}. Constraint set \eqref{eq:4} ensures that at most one group can move from one track segment to another in any time period. We also require constraint sets \eqref{eq:5} and \eqref{eq:6} to ensure that a group $g$ cannot move from track segment $i$ to $j$ unless the locomotive moves from $i$ to $j$ in period $t$, and group $g$ was located on track segment $i$ in period $t-1$. Constraint set \eqref{eq:5} also ensures that groups from at most one track segment may change track segments in any period. 

Maintaining track segment group balance and ensuring the same number of groups leave one track and enter another is enforced by constraint sets \eqref{eq:7} -- \eqref{eq:10}. Constraint set \eqref{eq:7} connects the number of groups on track segment $i$ at the end of period $t$ to the number of groups on the same track segment at the end of period $t-1$, by adding the number of groups that arrive on track $i$ and subtracting the number of groups that depart from it. Constraint sets \eqref{eq:8} and \eqref{eq:9} represent, respectively, the number of groups added to track segment $i$ in period $t$, and the number of groups removed from track segment $i$ in period $t$. Constraint set \eqref{eq:10} ensures that in every time period $t$, the total number of groups added across all track segments equals the total number of groups removed from other track segments.

Constraint sets \eqref{eq:11} -- \eqref{eq:15} enforce the proper position changes for a group, given values of the variables $N_{it}^-$ and $N_{jt}^+$. We assume that if $N_{it}^-$ groups are removed from track segment $i$ in period $t$, then these must correspond to the $N_{it}^-$ groups on segment $i$ in period $t$ that are closest to the switch end; these are, therefore, the groups on segment $i$ at time $t$ with the $N_{it}^-$ highest position numbers. Note that $\Delta_{gt} = \Delta_{gt}^+ - \Delta_{gt}^-$ is expressed as the difference of two nonnegative variables, $\Delta_{gt}^+$ and $\Delta_{gt}^-$, which indicate the position change of group $g$ in period $t$. Constraint sets \eqref{eq:11} and \eqref{eq:12} ensure that if group $g$ does not undergo a move in period $t$, its position cannot change. In particular, If $z_{gt} = 0$, these constraints force both $\Delta_{gt}^+$ and $\Delta_{gt}^-$ to 0; if $z_{gt} = 1$, then $\Delta_{gt}^+$ and $\Delta_{gt}^-$ may take positive values up to $\mathcal{G}$. Because the binary variable $z_{gt}=1$ if group $g$ changes track segments in period $t$, and
\begin{eqnarray}
z_{gt} &= \sum\limits_{i \in K}\sum\limits_{j \in K, j \ne i} y_{gijt}, & \forall g \in G, t \in T. \label{eq:29}
\end{eqnarray}
in constraint sets \eqref{eq:11} and \eqref{eq:12}, $z_{gt}$ can be substituted out of the formulation, and replaced with $\sum\limits_{i \in K}\sum\limits_{j \in K, j \ne i} y_{gijt}$. When $z_{gt}=1$, the position change for group $g$ is characterized by Lemma \ref{lem:positionChange}. 

\begin{lemma}\label{lem:positionChange}
Given the number of groups on track segments $i$ and $j$ at the end of period $t-1$, $N_{i,t-1}$ and $N_{j,t-1}$, respectively, if $N_{it}^-$ groups move from segment $i$ to segment $j$ in period $t$, then the resulting position changes are given by:
\begin{eqnarray}
\Delta_{gt} &= N_{j,t-1} - N_{i,t-1} + N_{it}^-,& 
\forall\, g \in G,\; t \in T. \label{eq:30}
\end{eqnarray}
\end{lemma}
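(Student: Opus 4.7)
The plan is to compute directly the position change $\Delta_{gt}$ for any group $g$ that moves from segment $i$ to segment $j$ in period $t$ (that is, any group with $y_{gijt}=1$), by exploiting the LIFO structure of the tracks described in the problem definition.

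First, I would identify which groups are actually removed from segment $i$. Because each track is accessible only from the switch end, the $N_{it}^-$ groups removed in period $t$ must be the topmost groups on segment $i$, namely those occupying the $N_{it}^-$ largest position indices at the end of period $t-1$. Hence the prior positions of the moving groups are exactly $N_{i,t-1}-N_{it}^-+1,\,N_{i,t-1}-N_{it}^-+2,\,\ldots,\,N_{i,t-1}$.

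Second, I would determine where these groups land on segment $j$. Since the locomotive pushes the moving block of cars onto segment $j$ from its switch end, the block is appended above the $N_{j,t-1}$ groups already present, and the internal relative order within the block is preserved. Consequently the group previously at position $N_{i,t-1}-k$ on segment $i$, for $0 \le k \le N_{it}^--1$, ends at position $N_{j,t-1}+N_{it}^--k$ on segment $j$. Writing $P_{g,t-1}=N_{i,t-1}-k$, the new position is
\[
P_{gt}=N_{j,t-1}+N_{it}^--k=P_{g,t-1}+\bigl(N_{j,t-1}-N_{i,t-1}+N_{it}^-\bigr),
\]
so that $\Delta_{gt}=P_{gt}-P_{g,t-1}=N_{j,t-1}-N_{i,t-1}+N_{it}^-$, independent of $k$.

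The main (mild) obstacle is articulating the uniformity of the shift: one must argue that every moving group experiences the same position change rather than a $k$-dependent change. This is immediate once one observes that removal (from the top of $i$) and placement (on top of $j$) are both order-preserving on the moving block, so each of its groups is translated by the same offset. No combinatorial or optimization argument is needed beyond careful bookkeeping of the LIFO semantics, and the lemma follows directly.
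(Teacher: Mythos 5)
Your proposal is correct and follows essentially the same route as the paper's proof: identify the moving groups as the $N_{it}^-$ highest-positioned groups on segment $i$, observe that they are appended in order above the $N_{j,t-1}$ groups on segment $j$, and compute $P_{gt}=N_{j,t-1}+P_{g,t-1}-(N_{i,t-1}-N_{it}^-)$ so that the shift is uniform across the block. The only difference is presentational — you parameterize by the offset $k$ within the moving block, while the paper works directly with $P_{g,t-1}$ — but the bookkeeping is identical.
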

\begin{proof}
Please see Appendix \ref{sec:AppendixA}.
\end{proof}

\noindent Constraint sets \eqref{eq:13} and \eqref{eq:14} enforce the correct position change for a group $g$ if it moves from track segment $i$ to $j$ in period $t$, where $M_1$ is a sufficiently large ``big $M$'' value that we can set equal to twice the number of groups (2$\mathcal{G}$) without loss of optimality. Specifically, the right-hand side of constraint set \eqref{eq:13} will be negative if group $g$ does not move from track segment $i$ to $j$ in period $t$, and this constraint is thus redundant. If $y_{gijt}=1$, the term $M_1(1 - y_{gijt})$ vanishes, and constraint sets \eqref{eq:13} and \eqref{eq:14} reduce to enforcing that $\Delta_{gt} = N_{j,t-1} - N_{i,t-1} + N_{it}^-$, thereby ensuring the correct position change for group $g$ in period $t$. Constraint set \eqref{eq:15} updates the resulting position changes in each period.

Constraint sets \eqref{eq:16} and \eqref{eq:17} ensure that the correct groups are selected for a move in any given period ($M_2$ is a sufficiently large ``big $M$'' value that we can set equal to the number of groups, $\mathcal{G}$, without loss of optimality). Recall that constraint sets \eqref{eq:5} and \eqref{eq:6} enforce \(y_{gijt} = 0\) for any track segment pairs \((i,j)\) with \(v_{ijt} = 0\), and for any group/track segment pairs with \(x_{gi,t-1} = 0\). As a result, the binary variable \(y_{gijt}\) can only take the value of 1 if \(v_{ijt} = 1\) and \(x_{gi,t-1} = 1\). In other words, if a move occurs from segment \(i\) to segment \(j\) in period \(t\), then group \(g\) must have been located on segment \(i\) in the previous period $t-1$ and the locomotive must have moved from segment \(i\) to segment \(j\) in period $t$. Both constraint sets are only active for groups $g$ on track segment $i$ at time $t-1$ if there is a move from segment $i$ to $j$ in period $t$; otherwise, the right-hand side of both of these constraint sets will be negative. If there is a move from segment $i$ to segment $j$ in period $t$, then for any group $g$ on segment $i$ at time $t-1$, the right-hand side of the first constraint set will equal $P_{g,t-1} - (N_{i,t-1}-N_{it}^-)$, which is positive for the $N_{it}^-$ highest indexed groups that will move, forcing each corresponding $y_{gijt}$ variable to one.  In this case, the right-hand side of the second constraint set above will equal $N_{i,t-1}-N_{it}^- - P_{g,t-1}+1$, which is positive for the $N_{i,t-1}-N_{it}^-$ lowest indexed groups that will not move, forcing each corresponding $y_{gijt}$ variable to zero. 

Constraint sets \eqref{eq:18}--\eqref{eq:21} update each group’s track assignment as needed in each time period. The main idea is that the binary variable \(x_{git}\) depends on its previous status \(x_{gi,t-1}\) and on the movement decisions \(y_{gjit}\) and \(y_{gijt}\). Specifically, if a group was not on track \(i\) before, it can only arrive if it moves in from some other track; if a group was on track \(i\) before, it can only leave by moving out to another track. In this way, these constraints enforce consistent transitions—remaining on the same track or relocating to a different one—without allowing contradictory assignments. Constraint set \eqref{eq:18} ensures that a group cannot be on track \(i\) at time \(t\) if it was not on track \(i\) in the previous period and does not perform a shunting move onto track \(i\). However, if a group was not on track $i$ in the previous period but does transition onto it at time $t$, constraint set \eqref{eq:19} ensures that the group is on track $i$ in the current period. Similarly, constraint set \eqref{eq:20} ensures that a group cannot be on track \(i\) at time \(t\) if it was on track \(i\) in the previous period and was shunted onto some other track \(j\) in period $t$. However, if a group was on track $i$ in the previous period but does not undergo a shunting move onto another track \(j\), constraint set \eqref{eq:21} ensures that this group remains in place.

Constraint sets \eqref{eq:22}--\eqref{eq:25} terminate the model if all groups are in their destination segments at time $t$. Constraint sets \eqref{eq:22} and \eqref{eq:23} ensure that \(w_{t}=1\) only when every group is in its respective destination at time \(t\). In particular, each group \(g\) in \(G_M\) must occupy its assigned destination track \(d(g)\), and each group \(g\) in \(G_N\) must occupy some track in \(K_C\). Next, constraint set \eqref{eq:24} captures the idea that \(u_{t}\) can only become 1 at the first period when all groups have reached their destinations—i.e., when \(w_{t}\) transitions from 0 to 1. Constraint set \eqref{eq:25} guarantees that exactly one period is identified as the earliest point in time when all groups reach their destination segments. Finally, constraint set \eqref{eq:26} ensures that moves start from time 1 and continue without interruption until all groups have reached their destinations.

Constraint set \eqref{eq:27} enforces the binary requirements for the decision variables \(x_{git}, v_{ijt}, y_{gijt}, w_t,\) and \(u_t\), while constraint set \eqref{eq:28} ensures that the decision variables \(N_{it}, N_{it}^+, N_{it}^-, \Delta_{gt}^+, \Delta_{gt}^-,\) and \(P_{gt}\) are nonnegative integers.

\subsubsection{Determining an appropriate time horizon}
\label{sec:Tvalue}
We next describe how we determine an appropriate value of the time horizon, $\mathcal{T}$, for a problem instance of our RSP model. Determining an appropriate time horizon value is crucial: the horizon must be sufficiently long to ensure finding an optimal solution without leading to unnecessarily high computing time. To achieve this, we employ a fast polynomial-time heuristic that generates candidate feasible solutions detailing group moves in each period, ensuring that all groups ultimately reach their designated tracks. We set the planning horizon $\mathcal{T}$ of the RSP model equal to the total number of shunting moves required by this heuristic solution, as each shunting move requires one time period. Thus, our MIP model determines the minimum cost objective function value within at most $\mathcal{T}$ periods.  For ease of explanation, this heuristic assumes the existence of at least two classification tracks. We also ignore track length constraints, assuming tracks are sufficiently long to accommodate any number of groups (adjustments to the proposed heuristic to account for track length limitations are fairly straightforward assuming a sufficient amount of total track capacity, although we omit the details for the sake of brevity).  

In this context, we define \textit{merge} as the process of combining two car groups that share the same destination into a single group. Additionally, a car group is classified as \textit{switch-end-positioned} if it possesses the highest position index on its track, as \textit{middle-positioned} if its position index is neither the highest nor lowest on its track, and as \textit{dead-end-positioned} if it has an index of one on its track. We index tracks in the set $K$ from $0$ (top) to $n$ (bottom) and let $\mathcal{H}\subseteq K_C$ denote the set of tracks that contain at least one car group in the set $G_M$. With a slight abuse of notation, we denote \(d_s(i)\) as the destination track associated with the switch-end–positioned group on track \(i \in \mathcal{H}\), and let $h_w$ denote the $w^{\rm{th}}$ element of $\mathcal{H}$ for $w=1,\ldots,|\mathcal{H}|$, where $h_1=\{\min i:i\in \mathcal{H}\}$, $h_{|\mathcal{H}|}=\{\max i: i \in \mathcal{H}\}$, and the elements of $\mathcal{H}$ are indexed in increasing order of corresponding track number. Let \(\delta\) represent a difference in track index values, and define

\[
\mathcal{P} = \{(i,j) \in \mathcal{H} : i < j,\; j-i \le \delta,\; d_s(i) = d_s(j)\}.
\]
In other words, \(\mathcal{P}\) is the set of track pairs whose indices differ by no more than \(\delta\) and whose switch-end–positioned groups share the same destination. The heuristic, shown in Algorithm \ref{alg:Theuristic} in Appendix \ref{sec:algorithms}, consists of three main phases: initial preprocessing, adaptively dropping off car groups on destination tracks, and calculation of the time horizon $\mathcal{T}$. 

The initial preprocessing phase aims to form a train that includes all car groups in \( G_M \) while excluding any car groups in \( G_N \) through three key steps. First, on each track, switch-end-positioned groups are merged if they share the same destination and their track differences are within the specified parameter \( \delta \). 
Second, starting with $w=|\mathcal{H}|$, all car groups on track \( h_w \) (except those that are dead-end-positioned and belong to \( G_N \)) are moved to track \( h_{w-1} \).  If $w$ is greater than two, we decrease $w$ by one and repeat, eventually forming a single complete train on track \( h_1 \). Finally, each car group \( g \in G_N \) that is switch-end-positioned or middle-positioned is iteratively shunted to track $h_1+1$. Figures \ref{leftblock} and \ref{middleblock} illustrate the shunting operations for these two types. Each car group \( g \in G_N \) that is switch-end-positioned requires one shunting move. However, each car group \( g \in G_N \) that is middle-positioned requires two shunting moves: the first involves transferring all car groups up to $g$ to the lower track, and the second involves returning the car groups belonging to \(G_M\). This phase ensures that track \(h_1\) ultimately contains only car groups from \( G_M \) and none from \( G_N \).

\begin{figure}[htbp!]
\begin{center}
\includegraphics[scale=0.12]{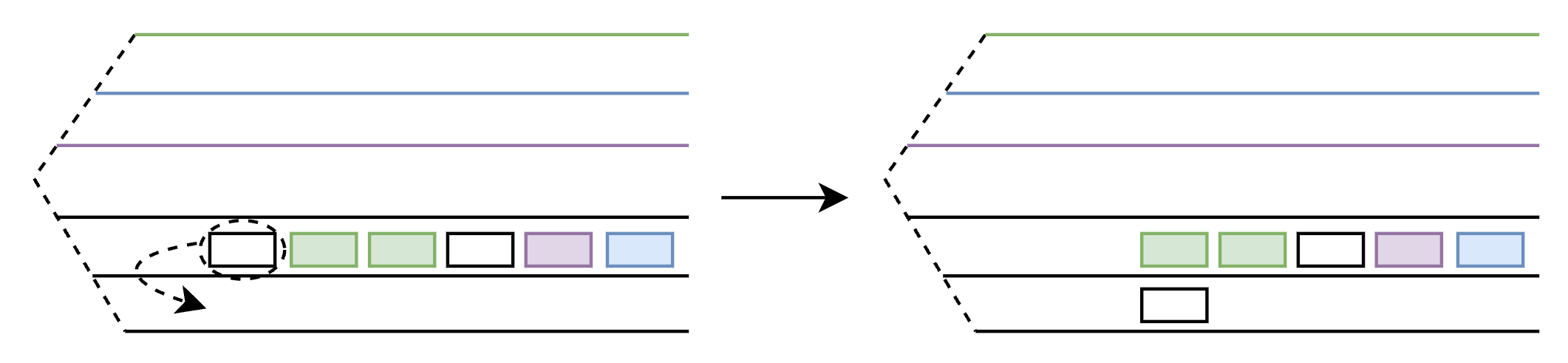}
\caption{Example of shunting operations for switch-end-positioned group $g\in G_N$} \label{leftblock}
\end{center}
\end{figure}

\begin{figure}[htbp!]
\begin{center}
\includegraphics[scale=0.16]{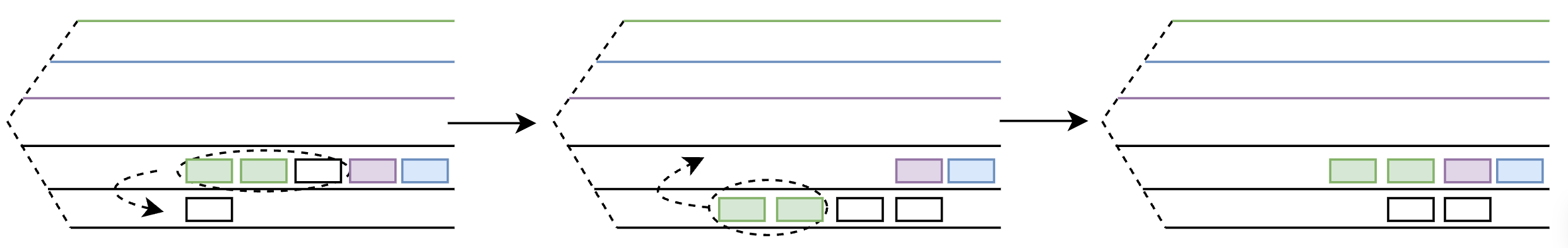}
\caption{Example of shunting operations for middle-positioned group $g\in G_N$} \label{middleblock}
\end{center}
\end{figure}

The heuristic next adaptively moves groups located on track \(h_1\) to their destination tracks by iteratively releasing the dead-end-positioned group onto its designated destination track. Figure \ref{dropoff} illustrates the drop-off operations for a train consisting of four car groups. It is easy to see that the number of shunting moves required for this is equal to the number of car groups \(g\in G_M\).

\begin{figure}[htbp!]
\begin{center}
\includegraphics[scale=0.19]{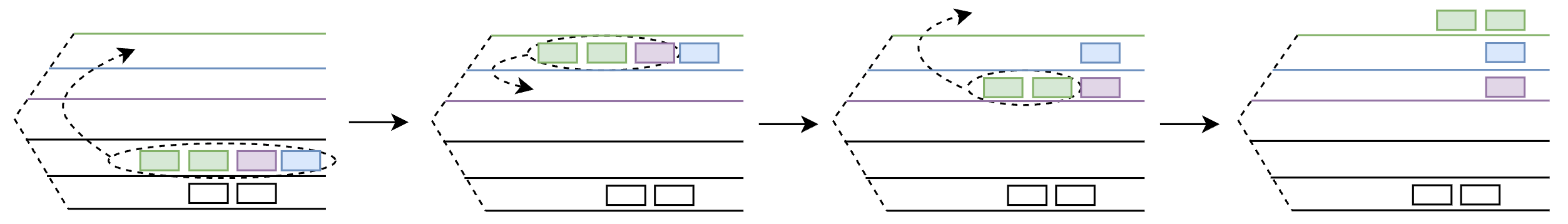}
\caption{Example of shunting operations to drop off dead-end-positioned group $g\in G_M$} \label{dropoff}
\end{center}
\end{figure}

The time horizon length $\mathcal{T}$ is then set equal to the total number of shunting moves made during the initial preprocessing and adaptive car group drop-off processes, since each shunting move requires one time period.

\subsection{DP model of the RSP}
This subsection describes a DP model for the RSP. We retain the use of $K, K_C, K_D, G, g$, and $d(g)$ as in the RSP problem definition. Let \( S \) be the finite set of all possible system states, where each state \( s \in S \) describes, for every track \( k \in K \), an ordered sequence of groups occupying that track. The current location of the locomotive is not included in the state definition. Given a state \( s \in S \), let \( k_g(s) \) be the track on which group \( g \) resides. We specify a final state \(s_F\) in which each car group \(g \in G\) is on its destination track, i.e., 
\begin{align}
k_g(s_F) & = d(g). 
\end{align}
Note that if we create a single final state corresponding to any state in which each group occupies its destination track regardless of group sequence on the track, then we can avoid creating an exponential number of final states.  Our heuristic approximate DP approach discussed in Section \ref{sec:ARG} merges consecutive groups on a common track with the same destination to avoid the possibility of duplicating what are essentially identical states. We begin with an initial state \(s_0\) indicating the current state of the railyard. 

An action consists of selecting one or more car groups and moving them from their current track to another track in a single time period, which corresponds to one shunting move as defined previously. Let \(A_s\) and \(c_s(a)\) denote, respectively, the set of all possible actions in state \(s\) and the cost associated with the action.

For each state \(s\in S\), we let \(V(s)\) be the minimum cost required to reach the final state from \(s\). The transition function \(\mathfrak{t}(a,s)\) determines the state reached from 
\(s\) when action \(a\) is applied. The dynamic programming recursion is defined as follows:

\begin{align}
V(s) &= \min_{a \in A_s} \Bigl\{ c_s(a) + V\bigl(\mathfrak{t}(a, s)\bigr) \Bigr\}. \label{eq:32}
\end{align}

\noindent The solution value of the RSP then equals \(V(s_0)\).

\subsubsection{Directed graph representation of the DP approach}
\label{sec:Graph}
A weighted directed graph \(G(V, E)\) can be constructed associated with our DP model with each vertex corresponding to a particular state \(s \in S\). A directed edge from vertex \(s\) to vertex \(s'\) indicates that a feasible action \(a \in A_{s}\) exists such that \(\mathfrak{t}(a, s) = s'\), with the edge weighted by the associated cost \(c_s(a)\). We construct the graph along a layered horizontal timeline. The initial state vertex \(s_0\) is placed at time layer \(0\). Starting from state \(s_0\), we generate all possible new states reachable using a single action (move), create a vertex corresponding to each of these new states in time layer \(1\), and add a directed edge from \(s_0\) to each generated vertex with weight corresponding to the the value of $c_s(a)$ associated with the transition cost. For each vertex in layer 1, we generate a label containing the transition cost from state $s_0$ and set its predecessor to $s_0$.  The vertex $s_0$ has a label equal to zero and has no predecessor.

After creating a new set of vertices at each layer $\tau\ge 1$, for each vertex in layer \(\tau\), we again enumerate all states reachable with a single action.  If a valid action at a state $s_j$ in layer $\tau$ results in a transition to a state $s_k$ corresponding to an existing vertex in layer $t\le \tau$, then if the label of $s_j$ plus the transition cost is less than the label of $s_k$, we create a directed edge from $s_j$ to $s_k$ and change the label of $s_k$ to equal the label of $s_j$ plus the transition cost (we also change the predecessor of $s_k$ to $s_j$).  Otherwise, if the valid action results in a transition to a new state that has not yet been generated, we create a new vertex corresponding to the new state in layer $\tau+1$, with label value equal to the label of $s_j$ plus the transition cost, and a predecessor of $s_j$. Note that the label value at any point in time corresponds to the length of the shortest path from vertex $s_0$, while the time layer corresponds to the minimum number of moves to reach the given state.

The expansion continues until no new states are produced and all available moves from every existing vertex have been explored. The first time a state corresponding to the final state is generated, a vertex $S_F$ is created. Note that we do not generate successors from vertex $S_F$, but continue expanding other non-final states whose action/transition pairs have not yet been enumerated. A DP network example with one final state is illustrated in Figure \ref{DPnetwork}.

\begin{figure}[htbp!]
\begin{center}
\includegraphics[scale=0.21]{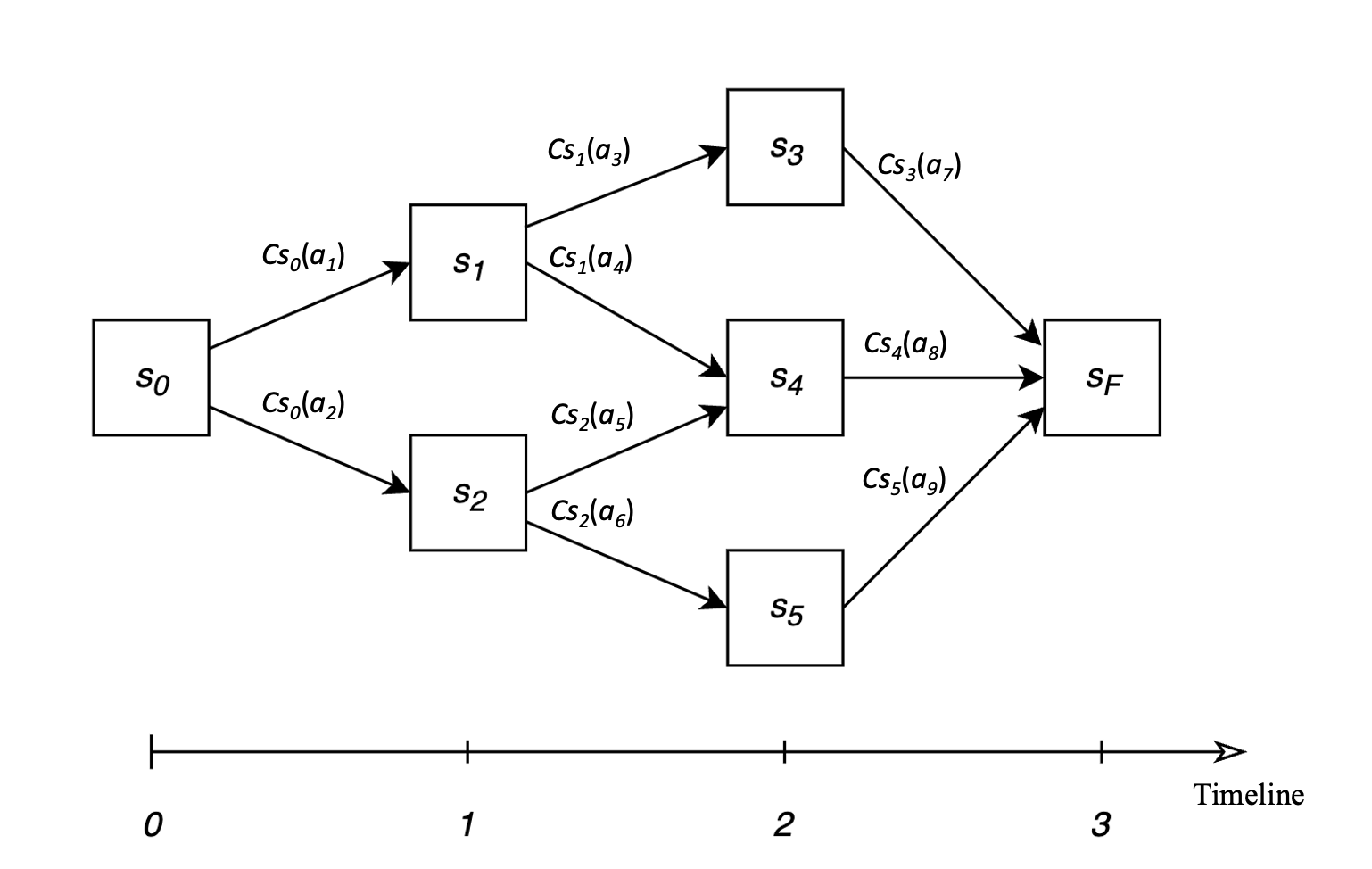}
\caption{DP network example} 
\label{DPnetwork}
\end{center}
\end{figure}

\subsubsection{Total number of states}
\label{sec:States}
In this subsection, letting $n(d)$ denote the number of car groups with destination $d\in K_D$, we characterize the total number of states of the DP network, as detailed in Lemma \ref{lem:DPstates}.

\begin{lemma}\label{lem:DPstates}
Given the number of car groups, $\mathcal{G}$, the number of car groups in $G_N$ (which do not have an outbound destination),  the number of classification tracks \(|K_C|\), and the total number of tracks, \(|K|\), the total number of states $|V|$ in the DP network, representing all possible arrangements of $\mathcal{G}$ car groups across \(|K|\) tracks, allowing for empty tracks, can be expressed as: 

\begin{align}
|V| = \mathcal{G}! \times \binom{\mathcal{G} + |K| - 1}{|K| - 1}-\left\{|G_N|! \times
\binom{|G_N|+|K_C|-1}{|K_C|-1} \times \prod_{d \in K_D}n(d)!-1\right\}.\label{eq:combos}
\end{align}
\end{lemma}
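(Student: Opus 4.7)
The plan is a straightforward counting argument: first count every ordered arrangement of the $\mathcal{G}$ distinguishable car groups across the $|K|$ tracks, then identify the arrangements that satisfy the final-state condition and subtract the overcount introduced by collapsing them into the single state $s_F$.

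First, I would count the raw arrangements. A state (other than $s_F$) specifies, for each track $k \in K$, an ordered sequence of groups occupying that track, with all $\mathcal{G}$ groups placed somewhere and empty tracks permitted. I would count these by (i) choosing a linear ordering of all $\mathcal{G}$ groups ($\mathcal{G}!$ ways), and (ii) choosing a composition $n_1 + n_2 + \cdots + n_{|K|} = \mathcal{G}$ with $n_i \ge 0$ that determines how many consecutive groups of the permutation populate each track. The number of such compositions equals $\binom{\mathcal{G}+|K|-1}{|K|-1}$ by stars-and-bars, so the total number of raw arrangements is $\mathcal{G}! \cdot \binom{\mathcal{G}+|K|-1}{|K|-1}$, reproducing the first term.

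Next, I would count the arrangements that realize the final-state condition $k_g(\cdot)=d(g)$ for every $g \in G$. By definition, each $g \in G_M$ must lie on its unique destination track $d(g) \in K_D$, and each $g \in G_N$ must lie on some track in $K_C$. These two placement requirements are independent, so their counts multiply. On each destination track $d \in K_D$, the $n(d)$ distinguishable groups destined for $d$ can be ordered in $n(d)!$ ways, giving $\prod_{d \in K_D} n(d)!$ arrangements of the $G_M$ groups in total. The $|G_N|$ groups without assigned destinations can be distributed across the $|K_C|$ classification tracks in $|G_N|! \cdot \binom{|G_N|+|K_C|-1}{|K_C|-1}$ ways by the same permutation-and-composition argument applied to $G_N$ and $K_C$. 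Multiplying yields the bracketed quantity $|G_N|! \cdot \binom{|G_N|+|K_C|-1}{|K_C|-1} \cdot \prod_{d\in K_D} n(d)!$ as the number of raw arrangements that collapse to $s_F$. Since the DP identifies all of them with the single state $s_F$, the overcount is this quantity minus one, and subtracting it from the raw total produces the claimed formula.

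The main obstacle is not technical difficulty but bookkeeping precision. I would need to justify carefully that (i) the stars-and-bars-times-permutation formula genuinely enumerates ordered sequences on distinguishable, possibly empty tracks without double counting, (ii) the $G_M$- and $G_N$-placement constraints in the final-state condition are mutually independent so that their counts multiply (this uses the fact that $K_C \cap K_D = \emptyset$, so a $G_N$ group on a classification track cannot conflict with the placement of any $G_M$ group), and (iii) the adjustment is $-(F-1)$ rather than $-F$, since the $F$ overcounted arrangements are being merged into one state rather than deleted. Each of these is routine but easy to mis-state, so a clean proof should separate them into explicit claims.
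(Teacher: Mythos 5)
Your proposal is correct and follows essentially the same route as the paper's proof: count raw arrangements as $\mathcal{G}!$ orderings times a stars-and-bars composition over the $|K|$ tracks, count the final-state arrangements as the product of the $G_N$ permutation-and-composition term with $\prod_{d\in K_D} n(d)!$, and subtract that quantity minus one to collapse the duplicated final states into a single state. The three bookkeeping points you flag (non-double-counting of the stars-and-bars enumeration, independence of the $G_M$ and $G_N$ placements, and the $-(F-1)$ adjustment) are exactly the points the paper's proof handles.
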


\begin{proof}
Please see Appendix \ref{sec:AppendixA}.
\end{proof}

As shown in Appendix \ref{sec:AppendixA}, the first term on the right-hand side of \eqref{eq:combos} corresponds to the number of ways in which $\mathcal{G}$ different groups can be arranged on $|K|$ tracks. The second term in brackets represents the number of possible arrangements corresponding to the final state, where each group in $G_N$ occupies any track in $K_C$, and each group in $G_M$ has reached its destination track (we can safely eliminate all but one of these states). The total number of states \(|V|\) indicates that using classical algorithms such as  Dijkstra’s algorithm \parencite{dijkstra2022note} to find the shortest path between the initial state \(s_0\) and a final state \(s_F\) is challenging due to the large graph size. Note that reducing the number of car groups and tracks can dramatically reduce the state space. In particular, if the number of car groups with destination $d$ is reduced by \(q(d)\) for each $d\in K_D$, the number of car groups $|G_N|$ reduced by \(p\), and the number of tracks is reduced by $r$, respectively, the corresponding reduction in the total number of states can be quantified as follows, where $q=\sum_{d\in D} q(d)$, $\mathcal{G}_{pq}^-=\mathcal{G}-p-q$, $\mathcal{G}_{N,p}^-=|G_N|-p$,  $\mathcal{K}_r^-=|K|-r$, $\mathcal{K}_{C,r}^-=|K_C|-r$, and $n_q^-(d)=n(d)-q(d)$,

\begin{corollary}[State Space Reduction by Decreasing the Number of Car Groups and Tracks]\label{cor:decreasebygroup_track}
If the number of car groups with destination $d$ is reduced by \(q(d)\) for each $d\in K_D$, (with \(1 \le q(d) \le n(d)\)), the number of car groups $|G_N|$ reduced by \(p\) (with \(1 \le p \le |G_N|\)), and the number of classification tracks is reduced by \(r\) (with \(1 \le r \le |K_C|-1\)), then the new total number of states is
\[
|V^{(p,q,r)}| = \mathcal{G}_{pq}^-! \times \binom{\mathcal{G}_{pq}^-+\mathcal{K}_r^--1}{\mathcal{K}_r^--1} -\left\{ \mathcal{G}_{N,p}^-! \times
\binom{\mathcal{G}_{N,p}^-+\mathcal{K}_{C,r}^--1}{\mathcal{K}_{C,r}^--1} \times\prod_{d\in K_D}n_q^-(d)!-1\right\},
\]
and the ratio of the new total number of states to the original total number of states is given by:
\[
\frac{|V^{(p,q,r)}|}{|V|}=\frac{
\prod\limits_{j=1}^{\mathcal{G}_{pq}^-}(\mathcal{G}_{pq}^-+\mathcal{K}_r^--j)-
\left\{\prod\limits_{i=1}^{\mathcal{G}_{N,p}^-}(\mathcal{G}_{N,p}^-+\mathcal{K}_{C,r}^--i)
\prod\limits_{d\in K_D} n_q^-(d)!-1\right\}}
{\prod\limits_{j=1}^{\mathcal{G}}(\mathcal{G}+|K|-j)-\left\{\prod\limits_{i=1}^{|G_N|}(|G_N|+|K_C|-i)\prod\limits_{d\in K_D} n(d)!-1\right\}},
\]
Consequently, the percentage decrease in the number of states equals
\[
\left(1-\frac{|V^{(p,q,r)}|}{|V|} \right)\times 100\%.  
\]

\end{corollary}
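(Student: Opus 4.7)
The plan is to obtain the corollary as an immediate consequence of Lemma~\ref{lem:DPstates} applied to the reduced problem instance, followed by a routine algebraic simplification that converts the factorial/binomial expression into the displayed product form so the common factors between numerator and denominator become visible.

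First I would observe that, after reducing the problem parameters as described, the reduced system has $\mathcal{G}_{pq}^- = \mathcal{G}-p-q$ total car groups distributed over $\mathcal{K}_r^- = |K|-r$ tracks, with $\mathcal{G}_{N,p}^- = |G_N|-p$ groups that have no outbound destination, $\mathcal{K}_{C,r}^- = |K_C|-r$ classification tracks, and $n_q^-(d)=n(d)-q(d)$ groups targeting each destination $d \in K_D$. Since Lemma~\ref{lem:DPstates} is agnostic to the specific numerical values of the parameters, substituting the reduced parameters directly into \eqref{eq:combos} yields the stated expression for $|V^{(p,q,r)}|$.

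Next I would establish the product identity
\[
\mathcal{G}! \times \binom{\mathcal{G}+|K|-1}{|K|-1} \;=\; \frac{(\mathcal{G}+|K|-1)!}{(|K|-1)!} \;=\; \prod_{j=1}^{\mathcal{G}}(\mathcal{G}+|K|-j),
\]
and analogously $|G_N|! \binom{|G_N|+|K_C|-1}{|K_C|-1} = \prod_{i=1}^{|G_N|}(|G_N|+|K_C|-i)$, together with the corresponding identities for the reduced parameters. Substituting these product forms into both $|V|$ and $|V^{(p,q,r)}|$ and forming the ratio $|V^{(p,q,r)}|/|V|$ produces exactly the expression stated in the corollary. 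The percentage decrease formula is then immediate, since the fraction of states removed equals $1 - |V^{(p,q,r)}|/|V|$ by definition.

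The main obstacle, if any, is purely notational: one must carefully track which indices run over $G_N$ versus over $K_D$, and verify that the $-1$ adjustment (accounting for the single retained final-state representative) appears in both the reduced and original expressions in exactly the same way so it is preserved through the substitution. Once the product-form identity for $\mathcal{G}! \binom{\mathcal{G}+|K|-1}{|K|-1}$ is in hand, no deeper combinatorial argument is required — the corollary is a direct parametric specialization of Lemma~\ref{lem:DPstates}.
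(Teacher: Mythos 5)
Your proposal is correct and follows essentially the same route as the paper's proof: substitute the reduced parameters into Lemma~\ref{lem:DPstates}, use the identity $\mathcal{G}!\binom{\mathcal{G}+|K|-1}{|K|-1}=\prod_{j=1}^{\mathcal{G}}(\mathcal{G}+|K|-j)$ (and its analogues) to rewrite both counts in product form, and take the ratio. Nothing is missing.
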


\begin{proof}
Please see Appendix \ref{sec:AppendixA}.
\end{proof}

The above corollary quantifies how the state space shrinks when the number of car groups and number of tracks decrease. For example, if \(\mathcal{G}=|K|=9\), $|G_N|=3,|K_D|=3, |K_C|=6$, and $n(1)=5,n(2)=1,n(3)=6$, reducing the number of car groups \(|G_N|\) by just one (i.e., $p=1$) decreases the total number of states by $94.14\%$. Reducing the number of car groups \(|G_N|\) and tracks each by one (i.e., $p=r=1$) results in a state space reduction of $97.08\%$. Thus, even a small reduction in the number of car groups and/or tracks significantly decreases the DP network size. Motivated by this, the next two sections first establish the \(\mathcal{NP}\)-hardness of the RSP and then propose a heuristic algorithm that uses merging techniques to reduce the number of car groups and eliminate some of the unnecessary empty tracks, thereby enabling an efficient solution via a DP-based heuristic.

\section{Complexity}
\label{sec:complexity}
This section characterizes the complexity of the RSP problem. We begin by defining a special case of the RSP (RSP-sc), and relate it to the problem of zero-shuffle stack sorting (ZSSS). 

\subsection{ZSSS and an RSP-sc}
The stack sorting problem (SSP) begins with an arbitrary sequence of the digits 1 through $n$ on a \emph{source} stack, with $k$ additional stacks available for sorting the numbers. The switch end of a stack is referred to as the top, while the dead end is the bottom. Each number must ultimately enter a separate \emph{sink} stack (only once), and the digits must be sequenced in the sink stack in numerical order from top to bottom (and must, therefore, enter the sink stack in reverse numerical order). A sorting stack can hold any subset of the numbers and can only be accessed from the top, so that items must enter and exit any sorting stack in LIFO order, and we may only move one digit at a time from the top of one stack to the top of some other stack. A shuffle occurs if any one of the digits enters more than one of the $k$ sorting stacks at any point. If, for example, some digit appears above a higher numbered digit on one of the $k$ sorting stacks, then a shuffle is required before the higher numbered digit may enter the sink stack (which must occur before the lower numbered digit can enter). The ZSSS problem asks, for a given initial sequence on the source stack and $k$ sorting stacks, whether a solution exists without any shuffles. Note that the version of the ZSSS problem we have described does not contain a so-called ``midnight constraint,'' where a midnight constraint requires that all items are moved from the source stack to one of the $k$ sorting stacks before any item can be moved to the sink track \parencite{KLstacksorting}. \textcite{EVEN1971} showed that a one-to-one correspondence exists between the ZSSS problem without the midnight constraint, and the problem of finding a $k$-coloring on a circle graph, which was proven to be $\mathcal{NP}$-complete for $k\ge 4$ by \textcite{unger1988k}. Thus, the problem of determining whether a zero-shuffle solution exists for the stack sorting problem with $k\ge 4$ stacks and without a midnight constraint is also $\mathcal{NP}$-complete.

We next define a special case of the RSP that is equivalent to a version of the ZSSS problem in which we may move any consecutive sequence of digits from the top of a stack to the top of another stack (i.e., we do not require moving one digit at a time). 

\begin{definition}[\textbf{RSP-sc}]
Consider an RSP with \(k+2\) classification tracks labeled \(0\) through \(k+1\). We refer to track 0 as the source track and track \(k+1\) as the outbound track, where $n$ car groups with different destinations are located on the source track (and no additional groups are in the system). Suppose the $n$ car groups on the source track are in some sequence \(\sigma_A\) consisting of permutation of the numbers 1 through $n$. Departure tracks are indexed from top to bottom in decreasing order, from \(n'\) to \(1'\), as illustrated in Figure \ref{system}. We define the following cost parameters:
\begin{itemize}
    \item Moving any subset of car groups from track 0 to any track \(1,\ldots,k\) has zero cost; that is, \( c_{ij} = 0 \) for \( i = 0 \), \( j \in \{1,\ldots,k\} \).
    \item Moving any subset of car groups from any track \(1,\ldots,k+1\) to any track \(0,1,\ldots,k\) has cost \(\bar{c} > 0\); that is, $c_{ij}=\bar{c}>0$ for $i\in \{1, \ldots, k+1\}$, $j\in \{0, 1, \ldots, k\}$.
    \item Moving any subset of car groups from any track \(0,\ldots,k\) to any departure track \(1',2', \ldots,n'\) also has cost \(\bar{c} > 0\); that is, $c_{ij}=\bar{c}>0$ for $i\in \{0, \ldots, k\}$, $j\in \{1', 2', \ldots, n'\}$.
    \item Moving any subset of car groups from any track \(1,\ldots,k\) to track \(k+1\) has zero cost; that is, \( c_{ij} = 0 \) for \( i \in \{1,\ldots,k\}  \), \( j=k+1\).
    \item Moving any subset of car groups from track \(k+1\) to the nearest departure track \(1'\) and between any consecutive departure tracks both have cost \(R\), with $R>0$ and $nR < \bar{c}$.
\end{itemize}

Our question is whether the sequence \(\sigma_A\) on track $0$ can be sorted using classification tracks $1,...,k+1$ to form the desired departure track sequence \(\sigma_D\) in decreasing order from the switch end to the dead end on the outbound track, and then moved to the corresponding departure tracks at a total cost equal to $nR$.
\end{definition}

Note that in the RSP-sc, any permutation of the digits from 1 to $n$ may appear on track (stack) 0. We say that a sequence on track 0 is in \emph{correct order} if it is either increasing or decreasing from the switch end to the dead end. If it is in increasing order, we can pull the items out together and move them to one of the tracks $1,\ldots, k$ at zero cost, and then move them one at a time to track $k+1$, also at zero cost; if it is in decreasing order, the items can similarly be moved together to one of the tracks $1,\ldots, k$ at zero cost, and then moved all at once to track $k+1$, also at zero cost. Any sequence that does not follow either of these patterns is considered to be in \emph{incorrect order}. For a sequence with incorrect order, we may need to use multiple sorting tracks to sequence them correctly. This process is illustrated below.

Figure \ref{system} shows three instances of RSP-sc with 5 departure tracks and 6 classification tracks. The only difference is the sequence of car groups. Specifically, we have \(\sigma_A = [1 \ 2 \ 3 \ 4 \ 5]\) in Figure \ref{system}(a), \(\sigma_A = [5 \ 4 \ 3 \ 2 \ 1]\) in Figure \ref{system}(b), and \(\sigma_A = [3 \ 1 \ 5 \ 2 \ 4]\)
in Figure \ref{system}(c). The car groups on track 5 in each figure represent the desired departure sequence \(\sigma_D = [5 \ 4 \ 3 \ 2 \ 1]\). In Figure \ref{system}(a), the initial sequence is in the correct order and we can pull all four groups out together and move them to any one of the tracks 1--4 at zero cost, and then pull them out one-by-one and move them to track 5 to form the decreasing order $[5 \ 4 \ 3 \ 2 \ 1]$ from the switch end to the dead end at zero cost. Finally, we move all five groups together to the departure track $1'$ and drop off group 1 at cost \(R\); then, we move the remaining four groups together to the departure track $2'$ and drop off group 2 at cost \(R\); and so on. In this way, only one classification track among tracks 1 to \(k\) is required for a zero-shuffle solution at a cost of \(5R\). Similarly, in Figure \ref{system}(b) we achieve a zero-shuffle solution at a cost of \(5R\) by moving the whole sequence to one of tracks 1–4 at zero cost, then to track 5 at zero cost, and finally dropping off group 1 at track $1'$, group 2 at track $2'$, etc.

However, in the example in Figure \ref{system}(c), a zero-shuffle solution is not possible using a single track among sorting tracks 1–4. That is, if we sequence these five groups only using a single track among tracks 1–4, we cannot achieve the required decreasing order without shuffling. A zero-shuffle solution using two of the four tracks 1–4 is, however, possible. First, group 3 is moved to track 1, followed by group 1, which is also moved to track 1. Next, group 5 is moved to track 2, and group 2 is then moved to track 2 as well. Then, group 1 is moved from track 1 to track 5, followed by group 2 from track 2 to track 5, and group 3 from track 1 to track 5. Next, group 4 is moved to track 2, followed by a transfer of group 4 from track 2 to track 5, followed by moving group 5 from track 2 to track 5. In this way, we form the required decreasing sequence $[5 \ 4 \ 3 \ 2 \ 1]$ on track 5 without a shuffle. We then drop off group 1 at track \(1'\) at a cost of \(R\), car 2 at track \(2'\) at a cost of \(R\), and continue similarly for the remaining groups, as we did for the previous two instances. Overall, two classification tracks among tracks 1 to \(k\) are required for a zero-shuffle solution at a cost of \(5R\).

\begin{figure}[htbp!]
\begin{center}
\includegraphics[scale=0.19]{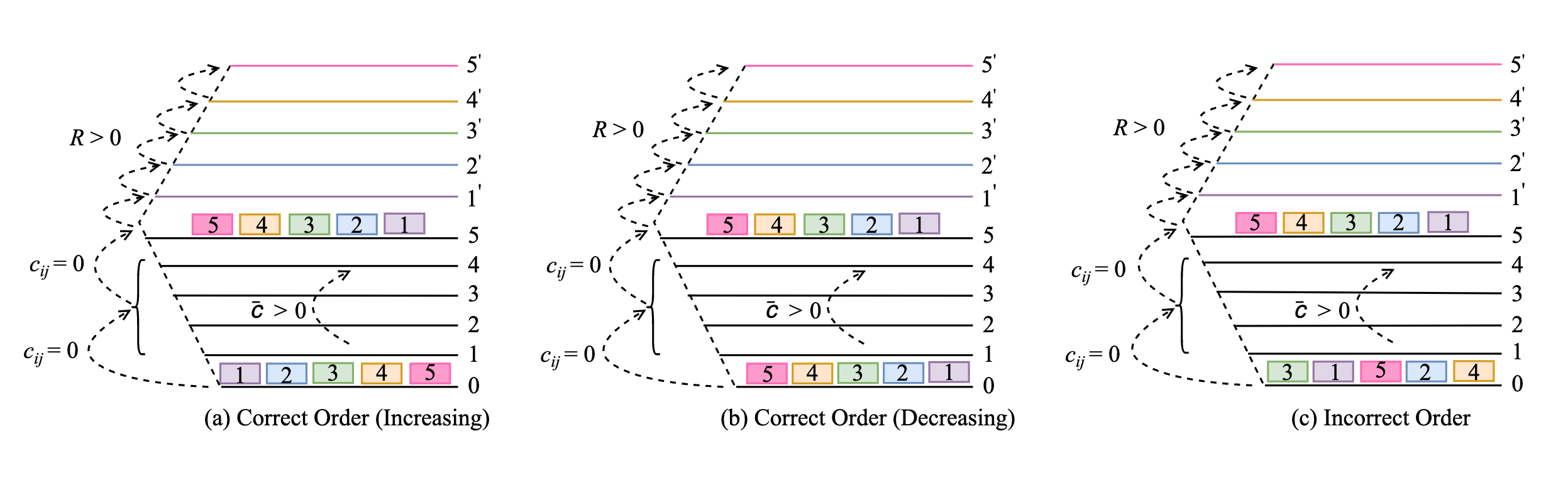}
\caption{Three instances of the RSP-sc with different initial sequences} \label{system}
\end{center}
\end{figure}

In our definition of the RSP-sc, track \(0\) serves as the source stack and track \(k+1\) serves as the sink stack.  The RSP-sc is equivalent to a version of the ZSSS problem that permits a connected set of consecutive car groups to be moved from the top of any stack to the top of any other stack. We demonstrate this equivalence in Appendix \ref{sec:AppNPeq}.  Because the RSP-sc is equivalent to a version of the ZSSS problem in which consecutive item groups can be moved together, we cannot yet claim direct equivalence of the RSP-sc and the ZSSS problem. However, Appendix \ref{sec:AppNPtrans} provides a rather extensive proof that any instance of the ZSSS problem can be transformed in polynomial time to an instance of the RSP-sc, such that a polynomial-time solution for the RSP-sc implies polynomial-time solvability of the ZSSS problem with no midnight constraint, resulting in the following theorem.  
\begin{theorem}\label{th:NP}
The optimization problem RSP with $k\ge 4$ sorting tracks is $\mathcal{NP}$-hard.
\end{theorem}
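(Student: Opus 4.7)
The plan is to establish $\mathcal{NP}$-hardness by polynomial-time reduction from the $k$-stack ZSSS problem without a midnight constraint, which is $\mathcal{NP}$-complete for $k\ge 4$ via the one-to-one correspondence with $k$-coloring of circle graphs \parencite{EVEN1971,unger1988k}. Since RSP-sc is a special case of RSP, it suffices to establish $\mathcal{NP}$-completeness of the recognition version of RSP-sc that asks whether the optimal cost equals $nR$; $\mathcal{NP}$-hardness of the RSP optimization problem then follows immediately because any polynomial-time optimization algorithm for RSP would resolve this recognition question for RSP-sc.

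The first step is to argue that attaining cost $nR$ in the RSP-sc is equivalent to producing a zero-shuffle batch-move stack sort of $\sigma_A$ on the $k+2$ classification tracks. The cost structure is engineered so that the only zero-cost transfers are those from the source track $0$ onto a sorting track in $\{1,\ldots,k\}$ and from any sorting track onto the outbound track $k+1$; every reverse or cross move—from a sorting track back to $0$ or to another sorting track, or from the outbound track onto a classification track—incurs cost $\bar{c} > nR$, while the mandatory $n$ drop-offs from $k+1$ across the departure tracks $1',\ldots,n'$ contribute exactly $nR$ irrespective of the sorting strategy. Consequently, a feasible RSP-sc solution attains cost $nR$ if and only if a zero-shuffle batch-move solution exists for the permutation $\sigma_A$. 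This equivalence between the RSP-sc and the batch-move variant of ZSSS is precisely what is demonstrated in Appendix B.

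The main obstacle, deferred to Appendix B.2, is to bridge the batch-move variant of ZSSS, which is only syntactically equivalent to RSP-sc, with the one-at-a-time ZSSS that is actually known to be $\mathcal{NP}$-complete. The plan is to exhibit a polynomial-time transformation from any $k$-stack one-at-a-time ZSSS instance to an RSP-sc instance such that zero-shuffle batch-move solutions of the latter are in one-to-one correspondence with zero-shuffle one-at-a-time solutions of the former. The subtlety—and the real technical work—lies in engineering the input permutation and track structure so that the extra flexibility of batch moves in RSP-sc cannot be exploited to avoid a shuffle that would be unavoidable under one-at-a-time moves; intuitively, this requires padding or separating the items in $\sigma_A$ so that any useful batch move decomposes into legal one-at-a-time moves on the corresponding ZSSS instance. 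Chaining this polynomial-time transformation with the cost-structure equivalence established in the previous paragraph yields a polynomial-time reduction from $\mathcal{NP}$-complete ZSSS with $k\ge 4$ to the recognition version of RSP-sc, and therefore $\mathcal{NP}$-hardness of the RSP optimization problem with $k\ge 4$ sorting tracks, as claimed.
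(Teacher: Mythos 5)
Your overall strategy coincides with the paper's: reduce from the $k$-stack ZSSS problem without a midnight constraint (NP-complete for $k\ge 4$ via circle-graph $k$-coloring), observe that achieving cost $nR$ in the RSP-sc is equivalent to a zero-shuffle batch-move sort, and then bridge the batch-move variant to the one-at-a-time variant by a polynomial-time instance transformation. The cost-structure equivalence in your second paragraph is sound and matches the paper's argument. However, the third paragraph contains a genuine gap: you correctly identify that the entire difficulty lies in constructing a transformation under which batch moves confer no advantage, but you do not supply that construction, and it is not a routine "padding" exercise. Batch moves materially change which pairs of items conflict: in the RSP-sc a consecutive decreasing subsequence can be relocated in one move and then sent to the sink in one move, so (for instance) the items $i$ and $i+1$ can \emph{never} conflict, and more generally a subsequence that decomposes into CDSs with increasing leading digits creates no conflict even when the corresponding one-at-a-time instance would require a shuffle. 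Without an explicit construction one cannot rule out that this extra power collapses the problem to something polynomially solvable.

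The paper closes this gap by first characterizing exactly when two groups conflict under batch moves (the four conditions of Table \ref{tab:conflict-conditions}, justified by the CDS properties in Appendix \ref{sec:AppNPconflict}), and then giving a nine-step renumbering-and-dummy-insertion procedure that takes an arbitrary ZSSS permutation $\sigma_A$ to a permutation $\sigma_A'$ whose batch-move conflict graph is \emph{identical} to the one-at-a-time conflict graph of $\sigma_A$ (Lemmas \ref{lem:dum} and \ref{lem:121}); since a zero-shuffle solution with $k$ stacks exists in either model if and only if the respective conflict graph is $k$-colorable, the reduction goes through. Your proposal aims at a direct bijection between solutions rather than between conflict graphs, which is a heavier burden than necessary; but in either formulation, the conflict characterization and the explicit dummy-group construction are the substance of the proof, and your write-up leaves both as placeholders. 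As it stands the argument establishes only that the theorem \emph{would} follow if such a transformation exists, not that it does.
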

\noindent In light of Theorem \ref{th:NP} and the corresponding curse of dimensionality implied by the proposed DP formulation in Section \ref{sec:model}, the following section proposes an approximate DP-based heuristic solution method for solving the RSP.

\section{ARG-DP Heuristic}\label{sec:ARG}
Section \hyperref[sec:model]{4} introduced two mathematical models, MIP and DP, for obtaining exact solutions to the RSP. For these models, the problem formulation size and state space, respectively, grow significantly with the number of car groups, the number of tracks, and the length of the planning horizon. Due to the problem’s complexity (as discussed in the previous section) and the scale of practical instances, implementing these models can be computationally prohibitive, and finding an optimal solution may be impossible within acceptable computing time. Because of this, we next propose an ARG-DP heuristic designed to generate solutions efficiently for real-world problem sizes while maintaining an acceptable optimality gap. 

The ARG-DP heuristic framework consists of three main processes: preprocessing, graph construction, and approximate dynamic programming (ADP), as shown in Figure \ref{HF}. Before describing each process, we introduce the following definitions. A \emph{mixed} problem instance is an instance that contains car groups from both \( G_M \) and \( G_N \), while a \emph{non-mixed} problem instance contains only car groups from \( G_M \). For each car group \( g \in G \) and state $s$, we define the distance-to-go, denoted by $\theta_g(s)$, as an estimate of the cost of moving the group from its current track to its target track. This cost is directly proportional to the distance between these tracks. The average distance-to-go for a state, denoted by \( \bar{\theta}(s) \), can be expressed as: \(\bar{\theta}(s) = \frac{1}{\mathcal{G}} \sum_{g \in G} \theta_g(s)\). Let $d_d(i)$ and $d_s(i)$ denote, respectively, the destination track for the \textit{dead-end-positioned} and \textit{switch-end-positioned} groups on track $i$, and define the set of track pairs
\[
\mathcal{Q} = \{(i,j) \in \mathcal{H}: j= i+1,\, d_d(i) = d_s(j),\, \text{and}\; d_s(i) \neq d_d(j) \}.
\]
In other words, \(\mathcal{Q}\) contains pairs of consecutive tracks (i.e., tracks whose indices differ by one) where the destination of the dead-end–positioned group on the first track equals the destination of the switch-end–positioned group on the second track, while the destination of the switch-end–positioned group on the first track differs from that of the dead-end–positioned group on the second track. (Recall that, in contrast, the set of track pairs $\mathcal{P}=\{(i,j)\in \mathcal{H}: i<j, j-i\le \delta\; \text{and}\; d_s(i) = d_s(j)\}$ corresponds to pairs of tracks whose indices differ by no more than \(\delta\) and whose switch-end-positioned groups share the same destination.)

\begin{figure}[htbp!]
\begin{center}
\includegraphics[scale=0.30]{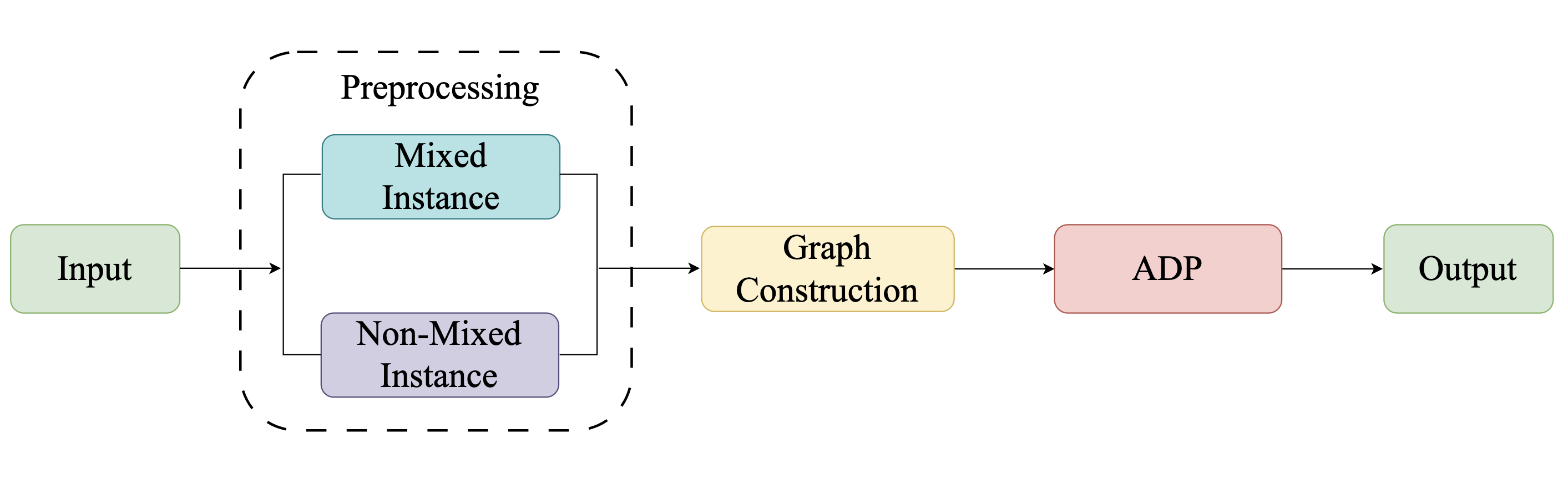}
\caption{ARG-DP heuristic framework.} \label{HF}
\end{center}
\end{figure}

We continue to use the track indexing method introduced in Section \ref{sec:Tvalue}, where tracks are indexed from \(0\) (top) to \(n\) (bottom). We also continue to use \(\mathcal{H}\) to denote the set of tracks containing car groups and sort \(\mathcal{H}\) in ascending order, where \( h_w \) represents the \( w^{\text{th}} \) track in \(\mathcal{H}\), $w=1,\ldots, |\mathcal{H}|$. We define a track as an empty track if it is either completely empty or contains only car groups belonging to \(G_N\).

Recall from  Corollary \ref{cor:decreasebygroup_track} that even a slight reduction in the number of car groups and/or tracks significantly decreases the size of the state space. In the preprocessing stage, the ARG-DP heuristic seeks to merge groups with a common destination (to reduce the number of car groups) and to eliminate a subset of empty tracks (to limit the total number of tracks). In addition, the graph construction phase seeks to reduce the average distance-to-go at each step. To achieve these goals, we first preprocess the initial state and then construct an ADP graph such that only states below a certain threshold value of average cost to go will be considered, thus limiting the resulting problem to a manageable size. 

\subsection{Preprocessing}\label{subsec:pre}
The detailed preprocessing stage for the non-mixed problem instances is presented in Algorithm \ref{alg:nonmixed} in Appendix \ref{sec:algorithms}. Unlike mixed problem instances, non-mixed problem instances contain only car groups from \(G_M\). The preprocessing stage for non-mixed problem instances aims to merge switch-end-positioned groups with common destinations for track pairs in $\mathcal{P}$. If there are no such track pairs in $\mathcal{P}$, we consider merging groups on track pairs in $\mathcal{Q}$ to reduce the total number of groups in the problem instance. Next, in decreasing index order from \(w = |\mathcal{H}|\) to 2, all groups on track \(h_w\) are moved to track \(h_{w-1}\). Notice that these moves help to reduce the size of \(\bar{\theta}(s_{0})\), where \(s_{0}\) is the initial state used as input to the ARG-DP framework. These moves also have the potential to merge the \textit{dead-end-positioned} group on track \(h_w\) with the \textit{switch-end-positioned} group on track \(h_{w-1}\) if they share the same destination. Finally, we form a complete train containing all groups in $G_M$ on track \(h_1\). If \(h_1\) is not equal to $\underline{i}_C=\{\min i:i\in K_C\}$ (i.e., the lowest indexed classification track, which is also the top classification track), the entire train is then moved to track $\underline{i}_C$. This move further decreases the value of \(\bar{\theta}(s_{0})\), as the entire train is moved upward, bringing it closer to the destination tracks.

For mixed problem instances, a car group in \(G_N\) is defined as a \textit{left-blocking} (LB) group if it has no groups to its left and the group to its right is in \(G_M\). A car group in \(G_N\) is defined as a \textit{middle-blocking} (MB) group if the groups both to its left and right are in \(G_M\). Additionally, if multiple MB groups appear on a track, the group with the highest position index is termed the \textit{switch-adjacent middle-blocking} (SA-MB) group. Figure \ref{Blocking} presents an example of these group types. We also define the \textit{neighbor track} of a given track \(i\) (\(0 \leq i \leq n\)) as track \(i-1\). For a given track with LB and SA-MB groups, we use the neighbor track to perform shuffle moves to move these blocking groups to dead-end positions. However, in cases where track \(i+1\) is empty and \(i-1\) is not empty, the neighbor track is instead set to track \(i+1\). Otherwise track $i-1$ remains as the neighbor track. 

\begin{figure}[htbp!]
\begin{center}
\includegraphics[scale=0.16]{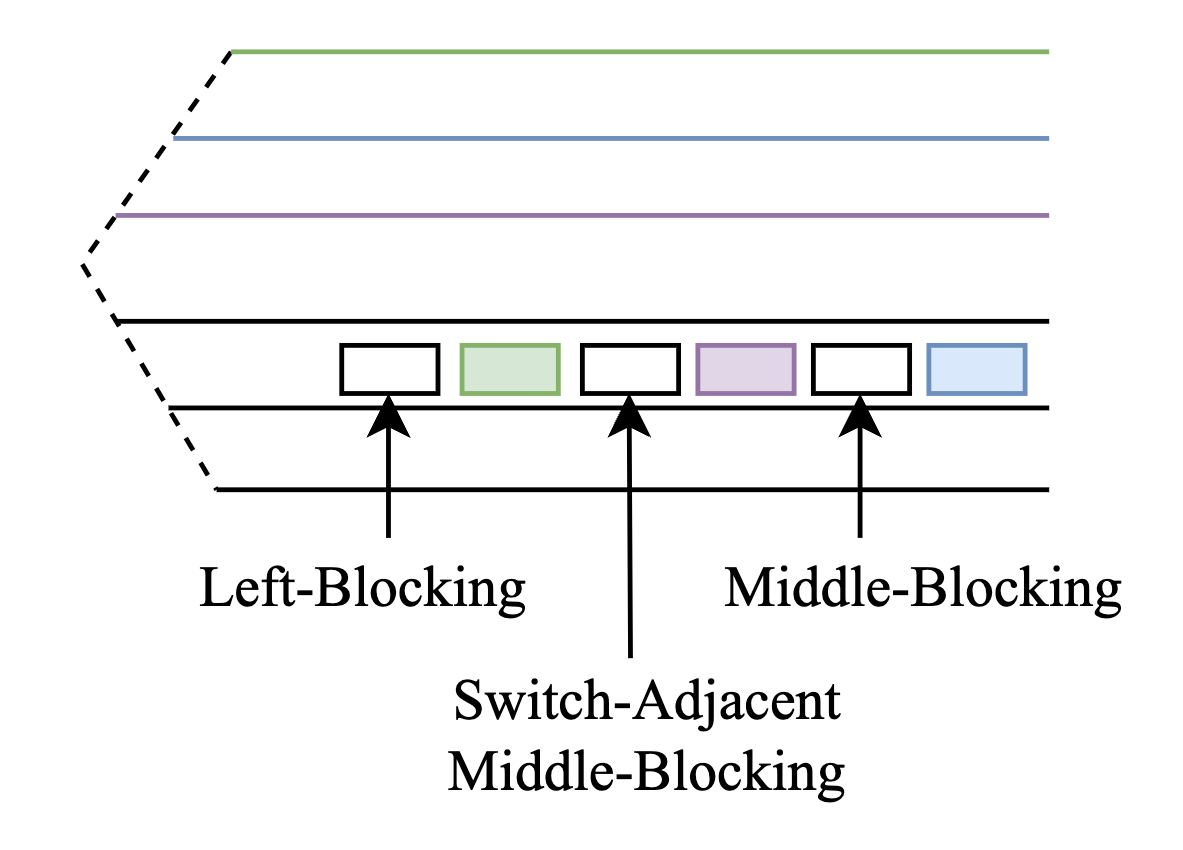}
\caption{Blocking type illustration.} \label{Blocking}
\end{center}
\end{figure}

The preprocessing stage for mixed problem instances aims to move LB and SA-MB groups to the dead end of the same track using shunting movements as outlined in Algorithm \ref{alg:mixed} in Appendix \ref{sec:algorithms}. These groups initially obstruct the movement of car groups in \(G_M\); by moving them to the dead end, they no longer block other groups and can be disregarded, thereby reducing the overall number of car groups in \(G_N\). For an initial state where \(|\mathcal{H}| = 1\) (i.e., only one track contains car groups), we first move the LB group to the neighbor track, and then transfer all groups that are to the left of the SA-MB group—including the SA-MB group itself—to the neighbor track. Finally, we delete all consecutive \textit{dead-end-positioned} \(g\in G_N\). Figure \ref{CBlocking} provides an example of these moves. In the preprocessing stage, we do not consider moving blocking groups beyond the LB and SA-MB groups because the number of steps needed to move such blocks to a dead-end position is greater than one move per blocking group (we therefore handle the resolution of such blocking groups in the ADP algorithm). 

\begin{figure}
\begin{center}
\includegraphics[scale=0.18]{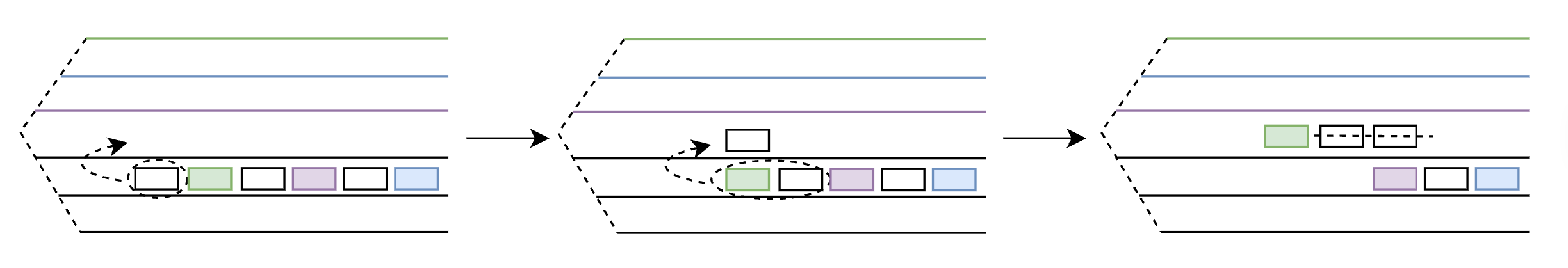}
\caption{Steps taken to clear LB and SA-MB groups when \(|\mathcal{H}| = 1\)} \label{CBlocking}
\end{center}
\end{figure}

For an initial state where \(|\mathcal{H}| > 1\) (i.e., multiple tracks contain car groups), we process tracks in reverse order, from bottom to top. For each $w$ from \(|\mathcal{H}|\) to \(2\), if there is an LB or SA-MB group on track $h_w$, we first move all groups—except any dead-end-positioned groups \(g \in G_N\)—to the neighbor track, leaving only the dead-end-positioned groups in $G_N$ on the current track. This initial movement does not yet remove any LB and/or SA-MB groups. Next, we successively move the LB group and SA-MB group back to track $h_w$, so that these are now dead-end-positioned groups in $G_N$. We apply this processes to tracks \( h_{|\mathcal{H}|}, h_{|\mathcal{H}|-1}, \ldots, h_2\). 

Next, if track \(h_1\) contains an 
LB and/or SA-MB group, we perform a set of additional shunting moves that depend on whether the track immediately below it (i.e., track \(h_1+1\)) is empty.
If track \(h_1+1\) is empty, any LB and/or SA-MB group on track \(h_1\) will be processed by first moving the LB group to track \(h_1+1\), and then moving all groups located to the left of any SA-MB group, including the SA-MB group to track \(h_1+1\). In this case, the LB and SA-MB groups originally on track \(h_1\) become dead-end-positioned groups on track \(h_1+1\).

When track \(h_1+1\) is not empty and we have incurred zero total preprocessing cost, we move all groups on \(h_1+1\), excluding dead-end-positioned groups \(g\in G_N\) to track \(h_1\). After this movement, track \(h_1\) must contain an SA-MB group. We then move all groups located to the left of any SA-MB group, including the SA-MB group to track \(h_1+1\). Finally, we delete all consecutive \textit{dead-end-positioned} groups \(g\in G_N\) on track \(h_1+1\). The detailed shunting movements to clear LB and SA-MB groups in mixed problem instances are presented in Algorithm \ref{alg:mixed} in Appendix \ref{sec:algorithms}.

\subsection{Graph construction and ADP}
The above preprocessing stage for the initial state $s_0$ of mixed and non-mixed problem instances results in an updated initial state $s_0'$. This updated state incurs a preprocessing cost and serves as the input for the graph construction stage. Instead of constructing the full network \(G(V, E)\) described in Section \ref{sec:Graph}, which includes all possible states reachable from \(s_0'\), the graph construction process focuses on building a reduced directed subnetwork \(\hat{G}(\hat{V}, \hat{E})\), where \(\hat{V} \subset V\) and \(\hat{E} \subset E\). Algorithm \ref{alg:ARG} in Appendix \ref{sec:algorithms} presents the graph construction and ADP processes. 

As is described in the algorithm, the subnetwork is constructed iteratively, starting from the initial state \(s_0'\). At each iteration, all new states reachable from the current state are generated. If a state contains consecutive groups sharing the same destination, these groups are merged into a single group. Next, we compare the average distance-to-go of each new state with that of its parent state. A new state is added to the subnetwork if and only if its average distance-to-go is lower than that of its parent. This procedure is applied iteratively to every newly generated state, thereby limiting the total number of states and resulting in a directed partial network that includes the final state among its nodes. 

Based on the constructed subnetwork \(\hat{G}(\hat{V}, \hat{E})\), a shortest path algorithm is applied to compute the minimum cost from the updated initial state \(s_0'\) to a final state. The total cost associated with the RSP solution is the sum of the preprocessing cost and the shortest path cost obtained from the subnetwork, which represents the overall cost of transitioning from the initial state $s_0$ to a final state.

We next discuss the results of a set of computational tests comparing the solution of the RSP using the ARG-DP heuristic with its MIP solution via a commercial solver. 

\section{Computational Results}
\label{sec:results}
We conducted a series of computational experiments to test the performance of the proposed ARG-DP algorithm compared to the MIP model, and provided insights on the RSP.  All tests were performed on a PC equipped with a 3.10 GHz Dual-Core Intel Core i5 processor and 8 GB of RAM. All MIP models of the RSP were solved by Gurobi Optimizer 10.0.1, implemented in Python of version 3.9.13. 

We considered both mixed and non-mixed problem instances to assess whether the presence of groups \( g \in G_N \) affects solution quality. A total of 60 test cases were randomly generated, with 30 classified as mixed problem instances and the remaining 30 as non-mixed.
Table \ref{tab:experimentparameters} presents the parameters for these cases, where the discrete uniform distribution is denoted by $\mathcal{U}(l, u)$  with support over the integers \(l, l + 1, \dots, u\). 

For mixed instances, the total number of car groups consists of car groups \( g \) such that \( g \in G_M \) or \( g \in G_N \). These cases are generated by randomly assigning each group to either \( G_M \) or \( G_N \).  The destination segment of each car group \( g \), where \( g \in G_M \), is randomly assigned a value from \( \mathcal{U}(0, |K_D| - 1) \). The destination segment of each car group \( g \), where \( g \in G_N \), is not unique and can be any value within the range \( [|K_C|, |K| - 1] \).  In contrast, for non-mixed problem instances, all car groups \( g \) belong to \( G_M \), and their destination segments are uniquely assigned a value from \( \mathcal{U}(0, |K_D| - 1) \). The initial positions of these car groups on the track are randomly assigned to one of the classification tracks. 

\begin{table}[htbp]
    \centering
    \footnotesize
    \caption{Parameter settings for computational experiments}
    \label{tab:experimentparameters}  
    \begin{tabular}{@{}p{9cm} p{2cm} p{4cm}@{}}
    
        \toprule
        \textbf{Parameter} & \textbf{Symbol} & \textbf{Value} \\ 
        \midrule
        Number of total tracks & $|K|$ & $\mathcal{U}(4, 10)$ \\
        Number of total departure tracks & $|K_D|$ & $\mathcal{U}(2, \min(|K| - 1, 4))$ \\
        Number of total classification tracks & $|K_C |$ & $|K| - |K_D|$ \\
        Number of total car groups & $|G|$ & $\mathcal{U}(2, 9)$ \\
        Number of total car groups for $G_N$ for non-mixed instances  & $|G_N|$ & $0$ \\
        Number of total car groups for $G_N$ for mixed instances  & $|G_N|$ & $\mathcal{U}(1, 3)$ \\
        Number of total car groups for $G_M$ & $|G_M|$ & $|G| - |G_N|$\\
        Length of car group $g$, $\forall g \in G$ & $L_g$ & 1 \\
        Length of track segment $i$, $\forall i \in K$ & $\mathcal{L}_i$ & $|G|$ \\
        Cost of moving car groups from track $i$ to $j$, $\forall i,j \in K, j \neq i$& $c_{ij}$ & $|i-j|$ \\
        Destination segment of car group $g$, $\forall g \in G$ (non-mixed instances) or $ g \in G_M \text{ (mixed instances)}$ & $d(g)$&$\mathcal{U}(0, (|K_D| - 1))$\\
        Destination segment of mixed instances' car group $g$, $\forall g \in G_N$& $d(g)$& \( [|K_C|, |K| - 1] \)\\
        Track pair difference & $\delta$ & 2\\
        \bottomrule
    \end{tabular}
\end{table}

To evaluate the performance of the proposed ARG-DP algorithm compared to the MIP model, we compute the optimality gap as the difference between the ARG-DP cost and the optimal cost, divided by the optimal cost. The \emph{state reduction} is calculated as 
\( \frac{|V| - |\hat{V}|}{|V|}\times 100\% \), which represents the percentage decrease in the number of states when employing the ARG-DP algorithm compared to the exact DP model. The time ratio equals the MIP running time divided by the corresponding ARG-DP running time. Note that the heuristic shown in Algorithm \ref{alg:Theuristic} takes under 0.01 seconds to determine the time horizon for each case; therefore, the running time of the MIP model does not include the computational time for this heuristic.

Table \ref{tab:states_reduction_results} summarizes the average values of \(|V|\), \(|\hat{V}|\), and state reduction percentage for mixed and non-mixed problem instances specifically. The value of \(|V|\) is calculated directly using Lemma \ref{lem:DPstates}, based on the number of car groups and tracks provided in $s_0$, and \(|\hat{V}|\) is the ADP network size. On average, the full state network contains over 5.8 million states, highlighting the computational complexity of the original RSP instances. The ARG-DP algorithm significantly reduces the network size, yielding an average of 2,290 states across all 60 instances, corresponding to an overall state reduction of 97.92\%. Specifically, the reduction reaches 99.34\% for mixed instances on average and 96.50\% for non-mixed instances on average, demonstrating the effectiveness of the ARG-DP algorithm in reducing the state space.

\begin{table}[htbp]
    \centering 
    \footnotesize 
    \caption{State reduction results of the ARG-DP heuristic}
    \label{tab:states_reduction_results}
    \begin{tabular}{lcccc}
        \toprule
        Test Instances & \(|V|\) & \(|\hat{V}|\) & State Reduction (\%)\\
        \midrule
        Mixed & 7,419,262.73 & 3,160.97 & 99.34 \\
        Non-Mixed & 4,207,118.47 & 1,419.03 & 96.50 \\
        \cmidrule(lr){1-4}
        \textbf{Overall Avg.} & 5,813,190.60 & 2,290.00 & 97.92 \\ 
        \bottomrule
    \end{tabular}
\end{table}

Table \ref{tab:results} summarizes the average values for the objective function, running time (in seconds), time ratio, percentage of problems solved optimally (\% Opt.), and optimality gap (\%). Additionally, the table presents the maximum running times for both the MIP model and the ARG-DP algorithm, along with the maximum optimality gap specifically for the ARG-DP algorithm. Note that for the ARG-DP algorithm, the reported objective function value represents the sum of the preprocessing cost and the shortest path cost.  For full details on all 60 test instances, please refer to Appendix \ref{sec:Computational_Details}. As Table \ref{tab:results} shows, the maximum running time for mixed problem instances is 4774.88 seconds using the MIP model, compared to only 203.26 seconds with the ARG-DP algorithm. Similarly, for non-mixed problem instances, the MIP model takes up to 1130.67 seconds, whereas the ARG-DP algorithm requires a maximum of 472.21 seconds. Figure \ref{Performance} illustrates the results for mixed and non-mixed problem instances specifically. As the figure shows, the ARG-DP algorithm provides solutions in extremely short running times compared with the MIP solutions via Gurobi. Specifically, solving the MIP problems takes nearly 500 seconds for the mixed instances and 200 seconds for the non-mixed instances on average, whereas the ARG-DP algorithm solves both in under a minute. Table \ref{tab:results} further highlights the advantages of the ARG-DP algorithm in terms of solution quality. Among all 60 instances tested, the ARG-DP heuristic found an optimal solution in 60\% of the cases, and provided an average optimality gap of 5.05\% for non-mixed problem instances and 8.24\% for mixed instances. Note that the objective function values take relatively small integer values, with an average value of about seven track moves (i.e., moves of groups between tracks). Thus, an integer solution that deviates from optimality by one unit results in about a 14.3\% optimality gap. We next present the results of application of our solution approaches using the layout of an actual railyard in North Portugal. 

\begin{table}[htbp]
    \centering 
    \footnotesize 
    \caption{MIP and ARG-DP heuristic results}
    \label{tab:results}
   \begin{tabular}{l*{10}{c}}
        \toprule
        & \multicolumn{4}{c}{MIP} & \multicolumn{6}{c}{ARG-DP} \\
        \cmidrule(lr){2-5} \cmidrule(lr){6-10}
         &  &      &       &  Max  & &   &    &  Opt.   & Max    & Max \\
         &  & Time & Time  & Time & & Time & \% & Gap & Gap & Time   \\ 
        Instances & Obj. & (sec)  & Ratio &  (sec) & Obj. & (sec) & Opt. & (\%) & (\%) & (sec)  \\
        \midrule
        Mixed & 6.97 & 485.83 &160.12 &4,774.88 & 7.53 & 16.70 & 56.67  & 8.24 & 33.33 & 203.26  \\
        Non-Mixed & 7.07 & 194.12 & 549.98 &1,130.67& 7.47 & 31.13 & 63.33  & 5.05 &28.57 &472.21 \\
        \midrule
        \textbf{Average} & 7.02 & 339.98 & 355.05 & & 7.50  &23.92 & 60.00  & 6.65  \\ 
        \bottomrule
    \end{tabular}
\end{table}

\begin{figure}[htbp!]
\begin{center}
\includegraphics[scale=0.6]{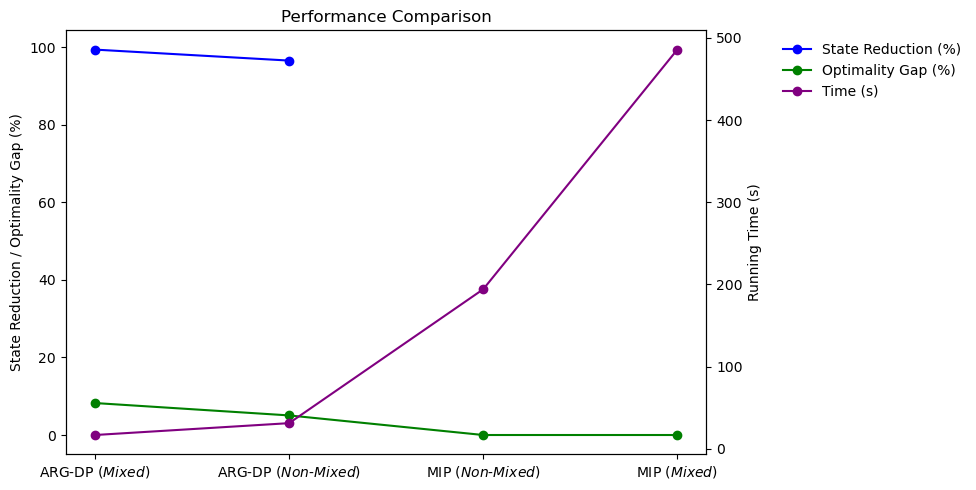}
\caption{Average performance comparison for the 60 cases} 
\label{Performance}
\end{center}
\end{figure}

\subsection{Mini case study}
\label{sec:casestudy}
To gauge the performance of our solution methods on a practical problem, we conducted a mini case study using the structure of the Gaia flat-shunted yard in North Portugal \autocite{Gaiayard}. The Gaia yard includes distinct areas for arrival, departure, shunting, and temporary storage of freight cars. In particular, the shunting zone includes four electrified tracks and ten non-electrified tracks; a locomotive is used for shunting movements on these non-electrified tracks. A single shunting crew with one locomotive performs all yard movements, making efficient shunting tasks critical. In addition, four tracks serve as arrival and departure tracks.

We generated a total of 10 test cases, with five classified as mixed problem instances (Gaia-mixed) and the remaining five as non-mixed problem instances (Gaia-non-mixed). Unlike the previous 60 test cases, this study fixed the number of classification and departure tracks based on the Gaia yard structure depicted by \textcite{Gaiayard}, containing ten classification tracks and four departure tracks. As shown in Figure \ref{Gaia}, Tracks 0 to 3 are designated as departure tracks, while Tracks 4 to 13 serve as classification tracks. All other parameters were generated as shown in Table \ref{tab:experimentparameters}.

\begin{figure}[htbp!]
\centering
\includegraphics[scale=0.22]{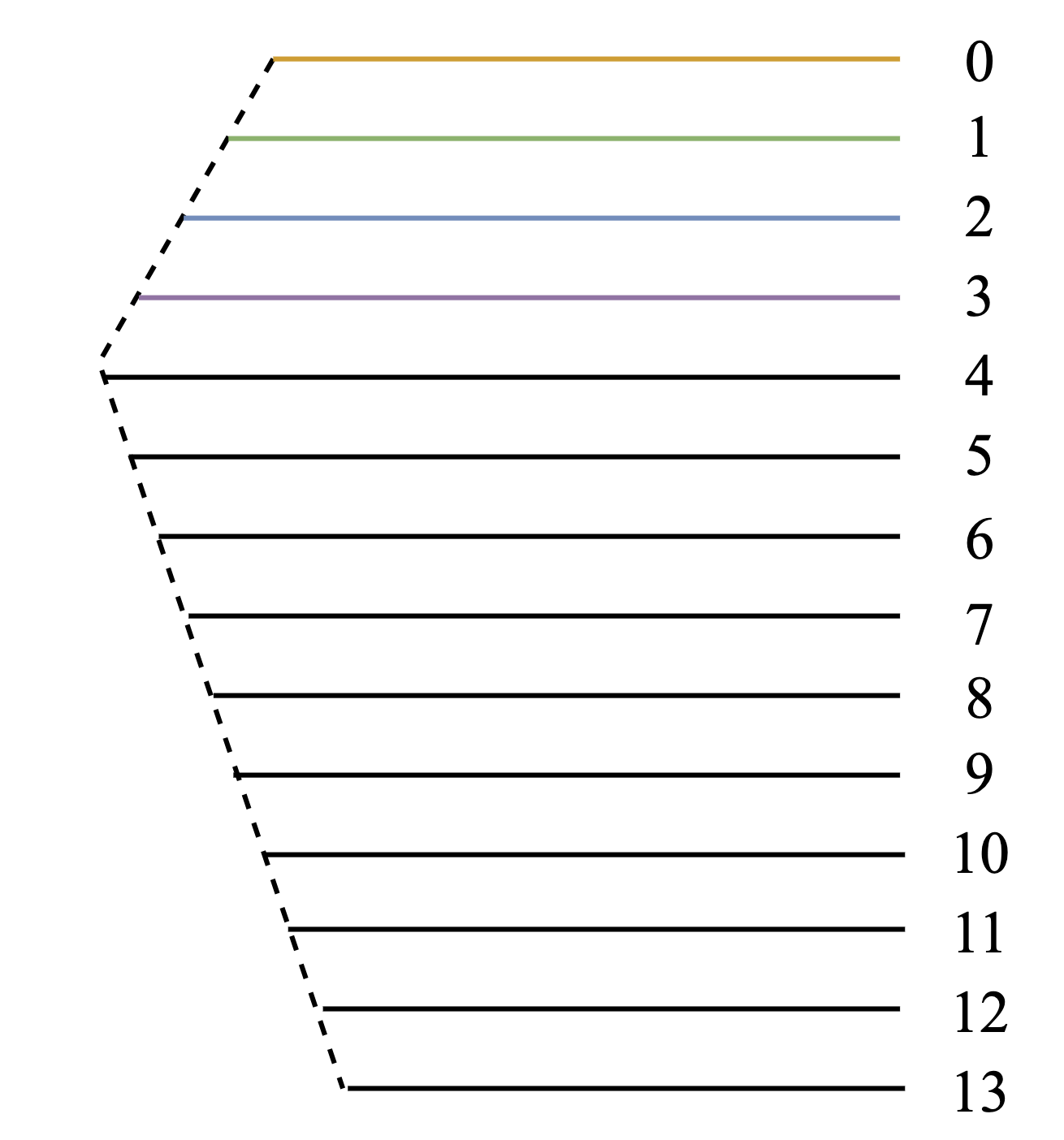}
\caption{Structure of the Gaia flat-shunted yard} 
\label{Gaia}
\end{figure}

The results are presented in Tables \ref{tab:states_reduction_results_gaia} and \ref{tab:case_results}. Table \ref{tab:states_reduction_results_gaia} reports the original state space size \(|V|\), computed based on the number of car groups and tracks in each test case, along with the reduced subnetwork size \(|\hat{V}|\) obtained from the ADP network and the corresponding state reduction percentage. Across all 10 tested cases, the ARG-DP algorithm achieves a significant state space reduction, averaging 99.57\%, with reductions of 99.26\% for Gaia-mixed cases and 99.88\% for Gaia-non-mixed cases. These results demonstrate the effectiveness of the ARG-DP algorithm in reducing the state space.

\begin{table}[htbp]
    \centering 
    \footnotesize 
    \caption{State reduction results for the Gaia flat-shunted yard}
    \label{tab:states_reduction_results_gaia}
   \begin{tabular}{lcccc}
        \toprule
        Test Cases & \(|V|\) & \(|\hat{V}|\) & State Reduction (\%)\\
        \midrule
        Gaia-Mixed & 4,330,277.00 & 2,057.00 & 99.26 \\
        Gaia-Non-Mixed & 37,745,616,364.00 & 8,251.60 & 99.88 \\
        \cmidrule(lr){1-4}
        \textbf{Overall Avg.} & 18,874,973,320.50 & 5,154.30 & 99.57 \\ 
        \bottomrule
    \end{tabular}
\end{table}

Table \ref{tab:case_results} presents the average corresponding objective values, running times, time ratios, percentage of problems solved optimally (\% Opt.), and optimality gaps. In addition, the maximum running times for both the MIP model and the ARG-DP algorithm, along with the maximum optimality gap specifically for the ARG-DP algorithm are presented. For full details on all 10 test cases, please refer to Appendix \ref{sec:Computational_Details}. For the Gaia-mixed cases, the ARG-DP algorithm completes in 31.34 seconds on average, while the MIP solver takes 1342.07 seconds on average. The ARG-DP algorithm obtains solutions 280.56 times faster than the MIP solver on average. In the Gaia-non-mixed cases, the average running times for both the MIP and ARG-DP algorithm are longer than those for the Gaia-mixed cases. This is because the average number of states for the Gaia-non-mixed test cases is greater than that for the Gaia-mixed cases. As shown in Table \ref{tab:states_reduction_results_gaia}, the average reduced subnetwork size for Gaia-non-mixed cases is 8251.60, which is over four times larger than that for Gaia-mixed cases. Across all 10 tested instances, the ARG-DP algorithm achieves an average running time of 227.27 seconds, compared to 8948.37 seconds for the MIP solver, demonstrating significantly faster performance. At the same time, the ARG-DP heuristic solved half of the problems to optimality and resulted in an average optimality gap of 6.90\%. These results allow us to conclude that the ARG-DP algorithm becomes very promising for quickly solving real-world problem instances.

\begin{table}[htbp]
    \centering 
    \footnotesize 
    \caption{Results for the Gaia flat-shunted yard}
    \label{tab:case_results}
    \begin{tabular}{l*{10}{c}}
        \toprule
        & \multicolumn{4}{c}{MIP} & \multicolumn{6}{c}{ARG-DP} \\
        \cmidrule(lr){2-5} \cmidrule(lr){6-11}
        &  &  &   &Max & & &  &Opt. &Max &Max  \\ 
        &  & Time & Time  &Time & & Time & \% & Gap &Gap &Time  \\ 
        Instances & Obj. & (sec)  & Ratio & (sec) & Obj. & (sec) & Opt.  & (\%) & (\%)& (sec)\\
        \midrule
        Gaia-Mixed     & 10.00  & 1,342.07  & 280.56  & 6,477.66  & 10.60  & 31.34 & 60.00 & 4.87   & 16.67  & 128.92  \\
        Gaia-Non-Mixed & 12.00  & 16,554.68 & 177.45  & 50,932.42  & 13.20  & 423.19 & 40.00 & 8.93   & 25.00  & 1,208.01 \\
        \midrule
        \textbf{Average} & 11.00  & 8,948.38  & 229.01 &   & 11.90  & 227.27 & 50.00 & 6.90   &   &   \\
        \bottomrule
    \end{tabular}
\end{table}

\section{Conclusion and Future Research}
\label{sec:conclusion}
This paper presents a decision model for railcar movement in yards with one-sided track access, which operate according to a LIFO property (similar to a stack), addressing problems that arise in freight railyards at manufacturing facilities. We propose both MIP and DP models for minimizing shunting costs. Rather than handling individual railcars, our models manage car groups—sets of consecutive railcars sharing the same destination—as unified entities. Both models allow for simultaneous movement of multiple car groups, enhancing operational efficiency. A weighted directed graph is constructed for solving the DP model, with each vertex representing a particular state. Each edge indicates a valid transition between states and is weighted by its associated shunting cost. The total number of states depends on the number of car groups and tracks. We characterize how the DP network size decreases when the number of car groups and tracks decrease. The complexity of the RSP is formally established by demonstrating the equivalence of a special case to the ZSSS problem and an equivalent $k$-coloring problem. Given this complexity and the need to reduce the number of network nodes, we introduce the ARG-DP heuristic. This heuristic considers state characteristics (including as mixed versus non-mixed cases) during a set of preprocessing steps, applies dynamic grouping strategies with merging techniques to reduce the number of car groups during graph construction, and further utilizes a reduced distance-to-go function to construct the graph. Based on the constructed graph, we find the shortest path (shunting cost) from the initial state to the final state. Computational experiments illustrated the efficiency of the ARG-DP heuristic. Across 60 simulated test instances, the ARG-DP heuristic produced solutions on average \(3.55 \times 10^2\) times faster than the proposed MIP model, while maintaining an average optimality gap of 6.65\%.  Furthermore, a mini case study was conducted using the operational structure of the Gaia flat-shunted yard in Northern Portugal, which includes four tracks serving as both arrival and departure tracks, and ten classification tracks used for shunting operations. In this yard, a single shunting crew with one locomotive handles all yard movements.
In this case study, five mixed and five non-mixed problem instances were generated. The ARG-DP heuristic obtained results \(2.29 \times 10^2\) times faster than the MIP method, maintaining an average optimality gap of 6.90\%. For all 70 test instances, the average state reduction exceeds 95\%, primarily due to the use of merging techniques and the reduced distance-to-go function, both of which effectively limit the graph size. 

Future work may extend the RSP to settings where departure tracks are not predetermined, allowing all tracks to serve as both classification and departure tracks, with an objective of consolidating railcar groups of the same destination onto any single track. Another direction is to optimize shunting operations on two-sided access tracks with FIFO queue structures, where groups are added at one end and removed from the other. Furthermore, the MIP and DP model of the RSP can be extended to include multiple locomotives, requiring coordinated shunting operations within the yard. 

\printbibliography

\newpage
\noindent {\Large \textbf{Online Supplementary Material}}
\begin{appendices}
\section{Proofs of Lemmas and Corollaries} 
\label{sec:AppendixA}
\begin{proof}[\unskip\nopunct] \noindent \textbf{Proof of Lemma \ref{lem:positionChange}.} Consider a group $g$ that is among the $N_{kt}^-$ groups moving from track segment $k$ to segment $l$ in period $t$. Because exactly $N_{kt}^-$ groups leave segment $k$ in period $t$, these must be the $N_{kt}^-$ groups occupying the highest position indices on segment $k$. In particular,
\[
P_{g,t-1} > N_{k,t-1} - N_{kt}^-.
\]
The new position of group $g$ on segment $l$ at time $t$ is therefore
\[
P_{g,t} = N_{l,t-1} \;+\; P_{g,t-1} \;-\; 
\bigl(N_{k,t-1} \;-\; N_{kt}^-\bigr).
\]
Hence,
\[
P_{g,t} - P_{g,t-1} = N_{l,t-1} \;-\; \bigl(
    N_{k,t-1} \;-\; N_{kt}^-\bigr),
\]
which simplifies to 
\[
\Delta_{gt} = N_{l,t-1} \;-\; N_{k,t-1} \;+\; N_{kt}^-,
\]
thereby completing the proof.
\end{proof}

\begin{proof}[\unskip\nopunct] \noindent \textbf{Proof of Lemma \ref{lem:DPstates}}. We count the number of states in two independent steps. First, since the $\mathcal{G}$ car groups are distinct, they can be arranged in a sequence in $\mathcal{G}!$ different ways. Second, once the car groups are ordered, we partition the sequence into $|K|$ segments (each representing a track) by inserting $|K|-1$ dividers. Let $x_i\ge0$ denote the number of car groups assigned to track $i$, for $i=1,2,\dots,|K|$, so that
\begin{align}
x_1+x_2+\cdots+x_{|K|}=\mathcal{G}.
\end{align}
The number of solutions to this equation (i.e., the number of ways to distribute $\mathcal{G}$ car groups into $|K|$ tracks allowing for empty tracks) is given by 

\[
\left(\!\!\binom{|K|}{\mathcal{G}}\!\!\right),
\]
where, according to Identity 143 in \textcite{benjamin2022proofs}, 
\[
\left(\!\!\binom{|K|}{\mathcal{G}}\!\!\right)=\binom{\mathcal{G}+|K|-1}{\mathcal{G}}.
\]
By the symmetry of binomial coefficients, 
\[
\binom{\mathcal{G}+|K|-1}{\mathcal{G}}=\binom{\mathcal{G}+|K|-1}{|K|-1}.
\]
Thus, the number of ways to partition the ordered sequence is 
\[
\binom{\mathcal{G}+|K|-1}{|K|-1}.
\]
Since the ordering of the car groups and the partitioning into tracks are independent, the total number of possible states is given by
\[
|V_0|=\mathcal{G}!\times\binom{\mathcal{G}+|K|-1}{|K|-1}.
\]
Note that the above total number of possible states \(|V_0|\) contains duplicate final states. We next provide a method 
to eliminate these duplications. For any state equivalent to the final state, car groups $g \in G_N$ may be arranged in any order on the $|K_C|$ classification tracks. The number of ways to arrange $|G_N|$ car groups on $|K_C|$ classification tracks is 
\[
|V_1|=|G_N|!\times \binom{|G_N|+|K_C|-1}{|K_C|-1}.
\]
For car groups \( g \in G_M \), each group must be assigned to its corresponding departure track based on its destination. Considering the relative order of car groups in each destination track, the number of distinct arrangements is  
\[
|V_2| = \prod_{d \in K_D} n(d)!.
\]
Since $|V_1|$ and $|V_2|$ are independent, the total number of duplicate final states is $|V_1|\times|V_2|-1$. Therefore, by eliminating duplicate final states, the total number of states equals
\[
|V|=|V_0| - \{|V_1|\times|V_2| - 1\}
=
\mathcal{G}! \times \binom{\mathcal{G} + |K| - 1}{|K| - 1}-\left\{|G_N|! \times
\binom{|G_N|+|K_C|-1}{|K_C|-1} \times \prod_{d \in K_D}n(d)!-1\right\}.
\]  
\end{proof}

\begin{proof}[\unskip\nopunct] \noindent \textbf{Proof of Corollary \ref{cor:decreasebygroup_track}.} Clearly, from the proof of Lemma \ref{lem:DPstates} above, when we have $\mathcal{G}_{pq}^-$ groups and $\mathcal{K}_r^-$ tracks, we have 
\begin{align*}
|V^{(p,q,r)}|=\mathcal{G}_{pq}^-! \times \binom{\mathcal{G}_{pq}^-+\mathcal{K}_r^--1}{\mathcal{K}_r^--1} -\left\{ \mathcal{G}_{N,p}^-! \times
\binom{\mathcal{G}_{N,p}^-+\mathcal{K}_{C,r}^--1}{\mathcal{K}_{C,r}^--1} \times\prod_{d\in K_D}n_q^-(d)!-1\right\}
\end{align*}
Noting that 
\[
\mathcal{G}!\times\binom{\mathcal{G}+|K|-1}{|K|-1}=(\mathcal{G}+|K|-1)(\mathcal{G}+|K|-2)\times \cdots \times |K|=\prod\nolimits_{j=1}^{\mathcal{G}}(\mathcal{G}+|K|-j),
\]
we have
\[
\frac{|V^{(p,q,r)}|}{|V|}=\frac{
\prod\limits_{j=1}^{\mathcal{G}_{pq}^-}(\mathcal{G}_{pq}^-+\mathcal{K}_r^--j)-
\left\{\prod\limits_{i=1}^{\mathcal{G}_{N,p}^-}(\mathcal{G}_{N,p}^-+\mathcal{K}_{C,r}^--i)
\prod\limits_{d\in K_D} n_q^-(d)!-1\right\}}
{\prod\limits_{j=1}^{\mathcal{G}}(\mathcal{G}+|K|-j)-\left\{\prod\limits_{i=1}^{|G_N|}(|G_N|+|K_C|-i)\prod\limits_{d\in K_D} n(d)!-1\right\}}.
\]
\end{proof}

\section{Algorithms}\label{sec:algorithms}
\begin{algorithm}[H]
\caption{Heuristic algorithm to find $\mathcal{T}$}\label{alg:Theuristic}
\begin{algorithmic}[1]
\State Initialize the number of shunting moves \(s \gets 0\).
\State \textbf{Initial preprocessing:}

\medskip
\State \textit{Check for pairs in \(\mathcal{P}\):}
\For{\(w \in \text{range}(|\mathcal{H}|, 1, -1)\)} 
    \State Let \(j \gets h_w.\)
    \State Let \(i = \{\max k: (k,j) \in \mathcal{P}\}.\) 
    \If{such \(i\) exists}
        \State \multiline{Execute a shunting move to merge the switch-end-positioned group on track \(j\) with that on track \(i\).}
        \State \(s \gets s + 1.\)
    \EndIf
\EndFor

\medskip
\State \textit{Move groups sequentially upward:}
\For{\(w \in \text{range}(|\mathcal{H}|, 1, -1)\)} 
    \State Identify all groups on track \(h_w\), excluding dead-end-positioned groups \(g\in G_N.\) 
    \State Execute a shunting move for the identified groups from \(h_w\) to track \(h_{w-1}\).
    \State \(s \gets s + 1.\)
\EndFor

\medskip
\State \textit{Clear any \textit{switch-end-positioned} group \(g\in G_N\) from Track \(h_1\):}
\If{a switch-end-positioned group \(g \in G_N\) exists on track \(h_1\)}
    \State Execute a shunting move to move \(g\) to track \(h_1+1\).
    \State \(s \gets s + 1.\)
\EndIf

\medskip
\State \textit{Clear any \textit\textit{middle-positioned} groups \(g\in G_N\) from Track \(h_1\):}
\For{each middle-positioned group \( g\in G_N\)}
    \State Execute two shunting moves to clear it from track \(h_1\).
    \State \(s \gets s + 2.\)
\EndFor

\medskip
\State \textbf{Adaptively drop off car groups on destination tracks:}
\For{each dead-end-positioned group \(g\)}
    \State Execute a shunting move to drop off group \(g\) at track \(d(g)\).
    \State \(s \gets s + 1.\)
\EndFor

\medskip
\State \textbf{Time horizon calculation:}
\State \(\mathcal{T} \gets s\).
\end{algorithmic}
\end{algorithm}

\begin{algorithm}[H]
\caption{Preprocessing for non-mixed problem instances}\label{alg:nonmixed}
\begin{algorithmic}[1]
\State Initialize preprocessing cost \(P_c \gets 0\).
\medskip
\State \textbf{Check for \(\mathcal{P}\) pairs:}
\For{\(w \in \text{range}(|\mathcal{H}|, 1, -1)\)} 
    \State Let \(j \gets h_w.\)
    \State Let \(i = \{\max k: (k,j) \in \mathcal{P}\}.\) 
    \If{such \(i\) exists}
        \State \multiline{Execute a shunting move to merge the switch-end-positioned group on track \(j\) with that on track \(i\).}
        \State Update preprocessing cost: \(P_c \gets P_c + c_{ij}.\)
    \EndIf
\EndFor

\medskip
\State \textbf{Check for \(\mathcal{Q}\) pairs:}
\If{\(P_c = 0\)}
    \For{\(w = 1\) to \(|\mathcal{H}|-1\)}
    \State Let \(i \gets h_w.\)
    \If{\((i, i+1) \in \mathcal{Q}\)}
        \State \multiline{Execute a shunting move to move all groups on track \(i\) to track \(i+1\).}
        \State Update preprocessing cost: \(P_c \gets P_c + c_{i,i+1}.\)
    \EndIf
\EndFor
\EndIf

\medskip
\State \textbf{Move groups upward:}
\For{\(w \in \text{range}(|\mathcal{H}|, 1, -1)\)} 
    \State Identify all groups on track \(h_w\).
    \State Move these groups from track \(h_w\) to track \(h_{w-1}\).
    \State Update preprocessing cost: \(P_c \gets P_c + c_{h_w,h_{w-1}}\).
\EndFor
\If{track \(h_1 \ne \underline{i}_C\)}
    \State Move all groups on track \(h_1\) to track $\underline{i}_C$.
    \State \(P_c \gets P_c + c_{h_1, \underline{i}_C}\).
\EndIf

\medskip
\State \textbf{Delete empty tracks:}\\
Delete empty tracks with an index greater than \(\underline{i}_C+1\).

\end{algorithmic}
\end{algorithm}

\begin{algorithm}[H]
\caption{Preprocessing for mixed problem instances}\label{alg:mixed}
\begin{algorithmic}[1]
\State Initialize preprocessing cost \(P_c \gets 0\).
\State Let \(h_w^n\) denote the neighbor track of \(h_w\) for \(w = 1, 2, \ldots, |\mathcal{H}|\).
\medskip
\If{\(|\mathcal{H}| = 1\)}
    \State Move the LB group on track \(h_1\) to the neighbor track \(h_1^n\).
    \State \(P_c \gets P_c + c_{h_1,h_1^n}\).
    \State \multiline{Move all groups located to the left of the SA-MB group, including the SA-MB group itself, to the neighbor track.}
    \State \(P_c \gets P_c + c_{h_1,h_1^n}\).
    \State Delete all consecutive dead-end-positioned groups in \(G_N\) on the neighbor track \(h_1^n\).
\Else
    \For{\(w \in \text{range}(|\mathcal{H}|, 1, -1)\)}
        \If{an LB or SA-MB group exists on track \(h_w\)}
            \State Identify all groups on \(h_w\), excluding dead-end-positioned groups \(g\in G_N\).
            \State Move the identified groups from track \(h_w\) to track \(h_w^n\).
            \State \(P_c \gets P_c + c_{h_w,h_w^n}\).
            \State Move the LB group from \(h_w^n\) to track \(h_w\).
            \State \(P_c \gets P_c + c_{h_w^n,h_w}\).
            \State \multiline{Move all groups located to the left of the SA-MB group, including the SA-MB group itself from track $h_w^n$ to track \(h_w\).}
            \State \(P_c \gets P_c + c_{h_w^n,h_w}\).
        \EndIf
    \EndFor
\EndIf
    \medskip 
    \If{\(w = 1\)}
        \If{an LB or SA-MB group exists on track \(h_1\)}
            \If{track \(h_1+1\) is empty}
                \State Move the LB group from track \(h_1\) to track \(h_1+1\).
                \State \(P_c \gets P_c + c_{h_1,h_1+1}\).
                \State \multiline{Move all groups located to the left of the SA-MB group, including the SA-MB group itself from track \(h_1\) to track \(h_1+1\).}
                \State \(P_c \gets P_c + c_{h_1,h_1+1}\).
            \NoThenElseIf{track \(h_1+1\) is not empty and \(P_c = 0\) \textbf{then}}
                \State Identify all groups on \(h_1+1\), excluding dead-end-positioned groups \(g\in G_N\).
                \State Move the identified groups to track \(h_1\). 
                \State \(P_c \gets P_c + c_{h_1+1,h_1}\).
                \If{an SA-MB group exists on track \(h_1\)}
                    \State \multiline{Move all groups located to the left of the SA-MB group, including the SA-MB group itself from the track \(h_1\) to track \(h_1+1\).}
                    \State \(P_c \gets P_c + c_{h_1,h_1+1}\).
                    \EndIf
            \EndIf
        \EndIf
    \EndIf
    \State Delete all consecutive dead-end-positioned groups in \(G_N\).
\end{algorithmic}
\end{algorithm}

\begin{algorithm}[H]
\caption{Graph construction and ADP}\label{alg:ARG}
\begin{algorithmic}[1]

\medskip
\State \textbf{Graph construction:}
\For{each vertex in the current graph}
    \State Generate all possible subsequent vertices.
    \If{a generated vertex is new (i.e., it has not previously appeared in the graph)}
        \If{two adjacent groups share the same destination}
            \State Merge these groups into a single group and retain only the merged state.
        \EndIf
        \If{the new vertex has a lower average distance-to-go than its parent state}
            \State Retain the new vertex.
        \Else
            \State Discard the new vertex.
        \EndIf
    \EndIf
    \State \multiline{Add a directed edge from the current vertex to each retained new vertex with the corresponding weight.}
\EndFor
\State Repeat the above steps until no new vertices can be generated.

\medskip
\State \textbf{ADP:}
\State Initialize a distance array \texttt{dist} with infinity, except that the source vertex is set to 0.
\For{each edge \((p,q)\) in the graph}
    \State \texttt{dist}[$q$] \(\gets \min\big(\texttt{dist}[q],\, \texttt{dist}[p] + \texttt{weight}(p,q)\big)\).
\EndFor
\State For vertex \(q\) corresponding to the final state, record the \texttt{dist}[$q$] as the shortest path cost.
\end{algorithmic}
\end{algorithm}

\section{Complexity Results}
\label{sec:AppendixNP}

\subsection{Equivalence of RSP-sc and Generalization of ZSSS Problem}\label{sec:AppNPeq}
Suppose that in the RSP-sc the given sequence \(\sigma_A\) on track $0$ can be sorted using classification tracks $1,...,k$ to form the desired departure sequence \(\sigma_D\) in decreasing order from the switch end to the dead end on track $k+1$, and then moved to the corresponding departure tracks at a cost equal to $nR$. According to the cost parameters defined, the sorting part involves 0 cost, implying that no groups moved from any track $1,...,k+1$ to any track $0,...,k$. Groups located on classification tracks $1,...,k$ are moved to the outbound track $k+1$ at zero cost. Finally, all groups located on the outbound track $k+1$ are then moved to the departure tracks $1,2,...,n$ successively and dropped off in the sequence $1,2,...,n$, incurring a total cost of $nR$. Thus, we conclude that if the given sequence on track $0$ can be sorted using the $k+1$ classification tracks to form the departure sequence at a cost of $nR$, then no groups were moved from tracks $1,...,k+1$ to any track $0,...,k$, and a zero-shuffle solution exists for the variant of stack sorting where multiple consecutive items can be moved together and no midnight constraint exists.
 
Next, suppose a zero-shuffle solution exists for stack sorting when multiple consecutive items can be moved together (and without the midnight constraint). This implies that the groups in sequence $\sigma_A$ can be sorted using the $k$ classification (sorting) tracks and merged onto the classification (sink) track $k+1$ in decreasing order. The sort and merge processes incur zero cost. To reach the final departure track, all groups located on track $k+1$ will be moved to their corresponding departure tracks sequentially at a cost of $nR$, for a total cost of $nR$. Therefore, if a zero-shuffle solution exists for stack sorting when multiple consecutive items can be moved together and with no midnight constraint, then the given sequence on track $0$ can be sorted and deposited on track $k+1$ to form the departure sequence at a cost of $nR$.

\subsection{Equivalence between the ZSSS problem and the RSP-sc}\label{sec:AppNPtrans}
\textcite{KLstacksorting} showed that approximating the minimum number of shuffles in the SSP problem with $k\ge 4$ stacks is $\mathcal{NP}$-hard. We next demonstrate a polynomial transformation from an instance of their problem to an instance of the RSP-sc.

In their statement of the problem, a random initial sequence $[a_1 \ a_2 \ \cdots \ a_n]$ of $n$ groups must be sorted using $k$ sorting stacks to obtain the departing sequence $[1 \ 2 \ \cdots \ n]$. A so-called \emph{conflict graph} is created that represents an equivalent instance of the $k$-coloring problem on a circle graph (the $k$-coloring problem seeks the minimum number of node colors such that no two nodes sharing an edge have the same color). A conflict exists between two items (and a corresponding edge between two nodes in the conflict graph) if they cannot be placed on the same stack without requiring a shuffle, while a zero-sort shuffling solution using $k$ stacks exists if and only if a $k$-coloring is possible on the corresponding conflict graph. For the ZSSS problem, a conflict exists in the initial sequence $[a_1 \ a_2 \ a_3 \ \cdots \ a_n]$ between $a_i$ and $a_{i+v}$ for $v \ge 1$ if and only if (i) $a_{i+v} > a_i$ and (ii) not all numbers less than $a_i$ appear somewhere in the sequence $[a_1\ a_2\ \cdots\ a_{i+v-1}]$. 

In contrast, for the RSP-sc problem, Appendix \ref{sec:AppNPconflict} shows that a conflict exists in the initial sequence $[a_1 \ a_2 \ a_3 \ \dots \ a_n]$ between two groups $a_i$ and $a_{i+v}$ (where $1 \leq i < i+v \leq n$) if and only if each of the four conditions shown in Table \ref{tab:conflict-conditions} holds simultaneously (Appendix \ref{sec:AppNPconflict} provides a detailed characterization of the conflict graph properties for the RSP). 

To clarify these conditions, we first define a \emph{consecutive decreasing subsequence} (CDS) of digits as any subsequence
$[a_i\ a_{i+1}\ \cdots\ a_{i+v}]$
such that for every \(j=1,\dots,v\), 
\[a_{i+j}=a_{i+j-1}-1.\]
For example, the subsequence $[8\ 7\ 6\ 5]$ is a CDS of length four, while any single digit is considered to be a CDS of length one.  As another example, the subsequence $[4\ 3\ 12\ 11\ 10]$ contains two consecutive CDSs, one of length two followed by one of length three. Observe that when multiple consecutive items can be moved together from the top of one stack to the top of another, a sequence of CDSs in which the first number in each sequence is increasing can be placed on a sorting stack without requiring a shuffle. In other words, for the sequence $[4\ 3\ 12\ 11\ 10]$, the subsequence $[4\ 3]$ can go to the sink using one move (after the sink contains the subsequence $[2\ 1]$), and the subsequence $[12\ 11\ 10]$ can later go to the sink using one move (after $[9\ 8\ \cdots\ 2\ 1]$ appears on the sink stack). 

\begin{table}[h!]
\centering
\caption{Conditions defining a conflict between $a_i$ and $a_{i+v}$ in the RSP-sc}
\label{tab:conflict-conditions}
\begin{tabular}{|c|p{13cm}|}
\hline
\textbf{Condition} & \textbf{Description}\\
\hline
(i) & $a_i < a_{i+v} - 1, 1< v\le n-1-i$. \\
\hline
(ii) & Not all numbers smaller than $a_i$ appear before $a_{i+v}$. \\
\hline
(iii) & The subsequence $[a_i\ a_{i+1}\ \cdots, a_{i+v}]$ in $\sigma_A$ does not consist of a sequence of CDSs (possibly of length 1) such that the first digit in each CDS is increasing. \\
\hline
(iv) & The subsequence $[a_i\ a_{i}+1\ \cdots\ a_{i+v}-1\ a_{i+v}]$ of $\sigma_A$ is not a sequence of CDSs (possibly of length 1) such that the first digit in each CDS is increasing. \\
\hline
\end{tabular}

\end{table}

Conditions (i) and (ii) in Table \ref{tab:conflict-conditions} represent a slight modification of the conflict conditions required for the ZSSS problem by \textcite{KLstacksorting}, in which Condition (i) becomes $a_i<a_{i+v}$ and Condition (ii) is the same. (Condition (i) must change because, as is shown in Appendix \ref{sec:AppNPconflict}, a number $i$ cannot conflict with $i-1$ or $i+1$ in the RSP-sc, which creates the need for Steps 1 - 4 in the following transformation procedure.)

Given an instance of the ZSSS problem with initial sequence $\sigma_A$, we transform the instance to an equivalent instance of the RSP-sc using the following steps, after initializing $i=1$, $\lambda=0$, and $V=\emptyset$.  

\begin{enumerate}[Step 1.]
\item Find all pairs of consecutive digits $(j,j+1)$ such that there is a conflict in $\sigma_A$ for the ZSSS problem between the two numbers (implying that $j+1$ appears after $j$ and not all digits smaller than $j$ appear before $j+1$ in $\sigma_A$). Denote the lowest valued pair as \( (j, j+1)\) and the highest number in the sequence as $m$.

\item Add 1 to each number from \(j+1\) to $m$. Add \(j+1\) to the set $V$.

\item Repeat Steps 1 and 2 until no pair of consecutive digits exists with a conflict. 


\item Insert each item in $V$ immediately after the last digit in the sequence less than or equal to that number.

\item Find the smallest index \( j > 1 \) such that there is a conflict between \( a_i \) and \( a_{i+j} \) in the ZSSS problem instance. If there is a conflict between \( a_i \) and \( a_{i+j} \), set \( \lambda \leftarrow \lambda+1 \); if no such conflict exists, let \( i \leftarrow i+1 \) and repeat. 

\item Increase each number greater than or equal to $\lambda$ by 1, and place a dummy group immediately before group \( a_{i+j} \).

\item If $i=n-2$ continue to Step 9.  Otherwise let $i\leftarrow i+1$ and return to Step 6.  

\item Number the dummy groups sequentially from \( 1 \) to \( \lambda \) from left to right in the final sequence.

\item If a dummy group number $k$ was inserted to create a conflict between $a_i$ and $a_{i+j}$, and $[a_i\ \cdots\ a_{i+j}]$ forms a sequence of CDSs whose first number is increasing, increase all numbers in the series that are greater than $k$ by 1 and insert the dummy group number $k+1$ immediately to the right of dummy group $k$. 
\end{enumerate}

As is shown in Lemma \ref{lem:dum}, the artificial groups in the set $V$ added in Step 5 do not create any new conflicts, but permit creating conflicts between successively numbered groups by inserting dummy groups in Step 7.  Steps 5 through 8 create conflicts between groups where a conflict exists in $\sigma_A$ for the ZSSS problem, but where one does not (yet) exist for the RSP-sc.  Step 9 in the above procedure accounts for the unlikely case in which a dummy group $k$ was inserted immediately after a real group number $k+1$, in which case Condition (iii) in Table \ref{tab:conflict-conditions} could still be violated after Steps 1--8.

We next provide an example to illustrate the above transformation. Consider the initial sequence 
$$\sigma_A=[2\ 4\ 3\ 6\ 1\ 8\ 7\ 5]$$
in the SSP described by \textcite{KLstacksorting}, whose conflict graph is shown in Figure \ref{KLsequence} (their example assumes $k=2$ stacks; because a 2-coloring is not possible for this graph, at least one shuffle is required using two stacks and a ZSSS is not possible on two stacks). As we transform the group numbers in an instance, we provide the original group numbers on the line below the transformed instance for clarity (where a $d$ corresponds to an artificially inserted dummy group and $v_j$ denotes the $j^{\rm{th}}$ element in $V$). In this instance, two pairs of consecutive digits conflict, i.e., 2 conflicts with 3 and 6 conflicts with 7. Thus, we add 1 to every number from \(3\) to \(8\) and include \(3\) in a set \(V\), and we obtain 

\begin{table}[H]
\centering
\begin{tabular}{rrrrrrrrrr}
Transformed: & [2 & 5& 4& 7& 1& 9& 8& 6] &  V=\{3\} \\
Original: & [2 & 4& 3& 6& 1& 8& 7& 5] &
\end{tabular}
\end{table}

\noindent This process is repeated until no pair of consecutive digits with a conflict remains. In the updated sequence, the pair corresponding to the original digits 6 and 7 now appear as 7 and 8; thus, adding 1 to the numbers 8 and 9 and including 8 in \(V\) yields: 

\begin{table}[h!]
\centering
\begin{tabular}{rrrrrrrrrr}
Transformed: & [2 & 5& 4& 7& 1& 10 & 9& 6] &  V=\{3, 8\} \\
Original: & [2 & 4& 3& 6& 1& 8& 7& 5] &
\end{tabular}
\end{table}

\noindent Next, each element in \(V\) is appended immediately after the last digit in the sequence that is less than or equal to it, resulting in:

\begin{table}[H]
\centering
\begin{tabular}{rrrrrrrrrrr}
Transformed: & [ 2 & 5& 4& 7& 1& 3& 10 & 9& 6 & 8]  \\
Original: & [ 2 & 4& 3& 6& 1& $v_1$ & 8& 7& 5& $v_2$] 
\end{tabular}
\end{table}

\noindent In the above transformed sequence, group 2 should conflict with 5 (because the original sequence contains a conflict between 2 and 4 for ZSSS).  We therefore set \( \lambda = 1 \), increase the value of every digit greater than or equal to 1 by 1, and insert a dummy group $d$ between 3 and 6 (which were previously 2 and 5):

\begin{table}[H]
\centering
\begin{tabular}{rrrrrrrrrrrr}
Transformed: & [ 3 & $d$& 6& 5& 8& 2& 4& 11 & 10& 7 & 9]  \\
Original: & [ 2 & $d$ & 4& 3& 6& 1& $v_1$ & 8& 7& 5& $v_2$] 
\end{tabular}
\end{table}

\noindent Notice that the pair corresponding to the original digits 4 and 3 now appears as 6 and 5 in the transformed sequence. Because we require 4 and 3 to conflict with 6 in the original sequence, this implies that 6 and 5 should conflict with 8 in the above transformed sequence. Thus, we set \( \lambda = 2 \), and increase the value of every digit greater than or equal to 2 by 1. Next, we insert a dummy group $d$ between 6 and 9:

\begin{table}[H]
\centering
\begin{tabular}{rrrrrrrrrrrrr}
Transformed: & [ 4 & $d$& 7& 6& $d$& 9& 3& 5& 12 & 11& 8 & 10]  \\
Original: & [ 2 & $d$ & 4& 3& $d$ & 6& 1& $v_1$ & 8& 7& 5& $v_2$] 
\end{tabular}
\end{table}

\noindent Finally, we number the dummy groups from left to right in increasing order starting at 1, resulting in the sequence

\begin{table}[H]
\centering
\begin{tabular}{rrrrrrrrrrrrr}
Transformed: & [ 4 & 1& 7& 6& 2& 9& 3& 5& 12 & 11& 8 & 10]  \\
Original: & [ 2 & $d$ & 4& 3& $d$ & 6& 1& $v_1$ & 8& 7& 5& $v_2$] 
\end{tabular}
\end{table}

Figure \ref{RSP-sc_conflict_graph_case} displays the corresponding conflict graph under our modified conditions, where a conflict exists if and only if
all conditions in Table \ref{tab:conflict-conditions} hold simultaneously. This example demonstrates how the transformation alters both the node numbers and the associated conflict graph, but results in an equivalent $k$-coloring problem to the one corresponding to the SSP.

\begin{figure}[htbp!]
\begin{center}
\includegraphics[scale=0.5]{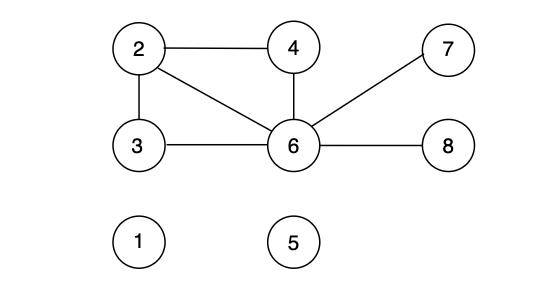}
\caption{Conflict graph for the example [2  4  3  6  1 8  7  5] from \textcite{KLstacksorting}.} \label{KLsequence}
\end{center}
\end{figure}

\begin{figure}[htbp!]
\begin{center}
\includegraphics[scale=0.24]{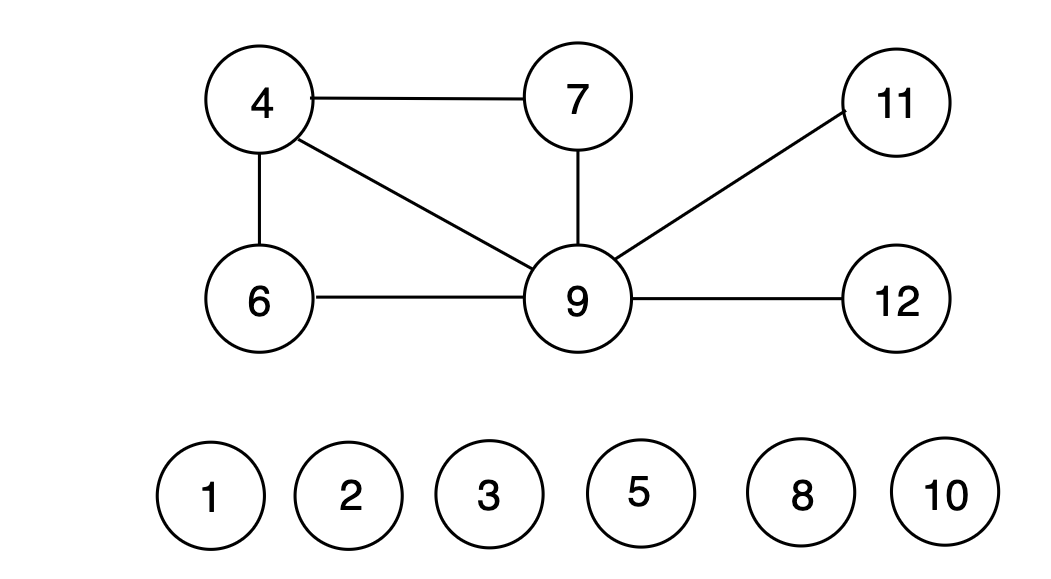}
\caption{Conflict graph for the RSP-sc based on the example from  \textcite{KLstacksorting}.} \label{RSP-sc_conflict_graph_case}
\end{center}
\end{figure}

Observe that the above transformation can be accomplished in polynomial time (requiring \(\mathcal{O}(n^2)\) steps), and no more than $n$ dummy nodes are added. We next show that there is a one-to-one mapping between the conflicts in the RSP-sc and conflicts in the SSP.  For convenience we will refer to the initial sequence in the equivalent SSP as the KL sequence.

\begin{lemma}\label{lem:dum}
Neither the dummy groups nor the artificial groups from the set $V$ conflict with any other groups.
\end{lemma}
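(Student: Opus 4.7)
The plan is to show that every hypothetical conflict involving either an artificial group $v_j\in V$ or a dummy group violates condition (ii) of Table \ref{tab:conflict-conditions}; since a genuine RSP-sc conflict requires all four conditions to hold simultaneously, ruling out condition (ii) alone suffices.

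First I would record two structural invariants produced by the transformation. After Step 4 the sequence is a permutation of $\{1,\dots,n+|V|\}$, since each iteration of Steps 1--3 opens exactly one value gap that is then filled by the corresponding element of $V$ in Step 4; moreover, because $v_j$ is inserted immediately after the last digit whose value is at most $v_j$, every value strictly less than $v_j$ ends up to the left of $v_j$. After Step 8 the sequence is a permutation of $\{1,\dots,n+|V|+\lambda\}$ in which the $\lambda$ dummy groups occupy exactly the smallest $\lambda$ values $1,2,\dots,\lambda$ in left-to-right order, while every non-dummy has value at least $\lambda+1$. This holds because each application of Step 6 increments every current value of at least the current $\lambda$ by one, and Step 8 numbers the dummies left-to-right.

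Using these invariants the conflict analysis becomes a short case split. For $v_j\in V$ acting as the smaller element $a_i$ of a candidate pair, every value less than $v_j$ already lies to the left of $v_j$ and hence to the left of any element to its right, so condition (ii) fails. If $v_j$ instead plays the larger role $a_{i+v}$, condition (i) forces $a_i\le v_j-2$, which means every value less than $a_i$ is at most $v_j-2$ and therefore also lies to the left of $v_j$, again violating condition (ii). For a dummy $d_k$ of value $k$ the left-to-right labeling plays the role the insertion rule played for $v_j$: as the smaller element, the values $\{1,\dots,k-1\}$ are exactly the dummies $d_1,\dots,d_{k-1}$, all lying to $d_k$'s left; as the larger element, condition (i) forces $a_i\le k-2<\lambda+1$, so by the invariant $a_i$ must itself be some dummy $d_m$ with $m\le k-2$, and the values $\{1,\dots,m-1\}$ are again dummies lying to the left of $d_k$. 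In every case condition (ii) is violated. Step 9 only introduces an additional dummy immediately to the right of some $d_k$ together with a renumbering, which preserves both invariants, so the same argument continues to apply.

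The main obstacle is the bookkeeping across the repeated increments in Steps 1--3 and Steps 5--9: one must confirm carefully that the two invariants---the permutation structure on the values and the fact that dummies occupy the smallest values in left-to-right order---survive every increment and every insertion, including the late adjustment in Step 9 (and, in the case of the $V$-elements, regardless of the order in which they are reinserted in Step 4). Once those invariants are in hand, the uniform condition-(ii) argument above dispatches both classes of artificial objects at once.
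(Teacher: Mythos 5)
Your proposal takes essentially the same route as the paper: both arguments dispose of every candidate conflict involving an element of $V$ or a dummy group by showing that Condition (ii) of Table~\ref{tab:conflict-conditions} must fail, using the insertion rule for the $V$-elements and the increasing left-to-right numbering $1,\dots,\lambda$ of the dummies. Your version merely makes explicit the value/position invariants and the smaller-element versus larger-element case split that the paper compresses into a single sentence.
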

\begin{proof}
For those dummy groups from the set $V$, which are inserted in Step 4 to permit creating a conflict between consecutive integers, because each such dummy group is inserted immediately after the last number in the sequence that is less than or equal to its value, Condition (ii) of Table \ref{tab:conflict-conditions} can never hold, either for smaller numbers preceding the dummy group or for higher numbers after it.  For those dummy groups from 1 through $\lambda$ created in Steps 5-9, these are sequenced in increasing order from left to right starting at 1, ensuring that Condition (ii) of Table \ref{tab:conflict-conditions} cannot hold for any of these groups.
\end{proof}

Note that the initial set of conflicts for the initial sequence $\sigma_A$ using the conflict conditions in Table \ref{tab:conflict-conditions} must be a subset of the conflicts defined for the ZSSS problem.  This holds because if Properties (i)-(ii) in Table \ref{tab:conflict-conditions} hold for a pair of groups, then the corresponding pair must have a conflict as defined for the ZSSS problem (i.e., $a_{i}<a_{i+v}$ and not all values smaller than $a_i$ appear before $a_{i+v}$).  

\begin{lemma}\label{lem:121}
A one-to-one correspondence exists between conflicts in the initial sequence $\sigma_A$ for the ZSSS problem and conflicts in the transformed sequence for the RSP-sc.  
\end{lemma}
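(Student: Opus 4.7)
The plan is to use Lemma \ref{lem:dum} to reduce the claim to conflicts among \emph{real} groups (those descended from elements of $\sigma_A$), since neither artificial groups from $V$ nor dummy groups can participate in any RSP-sc conflict. Because the transformation preserves the left-to-right ordering of real groups and modifies their values monotonically, the bijection reduces to verifying, for each ordered pair of real groups, that a ZSSS-conflict on their original values in $\sigma_A$ holds if and only if all four conditions of Table \ref{tab:conflict-conditions} hold on their transformed values and positions.

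For the forward direction (ZSSS conflict $\Rightarrow$ RSP-sc conflict), I would verify each of the four conditions in order. Condition (ii) is the direct analogue of the KL condition: the Step-4 insertions of $V$-elements and the uniform incrementing in Steps 2 and 6 both preserve the property that some real value less than $a_i$ fails to appear before $a_{i+v}$, since all shifts are monotone and each element of $V$ is inserted immediately after the last digit not exceeding it. Condition (i), requiring a gap of at least two, is automatic for pairs already non-consecutive in $\sigma_A$, and is manufactured by Steps 1--3 for originally consecutive conflicting pairs and by Step 6 for pairs that acquire a conflict in the later loop. Conditions (iii)--(iv), the CDS-structural conditions, are enforced by the dummy-insertion-and-renumber operation in Step 6, which systematically destroys any ``benign'' chain-of-increasing-first-element CDS pattern between $a_i$ and $a_{i+v}$.

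For the reverse direction (RSP-sc conflict $\Rightarrow$ ZSSS conflict), I would argue that Condition (ii) of Table \ref{tab:conflict-conditions}, restricted to real groups, is essentially the KL conflict condition on their original values: the set of real values strictly less than a given real value is invariant under the transformation, and $V$-insertions never obscure an omission because each is placed as close as possible to its value. Together with Condition (i) giving strict inequality on values, this reproduces the ZSSS conflict between the original pair in $\sigma_A$, yielding injectivity of the pair mapping.

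The hard part will be the forward direction's handling of Conditions (iii) and (iv), because they involve a delicate interplay between Step 6's renumbering and dummy insertion. The technical crux is showing that inserting a single dummy immediately before $a_{i+v}$, together with incrementing all larger values, suffices to break both subsequences named in (iii) and the augmented subsequence named in (iv), except precisely in the pathological configuration addressed by Step 9, where the freshly numbered dummy happens to neighbor a real group that would extend rather than break a CDS. I expect the main burden of the proof to be a case analysis verifying (a) that Step 9's local patch --- promoting the dummy and inserting one more --- always succeeds without cascading, and (b) that Steps 5--9 terminate producing exactly the required set of RSP-sc conflicts with no spurious ones between real pairs. This will likely be handled by induction on the processing order of pairs $(a_i, a_{i+j})$ in the Step 5--7 loop.
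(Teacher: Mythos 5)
Your plan follows essentially the same route as the paper's proof: both establish the equality of the two conflict sets by mutual inclusion, invoke Lemma \ref{lem:dum} to rule out any conflicts involving dummy or $V$-groups, obtain the forward inclusion from the construction of the transformation itself, and obtain the reverse inclusion by tracking how the monotone value increments preserve the relative order of real groups and the ``not all smaller numbers appear before'' condition. The only difference is one of emphasis rather than of method---the paper dispatches the forward direction with a one-line ``by construction'' and devotes its detail to an explicit increment-counting argument for the reverse direction, whereas you front-load the case analysis of Conditions (iii)--(iv) and the Step 9 patch---so the underlying argument is the same.
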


\begin{proof} As noted earlier, the initial set of conflicts using the sequence $\sigma_A$ and Table \ref{tab:conflict-conditions} is a subset of those as defined for the ZSSS problem.  Let $\sigma_A'$ denote the transformed sequence.  By construction, each inserted dummy group creates a conflict in $\sigma_A'$ for the RSP-sc that corresponds to a conflict in $\sigma_A$ for ZSSS, while Lemma \ref{lem:dum} implies that the inserted dummy group does not conflict with any other groups. As a result, after the transformation, the set of conflicts in $\sigma_A$ for the ZSSS problem is a subset of those in $\sigma_A'$ for the RSP-sc.  To show that the set of conflicts in $\sigma_A'$ for the RSP-sc is also a subset of those in $\sigma_A$ for the ZSSS problem, suppose the transformation from $\sigma_A$ to $\sigma_A'$ creates a conflict between groups in $\sigma_A'$ for the RSP-sc where a conflict does not exist in $\sigma_A$ for ZSSS.  This implies that a pair of groups exists in $\sigma_A'$, $a_i'$ and $a_{i+v}'$, satisfying each of the properties in Table \ref{tab:conflict-conditions}.  In particular, by Conditions (i) and (ii), $a_i' < a_{i+v}'-1$ for some $v>0$, and not all numbers less than $a_i'$ appear before $a_{i+v}'$ in $\sigma_A'$, where $a_i'$ is the group number in $\sigma_A'$ corresponding to $a_i$ in $\sigma_A$. Suppose (for a contradiction) $a_{i+v}<a_i$ in $\sigma_A$; then $a_i'=a_i+n_1+n_2$ and $a_{i+v}'=a_{i+v}'+n_1$, where $n_1 \ge 0$ is the number of times both $a_i$ and $a_{i+v}$ were increased by 1 in the transformation, and $n_2\ge 0$ is the number of times $a_i$ was increased by 1 without $a_{i+v}$.  This gives $a_{i+v}'-a_i'=a_{i+v}-a_i-n_2$, so that $a_{i+v}-a_i>a_{i+v}'-a_i'>1$, a contradiction, implying that $a_{i+v}>a_i$ in $\sigma_A$.  Suppose next that not all numbers less than $a_i'$ appear before $a_{i+v}'$ in $\sigma_A'$.  Then at least one number $a_{i+v+k}'$ follows $a_{i+v}'$ with $k>0$ and $a_{i+v+k}'<a_i'$.  Suppose (again for a contradiction) $a_{i+v+k}>a_i$, which implies $a_{i+v+k}'=a_{i+v+k}+n_1+n_2$ and $a_i'=a_i+n_1$, where $n_1\ge 0$ is the number of times both $a_i$ and $a_{i+v+k}$ were increased by 1 together and $n_2\ge 0$ is the number of times $a_{i+v+k}$ was increased by 1 without $a_i$.  This implies $a_{i+v+k}'-a_{i+v+k}-n_2=a_i'-a_i$ or $a_{i+v+k}'-a_i'-n_2=a_{i+v+k}-a_i$, implying that $a_{i+v+k}'-a_i'\ge a_{i+v+k}-a_i$.  Because $a_{i+v+k}'-a_i'<0$, this implies $a_{i+v+k}-a_i<0$, a contradiction.  Therefore $a_{i+v+k}<a_i$, implying that $a_i$ and $a_{i+v}$ conflict in $\sigma_A$ for ZSSS.  As a result, any conflict in $\sigma_A'$ for the RSP-sc maps to a conflict in $\sigma_A$ for the ZSSS problem, and the set of conflicts in $\sigma_A'$ for the RSP-sc is therefore a subset of the set of conflicts in $\sigma_A$ for the ZSSS problem.  Because each conflict set is a subset of the other, the two conflict sets are identical.
\end{proof}

As a result, we have a polynomial-time transformation from an instance of the ZSSS problem to an equivalent instance of the RSP-sc, implying that a polynomial-time algorithm for the RSP-sc could be used to solve the ZSSS problem (with $m\ge 4$ stacks) in polynomial time, which is not possible unless $\mathcal{P}=\mathcal{NP}$.  This implies the result stated in Theorem \ref{th:NP}.

\subsection{Conflict Graph Properties}\label{sec:AppNPconflict}

Let $\sigma_A=[a_1 \ a_2 \ a_3 \ \cdots \ a_n]$ be an original sequence of all groups on a source track. We say that two groups are in \emph{conflict} if they cannot be placed on the same sorting track without requiring a shuffle move. We now describe the properties that characterize conflicts between groups. We distinguish two cases based on the relative values of \(a_i\) and \(a_{i+v}\).\\

\noindent \textbf{Case I: \(a_i < a_{i+v}-1\).} In this case, the following properties hold:

\begin{property}
\textbf{Same-Time, Current-Order Condition:} \(a_i\) and \(a_{i+v}\) can appear on the same sorting track at the same time in their current order (with \(a_i\) to the left of \(a_{i+v}\)) without any shuffles if and only if the entire subsequence $[a_i\ a_{i+1}\ \cdots\ a_{i+v-1}\ a_{i+v}]$ is moved together in one move, thereby preserving the original ordering.
\end{property}  

\begin{proof}
Suppose that groups \(a_i\) and \(a_{i+v}\) appear on the same sorting track at the same time in the same order as in $\sigma_A$ (with \(a_i\) to the left of \(a_{i+v}\)) without using any shuffles. This implies that the groups between \(a_i\) and \(a_{i+v}\) (i.e., \(a_{i+1}, a_{i+2}, \dots, a_{i+v-1}\)) must also appear on the track between \(a_i\) and \(a_{i+v}\) in the same order as in $\sigma_A$. (If some strict subsets of these groups had been moved in separate moves, the ordering in $\sigma_A$ would have been disrupted due to the FILO requirement, necessitating shuffles to recreate the ordering in $\sigma_A$.) Thus, the entire subsequence $[a_i\ a_{i+1}\ \cdots\ a_{i+v-1}\ a_{i+v}]$ must have been moved together in one move.

Conversely, assume that the entire subsequence $[a_i\ a_{i+1}\ \cdots\ a_{i+v-1}\ a_{i+v}]$ is moved together in one move. Then, by construction, the relative order of these groups is preserved exactly as in the initial sequence. Therefore, \(a_i\) and \(a_{i+v}\) (along with the intermediate groups) appear contiguously in their original order at the same time on the track without shuffles, which completes the proof.
\end{proof}

\begin{property}\label{prop:CDS_subsequence}
\textbf{CDS Structure Requirement:} If the subsequence $[a_i\ a_{i+1}\ \cdots\ a_{i+v}]$ does not consist of a sequence of CDSs (possibly of length 1) such that the first digit in each CDS is increasing, then \(a_i\) and \(a_{i+v}\) cannot appear on the same track at the same time in their current order without requiring a shuffle. Conversely, if the subsequence $[a_i\ a_{i+1}\ \cdots\ a_{i+v}]$ consists of a sequence of CDSs (possibly of length 1) such that the first digit in each CDS is increasing, then no conflict exists between $a_i$ and $a_{i+v}$.
\end{property} 

\begin{proof}
If the subsequence is not a sequence of CDSs such that the first digit in each CDS is increasing, then this implies that a subsequence exists such that a higher number $a_j$ appears before a lower number $a_{j+v}$ ($v>0$) that is not part of a CDS containing both numbers. For the sequence to maintain its current order, the entire subsequence must move to a sorting track in its current order. Because $a_j>a_{j+v}$ and the sequence $[a_j\ a_{j+1}\ \cdots\ a_{j+v}]$ is not a CDS, some subset of $[a_j\ a_{j+1}\ \cdots\ a_{j+v-1}]$ must be moved before $a_{j+v}$ can move to the sink.  Because $a_{j+v}$ must be at the sink before the subsequence $[a_j\ a_{j+1}\ \cdots\ a_{j+v-1}]$, this subsequence must move to another sorting stack before $a_j$ can be moved to the sink, indicating that at least one shuffle move is needed.  

If the subsequence $[a_i\ a_{i+1}\ \cdots\ a_{i+v}]$ consists of a sequence of CDSs (possibly of length 1) such that the first digit in each CDS is increasing, then if the entire subsequence is moved to a single track in one move, each individual CDS can subsequently move to the sink (after all digits smaller than the last group number in the CDS are in the sink) prior to the following CDS in the subsequence without requiring any shuffles.  
\end{proof}

\begin{property}
\textbf{Different-Time Condition:} \(a_i\) and \(a_{i+v}\) can be placed on the same track at different times only if all digits less than \(a_i\) appear in the subsequence $ [a_1\ a_2\ \cdots\ a_{i+v-1}]$.
\end{property} 

\begin{proof}
The only way $a_i$ and $a_{i+v}$ can appear on the same track at different times is if $a_i$ is moved to the track before $a_{i+v}$ and subsequently moved off the track before $a_{i+v}$ enters. This can only occur without a shuffle if all digits less than \(a_i\) appear in the subsequence $[a_1\ a_2\ \cdots\ a_{i+v-1}]$, which implies that a solution exists that moves all digits less than or equal to $a_i$ to the sink before moving \(a_{i+v}\) to a sorting track.
\end{proof}

\begin{property}\label{prop:consec_CDS}
\textbf{Same-Time, Reverse-Order Condition:} \(a_i\) and \(a_{i+v}\) can appear on the same track at the same time in reverse order (with \(a_{i+v}\) to the left of \(a_i\)) without shuffles if and only if a single CDS from \( a_{i+v} \) through $a_i$ can be created on a sorting track. This can only occur if the digits \(a_{i}+1,a_{i}+2, …, a_{i+v}-1, a_{i+v} \)
    appear in the initial ordering as a sequence of CDSs (possibly of length 1) such that the first digit in each CDS is increasing. 
\end{property}

\begin{proof}
Assume that \(a_i\) and \(a_{i+v}\) appear on the same sorting track at the same time in reverse order (with \(a_{i+v}\) to the left of \(a_i\)) without requiring any shuffles. Under this assumption, consider the subsequence from \(a_{i+v}\) through \(a_i\). There are two possibilities: either this subsequence forms a CDS or it does not. If it does not form a CDS, then since \(a_i < a_{i+v}\), \(a_i\) must be moved to the sink before \(a_{i+v}\). However, having \(a_{i+v}\) to the left of \(a_i\) blocks \(a_{i+v}\) from moving to the sink, which implies that a shuffle is necessary—a contradiction. Therefore, in order for \(a_i\) and \(a_{i+v}\) to be placed on the same track in reverse order without a shuffle, the subsequence \(a_{i+v}\) through \(a_i\) must indeed form a single CDS. This, in turn, requires that the digits \(a_i+1, a_i+2, \dots, a_{i+v}-1, a_{i+v}\) appear in $\sigma_A$ as a sequence of CDSs such that the first digit in each CDS is increasing.

Conversely, assume that the digits \(a_i+1, a_i+2, \dots, a_{i+v}-1, a_{i+v}\) appear in $\sigma_A$ as a sequence of CDSs (possibly of length 1) such that the first digit in each CDS is increasing. Under this assumption, the corresponding sequence of CDSs permits successively placing each such CDS to the left of the previous one in $\sigma_A$ on a sorting track, allowing for the creation of a single CDS  from \(a_{i+v}\) through \(a_i\) to be formed without any shuffles.
\end{proof}

\noindent \textbf{Case II: \(a_i > a_{i+v}\) and not all digits smaller than \(a_i\) appear before \(a_i\).}  
If $a_i>a_{i+v}$, then clearly $a_i$ and $a_{i+v}$ can use the same track without a shuffle if $a_i$ is moved to the track before $a_{i+v}$ (or if $[a_i\ \cdots\ a_{i+v}]$ is part of a single move to the track and where $[a_i\ \cdots\ a_{i+v}]$ is a subsequence of a single CDS). Thus, when \(a_i > a_{i+v}\), no conflict occurs between the two groups.


In summary, the above properties establish the following conditions under which a conflict occurs between groups in the RSP-sc.

\begin{proposition} \label{proposition:P1}
Consider an initial sequence \(\sigma_A=[a_1 \ a_2 \ a_3 \ \cdots \ a_n]\). Suppose that for some \(v>1\) we have \(a_{i+v}-1 > a_i\), and: 
\begin{enumerate}[i)]
    \item Not all digits less than \(a_i\) occur among the sequence \([a_1 \ a_2 \ \cdots \ a_{i+v-1}]\);
    \item The subsequence $[a_i\ a_{i+1}\ \cdots\ a_{i+v}]$ in \(\sigma_A\) does not consist of a sequence of CDSs (possibly of length 1) such that the first digit in each CDS is increasing; and
    \item The subsequence $[a_i\ a_i+1\ \cdots\ a_{i+v}-1\ a_{i+v}]$ of \(\sigma_A\) is not a sequence of CDSs (possibly of length 1) such that the first digit in each CDS is increasing.
\end{enumerate}
Then \(a_i\) and \(a_{i+v}\) cannot be assigned to the same stack without requiring at least one shuffle.
\end{proposition}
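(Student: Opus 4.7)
The plan is to establish the proposition by contrapositive: assume that $a_i$ and $a_{i+v}$ can share a sorting track without any shuffle and derive that at least one of conditions (i)--(iii) must fail. Since any two groups that share a stack must do so in exactly one of three configurations --- both present simultaneously in their current left-to-right order, both present simultaneously in reverse order, or present at disjoint times --- my approach is to rule out each configuration in turn using the four properties already proved in Case~I of the preceding analysis, which applies because the hypothesis $a_{i+v}-1 > a_i$ places us squarely in Case~I.

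First, I would address the same-time, current-order scenario. By the Same-Time, Current-Order Condition, such coexistence forces the entire subsequence $[a_i\ a_{i+1}\ \cdots\ a_{i+v}]$ to be relocated in a single shunting move, thereby preserving its original ordering on the destination track. The CDS Structure Requirement then asserts that this subsequence is compatible with a shuffle-free completion only if it decomposes into a sequence of CDSs whose leading digits are strictly increasing. This directly violates condition (ii) of the proposition, so this configuration is impossible under the stated hypotheses.

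Next I would handle the remaining two configurations. For the different-time scenario, the Different-Time Condition requires that every digit smaller than $a_i$ already appears in $[a_1\ a_2\ \cdots\ a_{i+v-1}]$, so that $a_i$ can be cleared to the sink before $a_{i+v}$ ever enters the shared stack; this directly contradicts condition (i). For the same-time, reverse-order scenario, the Same-Time, Reverse-Order Condition forces the digits $a_i+1, a_i+2, \ldots, a_{i+v}$ to be laid out in $\sigma_A$ as a sequence of CDSs with strictly increasing first digits; prepending $a_i$ itself then exhibits the subsequence $[a_i\ a_i+1\ \cdots\ a_{i+v}-1\ a_{i+v}]$ as a sequence of CDSs of exactly the type ruled out by condition (iii).

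The step I expect to require the most care is the completeness of this three-way case split: I need to verify that no other shuffle-free arrangement is possible, in particular that two groups cannot ``interleave'' in any richer way under the LIFO discipline extended to batch moves. A secondary delicate point is confirming that conditions (ii) and (iii) correspond precisely to the two distinct same-time configurations rather than being redundant --- the distinction being that (ii) governs the literal contiguous subsequence in $\sigma_A$ between positions $i$ and $i+v$, whereas (iii) governs how the integer values from $a_i$ through $a_{i+v}$ are arranged within $\sigma_A$.
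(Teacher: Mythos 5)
Your proposal is correct and matches the paper's intended argument: the paper gives no separate proof of this proposition but states it "in summary" as the direct consequence of Properties 1--4 of Case I, which is precisely the three-configuration case split (same-time current-order, same-time reverse-order, different-time) that you carry out. The subtleties you flag — exhaustiveness of the three configurations and the non-redundancy of conditions (ii) and (iii) — are likewise left implicit in the paper, so your treatment is at the same level of rigor as the original.
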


We define groups \( a_i \) and \( a_{i+v} \) (for \( v > 1 \)) as being in conflict if they satisfy Proposition \ref{proposition:P1}. The corresponding conflict graph for an initial sequence \(\sigma_A\) then contains a vertex corresponding to each group and an edge between any two vertices such that the corresponding groups are in conflict. 

\begin{proposition} \label{proposition:P2}
Consider an initial sequence \(\sigma_A=[a_1 \ a_2 \ \cdots \ a_n]\). For any \(a_i < n\) ($a_i>1$), \(a_i\) can never be in conflict with \(a_i+1\)  ($a_i-1$).
\end{proposition}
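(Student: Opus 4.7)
The plan is to invoke Proposition \ref{proposition:P1} directly, together with the Case II observation preceding it, and show that in every arrangement of two groups whose values differ by exactly one, at least one of the four conditions in Table \ref{tab:conflict-conditions} fails. The main work is just bookkeeping about which group sits earlier in $\sigma_A$, so I do not expect any genuine obstacle; the content of the proposition is essentially built into the definition of a conflict, and the goal is to make this explicit.

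First I would fix $a_i$ with $a_i < n$ and let $a_j = a_i+1$ denote the (unique) group with value one greater, where $j \ne i$. I would split into two position cases. If $j > i$, set $v = j-i$; then $a_{i+v}-1 = a_i$, so condition (i) of Table \ref{tab:conflict-conditions}, namely $a_i < a_{i+v}-1$, fails, and by Proposition \ref{proposition:P1} no conflict exists. (Note the case $v=1$ is subsumed, since condition (i) already restricts $v>1$; intuitively, adjacent groups in $\sigma_A$ can always be moved together in a single batch move, which is consistent with the claim.) If instead $j < i$, then relabel the pair by its positional order: set $i' = j$ and $v = i - j > 0$, so that $a_{i'} = a_i+1 > a_i = a_{i'+v}$. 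This is exactly Case II in Appendix \ref{sec:AppNPconflict}, where it was established that whenever the earlier group carries the larger value, the two groups can share a sorting stack by moving the larger one first (or by moving the pair as part of a single CDS batch), so no conflict occurs.

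The argument for $a_i > 1$ and $a_i - 1$ is entirely symmetric: let $a_j = a_i - 1$ with $j \ne i$. If $j > i$, set $v = j - i$, giving $a_{i+v} = a_i - 1 < a_i$, which again puts us in Case II and yields no conflict. If $j < i$, set $i' = j$ and $v = i-j$; then $a_{i'+v} - 1 = a_i - 1 = a_{i'}$, so condition (i) fails and Proposition \ref{proposition:P1} precludes a conflict. In all four subcases, the pair $\{a_i, a_i\pm 1\}$ fails to satisfy the defining conditions of a conflict, which proves the proposition. The only subtle point to flag is that the proof is really a case analysis on the relative position of the two groups, handled by invoking Proposition \ref{proposition:P1} when the smaller value comes first and invoking the Case II remark when the larger value comes first.
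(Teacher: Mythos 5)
Your proof is correct and uses the same two-way split on the relative positions of the two groups that the paper uses, and the subcase in which the larger value appears earlier in $\sigma_A$ is handled identically in both proofs via the Case~II observation. The difference lies in the other subcase (smaller value first): you kill it by noting that condition~(i) of Table~\ref{tab:conflict-conditions}, $a_i < a_{i+v}-1$, can never hold when the two values differ by exactly one, whereas the paper argues operationally that the pair forms a sequence of two length-one CDSs with increasing first digits and therefore, by Property~\ref{prop:consec_CDS}, can be placed on the same sorting track in reverse order without a shuffle. Your route is shorter and is fully valid under the paper's formal definition of a conflict as ``satisfying the conditions of Proposition~\ref{proposition:P1}.'' What the paper's version buys is self-containedness: the remark in Appendix~\ref{sec:AppNPtrans} explains that the ``$-1$'' was inserted into condition~(i) \emph{because} value-adjacent groups never conflict, so deriving the proposition from condition~(i) is mildly circular relative to that narrative --- it is the operational argument via Property~\ref{prop:consec_CDS} that certifies the underlying no-shuffle fact and thereby licenses the weakened form of condition~(i) in the first place. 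If you want your version to carry the same semantic content, you should add one sentence (as you implicitly do in your parenthetical about batch moves) confirming that the value-adjacent pair can actually coexist on a track, rather than only that the formal conflict predicate is false.
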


\begin{proof}
If $a_i$ appears in $\sigma_A$ after (before) $a_{i}+1$ ($a_i-1)$, then no conflict is created by moving $a_i+1$ ($a_i$) to a track and later moving $a_i$ ($a_i-1$) to the left of $a_i+1$ ($a_i$) on the same track. In either case, the smaller digit can move to the sink before the larger one without requiring a shuffle.  If $a_i$ appears in $\sigma_A$ before (after) $a_i+1$ ($a_i-1$), then the two groups form a sequence of successive CDSs (each of length one) from $a_i$ ($a_i-1$) to $a_i+1$ ($a_i$) and therefore satisfy the conditions of Property \ref{prop:consec_CDS}.  
\end{proof}

\begin{proposition} \label{proposition:P3}
Consider an initial sequence \(\sigma_A=[a_1 \ a_2 \ \cdots \ a_n]\). For every index \(i\) with \(1 \leq i < n\) ($1< i \le n$), \(a_i\) can never conflict with \(a_{i+1}\) ($a_{i-1}$).
\end{proposition}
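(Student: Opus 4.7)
The plan is to invoke the definition of a conflict in the RSP-sc as codified in Proposition~\ref{proposition:P1} (equivalently, Table~\ref{tab:conflict-conditions}) and observe that it explicitly requires the index gap $v$ to satisfy $v>1$. Since the element $a_{i+1}$ corresponds exactly to $v=1$, condition~(i) of Table~\ref{tab:conflict-conditions} is vacuously false, so the conjunction of the four conflict conditions cannot hold for the pair $(a_i,a_{i+1})$. The backward direction, that $a_i$ cannot conflict with $a_{i-1}$ for $1<i\le n$, follows from the forward direction by relabeling the index.

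To back this syntactic argument with an operational one, I would carry out a short case analysis on the relative values of $a_i$ and $a_{i+1}$. If $a_i>a_{i+1}$, the Case~II discussion preceding Proposition~\ref{proposition:P1} immediately gives no conflict, since the smaller group $a_{i+1}$ (sitting above $a_i$ in the source track) can simply be moved aside to a sorting stack so that $a_i$ is free to reach the sink when required. If $a_i=a_{i+1}-1$, Proposition~\ref{proposition:P2} applies directly. If $a_i<a_{i+1}-1$, then the two-term subsequence $[a_i,a_{i+1}]$ trivially decomposes into two CDSs of length one whose first digits, $a_i$ and $a_{i+1}$, are strictly increasing; Property~\ref{prop:CDS_subsequence} then shows that condition~(iii) of Table~\ref{tab:conflict-conditions} fails, and so the subsequence can be placed together on a sorting track without any shuffle.

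The main obstacle is almost vacuous here: the proposition really is a boundary observation on the conflict definition, and the work amounts to making sure that the three subcases above exhaust the possibilities and to lining up the inequality signs with those in Table~\ref{tab:conflict-conditions}. The value of Proposition~\ref{proposition:P3} is not combinatorial depth but rather the structural role it plays in the transformation in Appendix~\ref{sec:AppNPtrans}: together with Proposition~\ref{proposition:P2}, it guarantees that sequence-adjacent groups never need to be artificially separated to recreate a ZSSS conflict in the RSP-sc instance.
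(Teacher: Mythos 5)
Your proof is correct and takes essentially the same route as the paper's, which likewise splits on whether $a_i > a_{i+1}$ (handled by the Case~II observation that the larger group can reach a sorting track before the smaller) or $a_i < a_{i+1}$ (the two-element subsequence is a sequence of length-one CDSs with increasing first digits, so Property~\ref{prop:CDS_subsequence} rules out a conflict), with the $a_{i-1}$ claim following by symmetry. One minor slip: since $a_1$ is the first group accessible on the source track, $a_i$ sits \emph{above} $a_{i+1}$ rather than below it, but this does not affect the argument because the Case~II reasoning you cite is what actually carries the conclusion.
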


\begin{proof} 
If $a_i>a_{i+1}$ ($a_i<a_{i-1}$), the larger number can move to a track before the smaller, in which case no conflict exists. Otherwise, the sequence of two groups forms a sequence of CDSs of length one, and by Property \ref{prop:CDS_subsequence}, no conflict exists between the two groups.
\end{proof}

\begin{proposition} \label{proposition:P4}
Consider an initial sequence \(\sigma_A=[a_1 \ a_2 \ \cdots \ a_n]\) with \(n \le 4\) and $k=1$ sorting track. In this case, no pair of groups \(a_i\) and \(a_{i+v}\) (for \( v > 1 \)) can be in conflict.
\end{proposition}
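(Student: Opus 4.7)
My plan is to establish the claim by exhaustive case analysis on triples $(n, i, v)$ satisfying $n \le 4$, $v > 1$, and $i + v \le n$, showing that in every case at least one of the four conflict conditions of Proposition \ref{proposition:P1} must fail for every permutation of $\{1, \ldots, n\}$.

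First I would enumerate the feasible triples. The constraints $v \ge 2$, $i \ge 1$, and $i + v \le n \le 4$ leave exactly four possibilities: $(n, i, v) \in \{(3,1,2),\, (4,1,2),\, (4,1,3),\, (4,2,2)\}$.

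Next, I would apply a counting argument based on condition (ii) to dispose of three of these four cases. If $a_i = 1$, then condition (ii) fails vacuously, since no digits are smaller than $a_i$ and hence every digit less than $a_i$ is (vacuously) present in $[a_1, \ldots, a_{i+v-1}]$. So we may assume $a_i \ge 2$. Then (ii) requires at least one digit from $\{1, \ldots, a_i - 1\}$ to appear in a position strictly after $i+v$, which forces $n - i - v \ge 1$. This inequality is violated for $(3, 1, 2)$, $(4, 1, 3)$, and $(4, 2, 2)$, so no conflict can arise in any of these three triples.

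The remaining case $(n, i, v) = (4, 1, 2)$ requires a bit more care. Combining $a_1 \ge 2$ with the constraint $a_3 \ge a_1 + 2$ from condition (i) and the digit bound $a_3 \le 4$ forces $a_1 = 2$ and $a_3 = 4$. Condition (ii) then requires the digit $1$ to appear in position $4$, which pins down $a_2 = 3$. Hence the only candidate sequence is $\sigma_A = [2, 3, 4, 1]$. For this sequence the subsequence $[a_1, a_2, a_3] = [2, 3, 4]$ decomposes as three singleton CDSs whose first digits $2 < 3 < 4$ are strictly increasing, so condition (iii) fails and no conflict occurs. The main obstacle is isolating this unique surviving candidate through the counting and digit-budget arguments; once it is singled out, the CDS structure of $[2,3,4]$ finishes the argument immediately.
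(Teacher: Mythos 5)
Your proof is correct, and it takes a genuinely different route from the paper's. The paper disposes of this proposition in one line by brute force: it asserts that each of the at most $4!$ permutations admits a zero-shuffle solution on a single sorting track, from which the absence of conflicts follows operationally. You instead argue directly from the formal definition of conflict (the conditions of Proposition \ref{proposition:P1} / Table \ref{tab:conflict-conditions}): you enumerate the four feasible index patterns $(n,i,v)$, kill three of them with a clean pigeonhole observation — condition (ii) forces a digit smaller than $a_i$ to sit strictly after position $i+v$ (it cannot sit at position $i+v$ itself since $a_{i+v}>a_i$), hence $n-i-v\ge 1$ — and then use the digit budget $a_1\ge 2$, $a_3\ge a_1+2$, $a_3\le 4$ to isolate the single surviving candidate $[2\,3\,4\,1]$, which fails the CDS condition. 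Your argument is arguably the more rigorous of the two: it proves non-conflict with respect to the paper's stated definition rather than via the operational surrogate, it avoids checking 24 cases, and it makes explicit why the claim holds. What the paper's approach buys is brevity and independence from the precise form of the conflict conditions. One tiny point worth stating explicitly in your write-up is the observation that the missing small digit cannot occupy position $i+v$ itself (because $a_{i+v}>a_i$ by condition (i)); without it the inequality would only be $n-i-v\ge 0$, which all four triples satisfy, and the counting step would not eliminate anything.
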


\begin{proof}
For each of the no more than $4!$ possible sequences, it is straightforward to demonstrate a zero-shuffle solution using a single sorting track.
\end{proof}

\section{Detailed Computational Test Results}
\label{sec:Computational_Details}

Table \ref{tab:30_mixed_results} presents detailed results on the computational performance of the MIP model and the ARG-DP algorithm across the 30 mixed problem instances (IDs 1-30). For the MIP model, the table includes the objective function value (``Obj.''), running time (``Time (s)'') measured in seconds, and time ratio. For the ARG-DP algorithm, it reports the objective function value (``Obj.''), running time (``Time (s)'') in seconds, and optimality gap expressed as a percentage (``Opt.\ Gap (\%)'').

\begin{table}[htbp]
    \centering 
    \footnotesize
    \caption{Computational results for the 30 mixed instances}
    \label{tab:30_mixed_results}
    \begin{tabular}{c*{6}{c}}
        \toprule
        & \multicolumn{3}{c}{MIP} & \multicolumn{3}{c}{ARG-DP} \\
        \cmidrule(lr){2-4} \cmidrule(lr){5-7}
        Test Instance ID & Obj. & Time (s) & Time Ratio & Obj. & Time (s) & Opt.\ Gap (\%) \\
        \midrule
         1  & 5.00 & 0.74   & 37.00  & 5.00 & 0.02  & 0.00 \\
         2  & 5.00 & 0.88   & 44.00  & 5.00 & 0.02  & 0.00 \\
         3  & 6.00 & 4.91   & 491.00 & 7.00 & 0.01  & 16.67 \\
         4  & 6.00 & 24.01  & 343.00 & 7.00 & 0.07  & 16.67 \\
         5  & 6.00 & 8.83   & 441.50 & 8.00 & 0.02  & 33.33 \\
         6  & 7.00 & 80.64  & 504.00 & 8.00 & 0.16  & 14.29 \\
         7  & 7.00 & 52.46  & 201.77 & 9.00 & 0.26  & 28.57 \\
         8  & 8.00 & 143.84 & 3.43   & 8.00 & 41.93 & 0.00 \\
         9  & 8.00 & 442.95 & 70.87  & 9.00 & 6.25  & 12.50 \\
        10  & 9.00 & 4774.88& 425.57 & 9.00 & 11.22 & 0.00 \\
        11  & 9.00 & 3881.51& 194.37 & 11.00& 19.97 & 22.22 \\
        12  & 5.00 & 1.20   & 40.00  & 5.00 & 0.03  & 0.00 \\
        13  & 6.00 & 9.51   & 3.35   & 6.00 & 2.84  & 0.00 \\
        14  & 7.00 & 24.49  & 8.97   & 7.00 & 2.73  & 0.00 \\
        15  & 7.00 & 16.91  & 1.79   & 8.00 & 9.43  & 14.29 \\
        16  & 7.00 & 90.42  & 291.68 & 8.00 & 0.31  & 14.29 \\
        17  & 7.00 & 291.20 & 12.87  & 9.00 & 22.62 & 28.57 \\
        18  & 8.00 & 224.54 & 42.77  & 8.00 & 5.25  & 0.00 \\
        19  & 6.00 & 49.68  & 9.48   & 6.00 & 5.24  & 0.00 \\
        20  & 8.00 & 205.29 & 662.23 & 8.00 & 0.31  & 0.00 \\
        21  & 8.00 & 1423.52& 241.68 & 8.00 & 5.89  & 0.00 \\
        22  & 9.00 & 2396.78& 23.04  & 9.00 & 104.01& 0.00 \\
        23  & 6.00 & 6.81   & 136.20 & 7.00 & 0.05  & 16.67 \\
        24  & 6.00 & 9.16   & 458.00 & 6.00 & 0.02  & 0.00 \\
        25  & 4.00 & 0.39   & 2.17   & 4.00 & 0.18  & 0.00 \\
        26  & 6.00 & 8.39   & 41.95  & 7.00 & 0.20  & 16.67 \\
        27  & 8.00 & 31.41  & 0.15   & 8.00 & 203.26& 0.00 \\
        28  & 8.00 & 24.09  & 52.37  & 9.00 & 0.46  & 12.50 \\
        29  & 9.00 & 221.53 & 15.46  & 9.00 & 14.33 & 0.00 \\
        30  & 8.00 & 124.05 & 2.83   & 8.00 & 43.85 & 0.00 \\
        \midrule
        \textbf{Overall Avg.} & 6.97 & 485.83 & 160.12 & 7.53 & 16.70 & 8.24 \\
        \bottomrule
    \end{tabular}
\end{table}

Similarly, Table \ref{tab:30_non_mixed_results} presents detailed results on the computational performance of the MIP model and the ARG-DP algorithm across the 30 non-mixed problem instances (IDs 31-60).

\begin{table}[htbp]
    \centering 
    \footnotesize
    \caption{Computational results for the 30 non-mixed instances}
    \label{tab:30_non_mixed_results}
    \begin{tabular}{c*{6}{c}}
        \toprule
        & \multicolumn{3}{c}{MIP} & \multicolumn{3}{c}{ARG-DP} \\
        \cmidrule(lr){2-4} \cmidrule(lr){5-7}
        Test Instance ID & Obj. & Time (s) & Time Ratio & Obj. & Time (s) & Opt.\ Gap (\%) \\
        \midrule
         31 & 5.00  & 1.61   & 53.67   & 5.00  & 0.03   & 0.00 \\
         32 & 4.00  & 0.43   & 107.50  & 4.00  & 0.004   & 0.00 \\
         33 & 6.00  & 3.48   & 43.50   & 6.00  & 0.08   & 0.00 \\
         34 & 4.00  & 0.05   & 25.00   & 4.00  & 0.002   & 0.00 \\
         35 & 4.00  & 0.33   & 66.00   & 4.00  & 0.005   & 0.00 \\
         36 & 5.00  & 0.08   & 8.00    & 5.00  & 0.01   & 0.00 \\
         37 & 7.00  & 10.17  & 113.00  & 7.00  & 0.09   & 0.00 \\
         38 & 8.00  & 507.05 & 272.61  & 9.00  & 1.86   & 12.50 \\
         39 & 6.00  & 3.36   & 336.00  & 7.00  & 0.01   & 16.67 \\
         40 & 9.00  & 50.04  & 5004.00 & 10.00 & 0.01   & 11.11 \\
         41 & 11.00 & 992.10 & 2917.94 & 12.00 & 0.34   & 9.09 \\
         42 & 10.00 & 61.20  & 6120.00 & 10.00 & 0.01   & 0.00 \\
         43 & 7.00  & 186.37 & 503.70  & 9.00  & 0.37   & 28.57 \\
         44 & 6.00  & 9.09   & 303.00  & 6.00  & 0.03   & 0.00 \\
         45 & 5.00  & 0.23   & 11.50   & 5.00  & 0.02   & 0.00 \\
         46 & 9.00  & 518.44 & 1.83    & 9.00  & 283.26 & 0.00 \\
         47 & 11.00 & 807.93 & 122.97  & 12.00 & 6.57   & 9.09 \\
         48 & 5.00  & 0.33   & 11.00   & 5.00  & 0.03   & 0.00 \\
         49 & 6.00  & 1.54   & 38.50   & 7.00  & 0.04   & 16.67 \\
         50 & 6.00  & 3.80   & 7.76    & 6.00  & 0.49   & 0.00 \\
         51 & 10.00 & 757.14 & 11.99   & 11.00 & 63.15  & 10.00 \\
         52 & 5.00  & 0.76   & 5.85    & 5.00  & 0.13   & 0.00 \\
         53 & 7.00  & 120.60 & 4.04    & 7.00  & 29.82  & 0.00 \\
         54 & 8.00  & 6.75   & 168.75  & 8.00  & 0.04   & 0.00 \\
         55 & 6.00  & 2.34   & 78.00   & 7.00  & 0.03   & 16.67 \\
         56 & 8.00  & 29.48  & 0.06    & 8.00  & 472.21 & 0.00 \\
         57 & 12.00 & 1130.67& 25.35   & 12.00 & 44.61  & 0.00 \\
         58 & 3.00  & 0.03   & 10.00   & 3.00  & 0.003   & 0.00 \\
         59 & 10.00 & 132.54 & 111.38  & 11.00 & 1.19   & 10.00 \\
         60 & 9.00  & 485.61 & 16.44   & 10.00 & 29.54  & 11.11 \\
        \midrule
        \textbf{Overall Avg.} & 7.07  & 194.12 & 549.98  & 7.47  & 31.13  & 5.05 \\
        \bottomrule
    \end{tabular}
\end{table}

Tables \ref{tab:5_mixed_results} and \ref{tab:5_non_mixed_results}  present detailed results on the computational performance of the MIP model and the ARG-DP algorithm for the five Gaia-mixed cases (IDs 61-65) and the five Gaia-non-mixed cases (IDs 66-70).

\begin{table}[htbp]
    \centering 
    \footnotesize
    \caption{Computational results for the five Gaia-mixed cases}
    \label{tab:5_mixed_results}
    \begin{tabular}{c*{6}{c}}
        \toprule
        & \multicolumn{3}{c}{MIP} & \multicolumn{3}{c}{ARG-DP} \\
        \cmidrule(lr){2-4} \cmidrule(lr){5-7}
        Test Case ID & Obj. & Time (s) & Time Ratio & Obj. & Time (s) & Opt.\ Gap (\%) \\
        \midrule
         61 & 13.00 & 154.53  & 1.20   & 14.00 & 128.92 & 7.69  \\
         62 &  9.00 & 3.92    & 15.08  &  9.00 & 0.26   & 0.00  \\
         63 &  8.00 & 20.88   & 1044.00&  8.00 & 0.02   & 0.00  \\
         64 & 12.00 & 6477.66 & 239.91 & 14.00 & 27.00  & 16.67 \\
         65 &  8.00 & 53.37   & 102.63 &  8.00 & 0.52   & 0.00  \\
        \midrule
        \textbf{Overall Avg.} & 10.00 & 1342.07 & 280.56 & 10.60 & 31.34 & 4.87 \\
        \bottomrule
    \end{tabular}
\end{table}

\begin{table}[htbp]
    \centering 
    \footnotesize
    \caption{Computational results for the five Gaia-non-mixed cases}
    \label{tab:5_non_mixed_results}
    \begin{tabular}{c*{6}{c}}
        \toprule
        & \multicolumn{3}{c}{MIP} & \multicolumn{3}{c}{ARG-DP} \\
        \cmidrule(lr){2-4} \cmidrule(lr){5-7}
        Test Case ID & Obj. & Time (s) & Time Ratio & Obj. & Time (s) & Opt.\ Gap (\%) \\
        \midrule
         66 & 11.00 & 14181.43 & 768.64  & 11.00 & 18.45  & 0.00 \\
         67 & 11.00 & 0.75     & 37.50   & 11.00 & 0.02   & 0.00 \\
         68 & 16.00 & 17653.62 & 19.85   & 20.00 & 889.20 & 25.00 \\
         69 & 14.00 & 50932.42 & 42.16   & 15.00 & 1208.01& 7.14 \\
         70 & 8.00  & 5.16     & 19.11   & 9.00  & 0.27   & 12.50 \\
        \midrule
        \textbf{Overall Avg.} & 12.00 & 16554.68 & 177.45 & 13.20 & 423.19 & 8.93 \\
        \bottomrule
    \end{tabular}
\end{table}

\end{appendices}

\end{document}